\documentclass[11pt]{article}

\usepackage[utf8]{inputenc}
\usepackage[T1]{fontenc}

\usepackage{microtype} 

\usepackage{amsmath, amsfonts, amssymb, amsthm}
\usepackage{mathtools} 
\usepackage{bm}

\usepackage{booktabs} 
\usepackage{enumitem}
\usepackage{thmtools}  

\usepackage[
    style=alphabetic,
    backend=biber,
    maxbibnames=99,maxalphanames=3,
minalphanames=3
]{biblatex}
\AtBeginBibliography{\small\setlength{\emergencystretch}{1em}}
\usepackage[colorlinks=true, allcolors=blue]{hyperref}

\theoremstyle{definition} 
\newtheorem{theorem}{Theorem}[section]
\newtheorem{lemma}[theorem]{Lemma}
\newtheorem{proposition}[theorem]{Proposition}
\newtheorem{corollary}[theorem]{Corollary}

\newtheorem{definition}[theorem]{Definition}

\theoremstyle{remark}
\newtheorem{remark}[theorem]{Remark}
\newtheorem{example}[theorem]{Example}

\newlist{romannum}{enumerate}{1}
\setlist[romannum]{
    label=\textup{(\roman*)},
    ref=\textup{(\roman*)},
    leftmargin=*,
    topsep=5pt,
    itemsep=2pt
}

\usepackage{xcolor}

\newcommand{\Zgeq}{\mathbb{Z}_{\geq 0}}       
\newcommand{\Zgeqn}{\mathbb{Z}_{\geq 0}^n}     
\newcommand{\dnr}{\Delta_n^r}
\newcommand{\delplus}{\delta_J^+}
\newcommand{\delminus}{\delta_J^-}
\newcommand{\ones}{\textbf{1}}
\newcommand{\R}{\mathbb{R}}
\DeclareMathOperator{\supp}{supp}

\newcommand{\hess}{\mathcal{H}}
\newcommand{\Lnr}{L_n^r}

\newcommand{\PMat}{\mathbf{PMat}}      
\newcommand{\MConv}{\mathbf{MConv}}    
\newcommand{\PMatK}{\mathbf{PMat}_\kappa}
\newcommand{\MConvK}{\mathbf{MConv}_\kappa}

\DeclareMathOperator{\proj}{proj}              
\newcommand{\kproj}{\proj_{\kappa}}            

\DeclareMathOperator{\Ret}{Ret}                
\newcommand{\RetK}{\Ret_{\kappa}}              

\newcommand{\Jmax}{J^{\max_{\kappa}}} 

\hypersetup{
    pdftitle={Caged Retractions of Polymatroids},
    pdfauthor={Ari Pomeranz}
}

\usepackage[margin=1.5in]{geometry}

\title{Caged Retractions of Polymatroids}
\author{Ari Pomeranz}
\date{August 2026}

\begin{document}

\maketitle
\begin{abstract}
We develop a unified theory of caged retractions of discrete polymatroids. Given a polymatroid and a cage $\kappa$, the $\kappa$-retraction is a canonical $\kappa$-caged polymatroid obtained by projecting bases into the cage and retaining the maximal projected bases. We prove that this construction agrees with an explicit rank-function formula. We show that the inclusion of the $\kappa$-caged polymatroids into all
polymatroids and the $\kappa$-retraction form a Galois connection with
respect to the weak-map order. As applications, we obtain caged versions of polymatroid union, the disjoint basis theorem, and induction along a bipartite graph. When $\kappa=\ones$, these recover the corresponding matroid constructions. We also study how caged retractions interact with Lorentzian polynomials and representations over near-idempotent tracts. In each case, the construction preserves the relevant structure.
\end{abstract}

\section{Introduction}

\textit{Polymatroids} are combinatorial objects which generalize matroids by allowing bases to be multisets. They have recently played an important role in combinatorial Hodge theory, particularly through the theory of Lorentzian polynomials \cite{branden2024lorentzianpolynomials}. This motivates the study of natural constructions and operations on polymatroids.

The focus of this paper is an operation which takes an arbitrary polymatroid and produces a \emph{caged polymatroid}. When the cage is the unit cage $[0,1]^n$, this operation produces a matroid. In this case, it agrees with the well-known construction discussed in Chapter~11 of \cite{oxleyMatroidTheory2011}, where it is called the \emph{induced matroid} of a polymatroid.

We call this operation the \emph{retraction} of a polymatroid. The retraction can be described both in terms of bases and in terms of rank functions. Related constructions have appeared elsewhere. At the level of independence polytopes, this operation is used in \cite{eur2023intersectiontheorypolymatroids} to define a notion of polymatroid union. At the level of rank functions, the same construction appears, for instance, in Chapter~44 of \cite{schrijver2003combinatorialB}.

The equivalence of the basis and rank-function formulations does not seem to be explicitly recorded in the literature. We prove this equivalence using a poset of polymatroids and Galois connections. This argument also gives a natural proof of the Matroid Union Theorem, as well as a generalization to caged polymatroids. We also prove a caged version of the Disjoint Basis Theorem. This result will be used in forthcoming work on a polymatroidal version of the Shannon Switching Game. Finally, we consider induction of polymatroids along bipartite graphs and show that applying the matroid retraction recovers the classical construction for matroids.

In addition to these structural results, we show that retraction preserves two important kinds of additional structure: the Lorentzian property for polynomials and representations of polymatroids over near-idempotent tracts.

The core combinatorial arguments are self-contained. We use several
standard results from discrete polymatroid theory, discrete convex
analysis, Lorentzian polynomial theory, and polymatroid representation
theory, which are cited where needed. For general background on
discrete polymatroids, we recommend
\cite{herzog2003discretepolymatroids}. Some familiarity with matroid
theory is assumed, and matroids are used throughout as motivation and
as a source of examples.

\paragraph*{Acknowledgments.}
The author thanks Matthew Baker for suggesting this topic and for his guidance throughout the project. His suggestions inspired the Galois connection arguments used in this paper. The author also thanks Tong Jin for initial work which helped clarify the Galois connection in the matroid case.

\paragraph*{AI disclosure.}
Generative AI tools, including ChatGPT and Gemini, were used in preparing
this manuscript to assist with editing, exposition, and checking proofs for
possible gaps or unclear steps. The central mathematical ideas and results
were developed by the author; no principal theorem or lemma originated from
an AI tool. All AI-generated suggestions were independently evaluated and
verified by the author, who takes full responsibility for the content and
correctness of the manuscript.

\section{Preliminaries}

\subsection{Basic Definitions of Polymatroids}

We recall the basic definitions of M-convex sets, polymatroids, caged polymatroids, and embedded minors.

\begin{definition}
    Let
    \[
        \dnr=\{\alpha\in\Zgeqn \mid \alpha_1+\cdots+\alpha_n=r\}.
    \]
    A nonempty set $J\subseteq\dnr$ is an \emph{M-convex set} of rank $r$ on $[n]$ if it satisfies the \emph{symmetric exchange axiom}:
    \begin{align*}
        \tag{SA}\label{def:m_convex_set}
        &\text{for every $\alpha,\beta\in J$ and every $i\in[n]$ with $\alpha_i<\beta_i$, there exists}\\
        &\text{$j\in[n]$ with $\alpha_j>\beta_j$ such that $\alpha+e_i-e_j,\ \beta-e_i+e_j\in J$.}
    \end{align*}
\end{definition}

The elements of $J$ are called the \emph{bases} of the M-convex set. Any nonempty subset of $\Zgeqn$ satisfying the symmetric exchange axiom is automatically contained in $\dnr$ for some $r\in\Zgeq$.

A standard result in discrete convex analysis \cite{murotaDiscreteConvexAnalysis2003} says that the symmetric exchange axiom is equivalent to the following \emph{exchange axiom}:
\begin{align*} \tag{EA} \label{def:weak_exchange_axiom}
        & \text{For every } \alpha,\beta \in J, \text{ and every } i \in [n] \text{ with } \alpha_i < \beta_i, \text{ there exists } \\ &j \in [n] \text{ with } \alpha_j > \beta_j \text{ such that } \alpha +e_i - e_j \in J.
\end{align*}
In practice, this version is often easier to verify.

An M-convex set is a multiset version of the set of bases of a matroid. The following definition is an extension of the rank function of a matroid. 
\begin{definition}
    A (discrete) \emph{polymatroid} is a pair $\mathcal{P}=(E,f)$, where $E$ is a finite set and $f\colon 2^E\to\Zgeq$ satisfies:
    \begin{enumerate}
        \item \emph{Normalized:} $f(\varnothing)=0$.

        \item \emph{Monotone:} if $S\subseteq T\subseteq E$, then $f(S)\leq f(T)$.

        \item \emph{Submodular:} if $S,T\subseteq E$, then
        \[
            f(S\cup T)+f(S\cap T)\leq f(S)+f(T).
        \]
    \end{enumerate}
\end{definition}
The function $f$ is called the \emph{rank function}. Since all polymatroids in this paper are discrete, we omit the word ``discrete.'' Every matroid rank function is a polymatroid rank function, with the additional property that $f(S)\leq |S|$ for all $S\subseteq E$.

The following proposition shows polymatroid rank functions are in bijection with M-convex sets, see \cite{herzog2003discretepolymatroids} for details.
\begin{proposition} \label{prop:M_convex_to_rank}
    The map $J \mapsto f_J$ defined by
    \[
    f_J(A) = \max \big\{ \sum_{i\in A} \alpha_i \mid \alpha \in J \big\}
    \]
    induces a bijection between M-convex sets and polymatroid rank functions. If $J$ is an M-convex set, then $f_J$ is a polymatroid rank function. Conversely, given a polymatroid rank function $f$, the set
    \[
    J_f = \big\{ \alpha \in \Zgeqn \mid \sum_{i \in S} \alpha_i \leq f(S)\ \forall S \subseteq [n] \quad \text{and} \quad \sum_{i \in [n]}\alpha_i = f([n]) \big\}
    \]
    is an M-convex set. Furthermore, these maps are inverses.
\end{proposition}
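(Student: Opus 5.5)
The plan has four parts: show that $f_J$ is a polymatroid rank function, show that $J_f$ is M-convex, and then check both compositions. Throughout, write $\alpha(A)=\sum_{i\in A}\alpha_i$.

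\emph{Step 1: $f_J$ is a rank function.} Normalization is immediate, and monotonicity holds because the entries are nonnegative. For submodularity I would use a greedy characterization. First show that for any chain $\varnothing=A_0\subset A_1\subset\cdots\subset A_n=[n]$ there is a single $\alpha\in J$ with $\alpha(A_k)=f_J(A_k)$ for every $k$. To prove this, take $\alpha\in J$ that lexicographically maximizes $(\alpha(A_1),\dots,\alpha(A_n))$. If some $\alpha(A_k)<\beta(A_k)=f_J(A_k)$, pick $i\in A_k$ with $\alpha_i<\beta_i$ and apply (EA) to obtain $\alpha+e_i-e_j$. One then has to argue that a suitable choice of $\beta$ (for instance, one agreeing with $\alpha$ as much as possible on the earlier levels) forces $j\notin A_k$, which contradicts lexicographic maximality. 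Given such a chain-optimal $\alpha$, take a chain passing through $S\cap T$ and $S$, continuing to $S\cup T$. Then
\[
f_J(S\cup T)+f_J(S\cap T)=\alpha(S\cup T)+\alpha(S\cap T)=\alpha(S)+\alpha(T)\le f_J(S)+f_J(T).
\]
This is the first technical point.

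\emph{Step 2: $J_f$ is M-convex.} Nonemptiness comes from the greedy vector $\alpha_{\sigma(k)}=f(\{\sigma(1),\dots,\sigma(k)\})-f(\{\sigma(1),\dots,\sigma(k-1)\})$. This vector lies in $J_f$ by submodularity; it is the Edmonds greedy argument. For (EA), take $\alpha,\beta\in J_f$ and $i$ with $\alpha_i<\beta_i$. The vector $\alpha+e_i$ violates some constraint exactly when $i$ lies in an $\alpha$-tight set, i.e.\ a set $S$ with $\alpha(S)=f(S)$. By submodularity, tight sets are closed under union and intersection, so there is a minimal tight set $T\ni i$ (or none, which cannot happen since $[n]$ is tight). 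Now $\beta(T)\le f(T)=\alpha(T)$ and $\beta_i>\alpha_i$, so some $j\in T$ has $\alpha_j>\beta_j$. Moreover, every tight set containing $i$ also contains $j$, by minimality of $T$. Hence $\alpha+e_i-e_j$ satisfies all constraints. The sum condition is preserved and nonnegativity holds since $\alpha_j>\beta_j\ge0$, so $\alpha+e_i-e_j\in J_f$. This step is routine.

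\emph{Step 3: $f_{J_f}=f$.} Clearly $f_{J_f}\le f$. For equality at $A$, use the greedy vector for an ordering that lists $A$ first; it achieves $\alpha(A)=f(A)$.

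\emph{Step 4: $J_{f_J}=J$.} The inclusion $J\subseteq J_{f_J}$ is immediate. For the reverse, suppose $\gamma\in J_{f_J}\setminus J$. Choose $\alpha\in J$ minimizing $\|\alpha-\gamma\|_1$. Consider the sets where $\alpha$ exceeds $\gamma$ and where it falls short; combined with exchange, this should give a contradiction with $\gamma(S)\le f_J(S)$. Concretely, I would take $S=\{i:\gamma_i>\alpha_i\}$ and show that every $\alpha'\in J$ reachable by exchanges satisfies $\alpha'(S)\le\alpha(S)<\gamma(S)$, so $f_J(S)<\gamma(S)$. This is the delicate part. The cleaner route is a separation argument: show $J$ equals the lattice points of its convex hull, and that the hull is cut out by the inequalities $x(S)\le f_J(S)$. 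This holds because linear functions $w$ are maximized over $J$ greedily (by Step 1's chain lemma), matching the greedy maximization over $J_{f_J}$.

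I expect the main obstacle to be Steps 1 and 4, i.e.\ proving the chain/greedy lemma from (EA) alone, so I would likely cite \cite{murotaDiscreteConvexAnalysis2003,herzog2003discretepolymatroids} for hole-freeness of M-convex sets if the direct argument becomes long.
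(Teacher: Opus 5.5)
The paper gives no proof of this proposition; it cites Herzog--Hibi \cite{herzog2003discretepolymatroids}. Judged as a self-contained argument, your Steps 2 and 3 are correct, and the minimal-tight-set argument for (EA) in $J_f$ is exactly right. Deducing submodularity from the chain lemma in Step 1 is also fine. The genuine gap is in Step 4, the inclusion $J_{f_J}\subseteq J$, which is the substantive half of the bijection.

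Your concrete plan fails. Take $\alpha\in J$ that is $\ell_1$-closest to $\gamma$ and set $S=\{i:\gamma_i>\alpha_i\}$. An exchange $\alpha+e_i-e_j$ with $i\in S$ and $\gamma_j=\alpha_j$ leaves the distance to $\gamma$ unchanged but raises $\alpha(S)$, so minimality tells you nothing about $\alpha(S)$. For example, take $J=\{(1,1,0),(0,1,1)\}$ and $\gamma=(1,0,1)$. Both elements of $J$ are closest to $\gamma$. The corresponding sets are $S=\{3\}$ and $S=\{1\}$, and both satisfy $\gamma(S)=1=f_J(S)$, so neither certifies $\gamma\notin J_{f_J}$; the violated set is $\{1,3\}$. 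Your fallback does not close the gap either. Once $\operatorname{conv}(J)$ is identified with $\{x: x(S)\le f_J(S),\ x([n])=f_J([n])\}$, hole-freeness of $J$ is equivalent to $J_{f_J}=J$, so citing it amounts to citing the proposition.

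The missing idea is to enlarge $P=\{i:\gamma_i>\alpha_i\}$ to an $\alpha$-tight set, meaning a set $X$ with $\alpha(X)=f_J(X)$. By submodularity of $f_J$, tight sets are closed under union.

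First show that $D_i=\{i\}\cup\{k:\alpha+e_i-e_k\in J\}$ is tight. If not, choose $\beta\in J$ maximizing $\beta(D_i)$ and, among such, $\ell_1$-closest to $\alpha$. Pick $i'\in D_i$ with $\beta_{i'}>\alpha_{i'}$. Then (SA) gives $k$ with $\alpha+e_{i'}-e_k\in J$ and $\beta-e_{i'}+e_k\in J$. Closeness forces $k\notin D_i$, since otherwise $\beta-e_{i'}+e_k$ would be a closer maximizer.
\begin{itemize}
\item If $i'=i$, this already contradicts $k\notin D_i$.
\item If $i'\neq i$, apply (EA) to $\alpha+e_{i'}-e_k$ and $\alpha+e_i-e_{i'}$ at coordinate $i$. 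The only coordinate where the first exceeds the second is $i'$, so $\alpha+e_i-e_k\in J$, again a contradiction.
\end{itemize}

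Now $T=\bigcup_{i\in P}D_i$ is tight and contains $P$. By minimality of $\alpha$, it is disjoint from $\{j:\gamma_j<\alpha_j\}$. Hence $\gamma(T)-f_J(T)=\sum_{i\in P}(\gamma_i-\alpha_i)>0$.

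Your Step 1 chain lemma needs the same device of a closest $\beta$ combined with (SA), and the sketch as written does not supply it. (EA) gives no move on the $\beta$ side, so nothing forces $j\notin A_k$. With (SA), take $\beta$ to be an $\ell_1$-closest maximizer of $x(A_k)$. Then $j\in A_k$ would make $\beta-e_i+e_j$ a closer maximizer, so $j\notin A_k$ and $\alpha+e_i-e_j$ is lexicographically larger than $\alpha$.
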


In light of Proposition~\ref{prop:M_convex_to_rank}, we will identify M-convex sets as a cryptomorphism of polymatroids in terms of bases. For our purposes, we will generally use the terminology corresponding to M-convexity, that is we will discuss M-convex sets $J$ in $\Zgeqn$, with the understanding that these are the bases of some polymatroid $\mathcal{P}$ on the ground set $[n]$. If we are interested in the associated polymatroid rank function, we will make this explicit, which is most relevant in Section~\ref{section:galois_connections_and_applications}. 

Let us consider some examples.
\begin{example}\label{example:matroids_are_polymatroids}
    Let $M$ be a matroid on $[n]$, with set of bases $\mathcal{B}(M)$. We associate to $M$ the set
    \[
        J_M \coloneqq \{\ones_B \mid B \in \mathcal{B}(M)\} \subseteq \{0,1\}^n,
    \]
    where $\ones_B$ denotes the indicator vector of $B$. The base exchange axiom for $M$ is exactly the base exchange axiom for $J_M$, and hence $J_M$ is M-convex. In this way, matroids may be viewed as the M-convex sets contained in the unit cube $\{0,1\}^n$. Conversely, any M-convex set contained in $\{0,1\}^n$ arises as the set of indicator vectors of the bases of a matroid.

    Under this identification, the rank function of $J_M$ is the usual matroid rank function of $M$. In particular, matroids are precisely the polymatroids whose associated M-convex sets are caged by $\ones$.
\end{example}
\begin{example}
    For any $n,r \in \Zgeq$, $\dnr$ is an M-convex set. It is analogous to a multiset version of $U_{r,n}$. For $r \leq n$, one can show $J_{U_{r,n}} = \dnr \cap \{ 0,1\}^n$. 
\end{example}

We equip $\Zgeqn$ with the coordinatewise partial order: for $\alpha,\beta\in\Zgeqn$,
\[
    \alpha\leq\beta
    \quad\Longleftrightarrow\quad
    \alpha_i\leq\beta_i \text{ for every } i\in[n].
\]
For $\alpha\in\Zgeqn$ and $S\subseteq[n]$, write
\[
    \alpha(S)=\sum_{i\in S}\alpha_i,
\]
and let $|\alpha|=\alpha([n])$. For $\alpha,\beta\in\Zgeqn$, we write $\alpha\wedge\beta$ for their coordinatewise minimum and $\alpha\vee\beta$ for their coordinatewise maximum.
\begin{definition}\label{def:caged_polymatroids}
    Let $\kappa\in\Zgeqn$. An M-convex set $J$ is \emph{$\kappa$-caged} if $\alpha\leq\kappa$ for every $\alpha\in J$. The vector $\kappa$ is called a \emph{cage}.
\end{definition}
It is sometimes useful to regard a $\kappa$-caged polymatroid as a pair $(J,\kappa)$. The cage is not required to be minimal. By Example~\ref{example:matroids_are_polymatroids}, matroids are precisely the $\ones$-caged polymatroids, where $\ones$ is the vector with all coordinates equal to $1$.

As an immediate consequence of Proposition~\ref{prop:M_convex_to_rank}, we have the following characterization of caged polymatroids in terms of their rank functions.
\begin{corollary} \label{corollary:caged_polymatroids_rank}
    Let $\kappa \in \Zgeqn$. A polymatroid $\mathcal{P}$ on $[n]$ with rank function $f$ is $\kappa$-caged if and only if for every $S \subseteq [n]$, we have $f(S) \leq \kappa(S)$. Furthermore, by submodularity and monotonicity, this is equivalent to checking only singletons: 
    \[
    f(\{i\}) \leq \kappa_i \quad \text{for all } i \in [n].
    \]
\end{corollary}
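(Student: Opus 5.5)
We prove the three conditions equivalent: (a) $J$ is $\kappa$-caged; (b) $f(S)\le\kappa(S)$ for every $S\subseteq[n]$; (c) $f(\{i\})\le\kappa_i$ for every $i\in[n]$. Here $J=J_f$ is the M-convex set of bases attached to $f$ by Proposition~\ref{prop:M_convex_to_rank}, so $f=f_J$ and $f(A)=\max\{\alpha(A)\mid\alpha\in J\}$. We show (a)$\Rightarrow$(b)$\Rightarrow$(c)$\Rightarrow$(a).

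\emph{(a)$\Rightarrow$(b).} If $\alpha\le\kappa$ for every $\alpha\in J$, then $\alpha(S)\le\kappa(S)$ for every $S$ and every $\alpha\in J$. Taking the maximum over $\alpha\in J$ gives $f(S)\le\kappa(S)$.

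\emph{(b)$\Rightarrow$(c).} Specialize to $S=\{i\}$.

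\emph{(c)$\Rightarrow$(a).} For $\alpha\in J$ and $i\in[n]$ we have $\alpha_i=\alpha(\{i\})\le f(\{i\})\le\kappa_i$. Hence $\alpha\le\kappa$, and $J$ is $\kappa$-caged. This already closes the cycle, so the second assertion of the corollary follows directly from the formula for $f_J$.

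The role of submodularity and monotonicity, as mentioned in the statement, can also be seen directly. Normalization and submodularity applied to disjoint sets give subadditivity, $f(S\cup T)\le f(S)+f(T)$. Induction on $|S|$ then yields $f(S)\le\sum_{i\in S}f(\{i\})\le\kappa(S)$, which is (c)$\Rightarrow$(b) without passing through bases. I would include this as a remark to match the statement's phrasing.

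There is no real obstacle. The only point needing care is the identification $J=J_f$ and $f=f_J$ from Proposition~\ref{prop:M_convex_to_rank}: caging is a property of the bases, and the inequalities are properties of the rank function.
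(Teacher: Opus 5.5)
Your proof is correct and is essentially the paper's argument: the paper treats the equivalence as an immediate consequence of the basis/rank-function correspondence of Proposition~\ref{prop:M_convex_to_rank}, which is exactly what your steps $(a)\Rightarrow(b)$ and $(c)\Rightarrow(a)$ use. The only difference is that your main cycle obtains the singleton reduction from the max formula for $f_J$, whereas the paper attributes it to the rank-function properties; your remark supplies that rank-function argument, and you are right that it uses normalization and submodularity (to get subadditivity) and does not need monotonicity.
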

Applying this to the matroid case recovers the fact that a matroid rank function $f$ satisfies $f(S) \leq |S|$ for every $S \subseteq [n]$.

\subsection{Embedded Minors of Polymatroids}
We now briefly review the theory of embedded minors of polymatroids introduced in \cite{baker2025representationtheorypolymatroids}, and describe the effect of each embedded minor operation on the cage.

Every polymatroid $J$ has a minimal cage, which we denote by $\delplus$. Precisely, $\delplus$ is the vector defined by \[
(\delplus)_i = \max \{\alpha_i \mid \alpha \in J \}.
\]
A polymatroid $J$ is a matroid if and only if $\delplus \leq \ones$. We also define the vector $\delminus$ by \[
(\delminus)_i = \min \{\alpha_i \mid \alpha \in J \}.
\]
We now define embedded minors of polymatroids.

\begin{definition} \label{def:embedded_minor}
Let $J$ be an M-convex set. Let $\mu\in\Zgeqn$ be \emph{effectively independent} in $J$ ($\exists \alpha \in J, \mu +\delminus \leq \alpha$), and let $\nu\in\Zgeqn$ be \emph{effectively coindependent} in $J$ ($\exists \alpha \in J, \alpha \leq \delplus-\nu$). Let $\tau \in \mathbb{Z}^n$. We define:
\begin{itemize}
\item \emph{Contraction:} $J/\mu \coloneqq \{ \alpha - \mu \mid \alpha \in J, \mu + \delminus \leq \alpha\}$.
\item \emph{Deletion:} $J\setminus \nu \coloneqq \{\alpha \in J \mid \alpha \leq \delplus - \nu \}$.
\item \emph{Translation:} For $\tau \geq -\delminus$, $J + \tau \coloneqq \{ \alpha + \tau \mid \alpha \in J \}$.
\end{itemize}

An \emph{embedded minor} of $J$ consists of an M-convex set
\[
    K=(J\setminus\nu)/\mu+\tau
\]
together with the minor embedding
\[
    \iota_K\colon K\longrightarrow J,
    \qquad
    \gamma\longmapsto\gamma+\mu-\tau,
\]
where $\tau\geq-\delta^-_{(J\setminus\nu)/\mu}$ and there exists $\alpha\in J$ such that $\mu+\delta^-_{J\setminus\nu} \leq\alpha\leq\delta_J^+-\nu.$
\end{definition}

Effective independence guarantees that $J/\mu$ is nonempty, while
effective coindependence guarantees that $J\setminus\nu$ is nonempty.
For an embedded minor, the combined inequality ensures that $\mu$ is
effectively independent after deleting $\nu$. The condition on $\tau$
ensures that the final translated set remains in
$\mathbb{Z}_{\geq 0}^n$. The same underlying M-convex set may arise
from different choices of $\mu$, $\nu$, and $\tau$, and hence with
different minor embeddings.

\begin{remark} \label{remark:embedded_minor_cage}
For deletion and translation, we have \[
\delta_{J\setminus \nu}^+ = \delplus - \nu \quad \text{and} \quad \delta_{J + \tau}^+ = \delplus + \tau.
\]
Thus, deletion and translation transform the minimal cage in the expected way. This is not the case for contraction; see the following example.
\end{remark}

\begin{example}
Consider the M-convex set $J = \{(0,2),(1,1)\}$. Then $J/e_1 = \{(0,1)\}$, so that $\delta_{J/e_1}^+ = (0,1)$. However, $\delplus - e_1 = (0,2)$.
\end{example}

In \cite{baker2025representationtheorypolymatroids}, it is shown that $J/\mu$, $J \setminus \nu$, and $J + \tau$ are M-convex. They also show that $J /(\mu_1 + \mu_2) = (J/\mu_1) /\mu_2$ and $J \setminus(\nu_1 + \nu_2) = (J\setminus \nu_1) \setminus \nu_2$. In particular, these identities allow us to consider deletion, contraction, and translation by a single coordinate at a time in order to understand all embedded minors.

Embedded minors have proven useful for studying representations of polymatroids. The remainder of this paper is focused on studying the $\kappa$-retraction of $J$, which we shall see is always an embedded minor of $J$.

\section{Defining the Retraction of a Polymatroid}
In this section, we define the $\kappa$-retraction $\RetK(J)$ of an M-convex set $J$ and prove structural results relating $\RetK(J)$ to $J$. The main result is a structure theorem describing the fibers of the projection map from $J$ onto its retraction. We also show that $\RetK(J)$ can be realized as an embedded minor of $J$. Intuitively, $\RetK(J)$ should be thought of as a closest $\kappa$-caged polymatroid to $J$.

We first define two operations which will be used to define the $\kappa$-retraction. The first is a projection operation which takes an arbitrary vector in $\Zgeqn$ and projects it into the cage. The second is a maximality operation which takes a set of vectors and returns the subset of vectors whose image under the projection is coordinatewise maximal.

Throughout this section, $J$ is an M-convex set on $[n]$ of rank $r$ and we fix a cage $\kappa\in\Zgeqn$.

\begin{definition}
    The \emph{$\kappa$-projection} of a vector $v\in\Zgeqn$ is the vector $\kproj(v)$ defined by
    \[
        \kproj(v)_i=\min(v_i,\kappa_i).
    \]
    Equivalently, $\kproj(v)=v\wedge\kappa$. For a subset $X\subseteq\Zgeqn$, define
    \[
        \kproj(X)=\{\kproj(v)\mid v\in X\}.
    \]
\end{definition}

\begin{definition}
    For a set $X\subseteq\Zgeqn$, define $X^{\max_\kappa}$ to be the set of elements $v\in X$ such that $\kproj(v)$ is coordinatewise maximal in $\kproj(X)$. We say that such an element is \emph{$\kappa$-maximal in $X$}. A set satisfying $X=X^{\max_\kappa}$ is called \emph{$\kappa$-maximal}.
\end{definition}
We may now define the retraction.
\begin{definition}
    We define the \emph{$\kappa$-retraction} of a polymatroid $J$ to be \[
    \RetK(J) \coloneqq \kproj(J)^{\max_\kappa}, \quad \text{or equivalently} \quad 
    \kproj(\Jmax).
    \]
\end{definition}
From the definition, we have $\RetK(J) = \RetK(\RetK(J))$ as well as $\Ret_{\delplus}(J) = J$. We will see several properties the retraction satisfies.

\begin{example}
    Consider the M-convex set
    \[
        J = \{(3,0),(2,1),(1,2)\}.
    \]
    Let $\kappa=(2,1)$. Then
    \[
        \kproj(J)=\{(2,0),(2,1),(1,1)\}.
    \]
    The unique maximal element of $\kproj(J)$ is $(2,1)$, and hence
    \[
        \Ret_\kappa(J)=\{(2,1)\}.
    \]
\end{example}

Let us show that $\RetK(J)$ is a $\kappa$-caged polymatroid.

\begin{theorem} \label{theorem:retraction_is_polymatroid}
    The set $\RetK(J)$ is a $\kappa$-caged polymatroid.
\end{theorem}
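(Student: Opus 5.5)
We will show directly that $\RetK(J)$ is M-convex by verifying the exchange axiom \eqref{def:weak_exchange_axiom}. Nonemptiness is clear, since $J$ is finite and nonempty, so $\kproj(J)$ has maximal elements. The $\kappa$-caged property is immediate, since every element of $\kproj(J)$ is $\leq\kappa$. The key intermediate claim is that the maximal elements of $\kproj(J)$ are exactly the elements of maximum size. Set $m=\max\{|\kproj(\beta)| \mid \beta\in J\}$. We will prove that every coordinatewise maximal element $a\in\kproj(J)$ satisfies $|a|=m$. Once this holds, $\Jmax=\{\alpha\in J \mid |\kproj(\alpha)|=m\}$, so membership in $\RetK(J)$ reduces to a size condition. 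A size condition is easy to check after an exchange.

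\emph{Step 1 (maximal implies maximum size).} Let $a=\kproj(\alpha)$ with $\alpha\in J$ and $|a|<m$; we show $a$ is not maximal. Among all $\beta\in J$ with $|\kproj(\beta)|=m$, choose one minimizing $\sum_k|\alpha_k-\beta_k|$, and write $b=\kproj(\beta)$. Since $|a|<|b|$, there is $i$ with $a_i<b_i$. Then $a_i<\kappa_i$, so $\alpha_i=a_i<b_i\leq\beta_i$. Apply \eqref{def:m_convex_set} to $\alpha,\beta,i$ to get $j$ with $\alpha_j>\beta_j$ and $\alpha'=\alpha+e_i-e_j$, $\beta'=\beta-e_i+e_j$ both in $J$. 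We split into two cases.
\begin{itemize}
\item If $\alpha_j>\kappa_j$, then $\kproj(\alpha')=a+e_i$, so $a$ is not maximal.
\item If $\alpha_j\leq\kappa_j$, then $\beta_j<\alpha_j\leq\kappa_j$. Hence $\kproj(\beta')$ gains $1$ in coordinate $j$ and loses at most $1$ in coordinate $i$, so $|\kproj(\beta')|\geq m$, hence $=m$. But $\beta'$ is strictly closer to $\alpha$ in $\ell^1$, contradicting the choice of $\beta$.
\end{itemize}
This step is the main obstacle. A naive exchange applied only to $\alpha$ does not control maximality of the result, which is why we use the minimal-distance choice together with the symmetric exchange on the $\beta$ side.

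\emph{Step 2 (exchange).} Let $a,b\in\RetK(J)$ with $a_i<b_i$, and pick $\alpha,\beta\in\Jmax$ projecting to them. As above, $\alpha_i<\beta_i$, and \eqref{def:weak_exchange_axiom} in $J$ gives $j$ with $\alpha_j>\beta_j$ and $\alpha'=\alpha+e_i-e_j\in J$. If $\alpha_j>\kappa_j$, then $\kproj(\alpha')=a+e_i$ has size $m+1$, which is impossible. So $\alpha_j\leq\kappa_j$, which gives $a_j=\alpha_j>\beta_j\geq b_j$ and $\kproj(\alpha')=a+e_i-e_j$. This vector has size $m$, so by Step 1 it lies in $\RetK(J)$. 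This verifies \eqref{def:weak_exchange_axiom} for $\RetK(J)$, which is therefore a $\kappa$-caged polymatroid.
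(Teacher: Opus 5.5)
Your proof is correct, but it is organized differently from the paper's. The paper verifies \eqref{def:weak_exchange_axiom} for $\RetK(J)$ directly. The exchange in $J$ gives $x+e_i-e_j\in\kproj(J)$, with the case $\kproj(\alpha+e_i-e_j)=x+e_i$ ruled out by maximality of $x$, much as in your Step 2. Maximality of $x+e_i-e_j$ is then proved by contradiction: a strictly dominating $z=\kproj(\gamma)$ must have $z_j=x_j-1$, and a second exchange between $\gamma$ and $\alpha$ in coordinate $j$ produces $z+e_j-e_k\in\kproj(J)$ strictly dominating $x$. In the paper, the fact that the maximal elements of $\kproj(J)$ are exactly those of maximum size (Corollary~\ref{corollary:Jmax_size_description}) only comes afterward, as a consequence of $\RetK(J)$ being M-convex. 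You reverse the order. Your minimum-$\ell^1$-distance argument proves the size characterization first, independently of M-convexity. After that, the exchange step is just a size count and no separate maximality check is needed. Your route isolates the key fact and makes it available up front; it is exactly what the paper later relies on in Lemma~\ref{lemma:Jmax_is_Mconvex}. The cost is a global extremal choice. The paper's argument avoids any extremal choice and uses only \eqref{def:weak_exchange_axiom} in $J$, at the price of a nested contradiction. Your use of the symmetric axiom in Step 1 is also not essential. In the case $\alpha_j\leq\kappa_j$, you can instead apply \eqref{def:weak_exchange_axiom} to $\beta,\alpha$ at coordinate $j$. This gives some $\beta+e_j-e_{i'}\in J$ whose projection still has size $m$ and which is strictly closer to $\alpha$, so you reach the same contradiction.
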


\begin{proof}
    Let $x,y \in \RetK(J)$, with $x_i < y_i$. By definition, there exist $\alpha,\beta \in J$ with $x = \kproj(\alpha)$ and $y = \kproj(\beta)$. Since $y_i \leq \kappa_i$, we must have $\alpha_i = x_i < y_i \leq \beta_i$. Thus by the base exchange axiom, there is some $j \neq i$ with $\alpha_j > \beta_j$ and $\alpha + e_i - e_j \in J$. Since $x_i < \kappa_i$, we have
    \[
        \kproj(\alpha + e_i - e_j)
        =
        \begin{cases}
            x + e_i, & \alpha_j > \kappa_j, \\
            x + e_i - e_j, & \alpha_j \leq \kappa_j.
        \end{cases}
    \]
    Because $x$ is maximal in $\kproj(J)$, the first case is impossible. Hence $\alpha_j \leq \kappa_j$, and so $\kproj(\alpha + e_i - e_j)=x+e_i-e_j$. In particular, $\kappa_j \geq \alpha_j > \beta_j$, so $x_j > y_j$. It remains to show that $x + e_i - e_j$ is maximal in $\kproj(J)$.

    Suppose for contradiction that there exists some $z \in \RetK(J)$ with $x+e_i-e_j \leq z$ and $x+e_i-e_j \neq z$. Let $z = \kproj(\gamma)$ with $\gamma \in J$. Because $x$ is maximal, we must have $z_j = x_j - 1$. Hence $\gamma_j = z_j < \alpha_j$. By the base exchange axiom, there is some $k \in [n]$ with $\alpha_k < \gamma_k$ and $\gamma + e_j - e_k \in J$. Since $\gamma_j < \kappa_j$, we have
    \[
        \kproj(\gamma + e_j - e_k)
        =
        \begin{cases}
            z + e_j, & \gamma_k > \kappa_k, \\
            z + e_j - e_k, & \gamma_k \leq \kappa_k.
        \end{cases}
    \]
    By maximality of $z$, the first case is impossible, so $\gamma_k \leq \kappa_k$ and $z+e_j-e_k \in \kproj(J)$. But then $\alpha_k < \gamma_k \leq \kappa_k$, so $x_k=\alpha_k<\gamma_k=z_k$. Thus $z+e_j-e_k$ dominates $x$. Moreover, since $z \geq x+e_i-e_j$ and $z\neq x+e_i-e_j$, we have $|z|>|x|$. Hence $|z+e_j-e_k|>|x|$, so the domination is strict. This contradicts the maximality of $x$.

    Therefore $x+e_i-e_j$ is maximal in $\kproj(J)$, and hence $x+e_i-e_j \in \RetK(J)$. Finally, every element of $\RetK(J)$ is bounded above by $\kappa$ by definition of $\kproj$. Thus $\RetK(J)$ is a $\kappa$-caged polymatroid.
\end{proof}
\begin{remark}
    When $\kappa=\ones$, the retraction produces an ordinary matroid. We call this special case the \emph{matroid retraction}.
\end{remark}
We will usually reserve the letters $x$ and $y$ for elements of $\RetK(J)$, and letters $\alpha$ and $\beta$ for elements of $J$. As a consequence of Theorem~\ref{theorem:retraction_is_polymatroid}, we have the following alternate description of $\Jmax$.
\begin{corollary} \label{corollary:Jmax_size_description}
    We have
    \[
        \Jmax = \{\alpha\in J \mid |\kproj(\alpha)| \text{ is maximal}\}.
    \]
\end{corollary}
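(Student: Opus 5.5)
The corollary says that $\kappa$-maximality (coordinatewise) and maximality of $|\kproj(\alpha)|$ pick out the same elements of $J$. One inclusion is trivial. The other I would get from Theorem~\ref{theorem:retraction_is_polymatroid}, using the fact that all bases of an M-convex set have the same size.

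For the inclusion $\supseteq$, let $\alpha\in J$ have $|\kproj(\alpha)|$ maximal among elements of $J$. Suppose some $\beta\in J$ had $\kproj(\beta)\geq\kproj(\alpha)$ with $\kproj(\beta)\neq\kproj(\alpha)$. Then $|\kproj(\beta)|>|\kproj(\alpha)|$, contradicting maximality. So $\kproj(\alpha)$ is coordinatewise maximal in $\kproj(J)$, and hence $\alpha\in\Jmax$.

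For the inclusion $\subseteq$, let $\alpha\in\Jmax$, so $\kproj(\alpha)\in\RetK(J)$.
\begin{enumerate}
\item Choose $\beta\in J$ with $|\kproj(\beta)|$ as large as possible.
\item By the first inclusion, $\beta\in\Jmax$, so $\kproj(\beta)\in\RetK(J)$ as well.
\item By Theorem~\ref{theorem:retraction_is_polymatroid}, $\RetK(J)$ is M-convex, and every M-convex set lies in some $\Delta_n^{r'}$. Hence all of its elements have the same size.
\item Therefore $|\kproj(\alpha)|=|\kproj(\beta)|$, which is the maximum value, and $\alpha$ belongs to the right-hand side.
\end{enumerate}

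The only substantive input is the equicardinality of bases of $\RetK(J)$. This follows from the symmetric exchange axiom and is recorded in the paper's remark that any nonempty set satisfying the axiom lies in some $\dnr$. No other obstacle is expected.
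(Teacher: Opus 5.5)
Your proposal is correct and follows the paper's own argument. An $\alpha$ maximizing $|\kproj(\alpha)|$ has a coordinatewise-maximal projection, and Theorem~\ref{theorem:retraction_is_polymatroid} then forces every element of $\RetK(J)=\kproj(\Jmax)$ to have that same size; you simply write out the two inclusions separately, where the paper compresses them into a few lines.
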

\begin{proof}
    Let $\alpha \in J$ be such that $|\kproj(\alpha)|$ is maximal. Then $\kproj(\alpha)$ is maximal with respect to coordinatewise order, so $\kproj(\alpha) \in \RetK(J)$. Because $\RetK(J)$ is a polymatroid, all elements have the same size, so all elements of $\RetK(J)$ have size $|\kproj(\alpha)|$. Because $\RetK(J) = \kproj(\Jmax)$, the result follows.
\end{proof}

Using this corollary, we have the following lemma.
\begin{lemma} \label{lemma:Jmax_is_Mconvex}
    The set $\Jmax$ is an M-convex set on $[n]$.
\end{lemma}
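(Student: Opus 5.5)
The plan is to use the characterization of $\Jmax$ from Corollary~\ref{corollary:Jmax_size_description} and to check the symmetric exchange axiom \eqref{def:m_convex_set} directly inside $\Jmax$. Let $m$ denote the maximal value of $|\kproj(\alpha)|$ over $\alpha\in J$. By that corollary,
\[
\Jmax=\{\alpha\in J \mid |\kproj(\alpha)| = m\}.
\]
This set is nonempty because $J$ is nonempty and finite, and it lies in $\dnr$ because $J$ does. So only the exchange property needs proof.

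First I would rewrite the size of a projection as a separable concave function. Set $\varphi_k(t)=\min(t,\kappa_k)$, so that $|\kproj(\alpha)|=\sum_k \varphi_k(\alpha_k)$. Each $\varphi_k$ is concave on $\Zgeq$, meaning its increments $\varphi_k(t+1)-\varphi_k(t)$ are nonincreasing in $t$. They equal $1$ for $t<\kappa_k$ and $0$ afterwards.

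Next, take $\alpha,\beta\in\Jmax$ and $i$ with $\alpha_i<\beta_i$. Apply the symmetric exchange axiom in $J$ (not merely \eqref{def:weak_exchange_axiom}). This gives $j$ with $\alpha_j>\beta_j$ such that $\alpha'=\alpha+e_i-e_j$ and $\beta'=\beta-e_i+e_j$ both lie in $J$. I would then compare $|\kproj(\alpha')|+|\kproj(\beta')|$ with $|\kproj(\alpha)|+|\kproj(\beta)|=2m$. Only coordinates $i$ and $j$ change.

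In coordinate $i$ the net change is
\[
\bigl[\varphi_i(\alpha_i+1)-\varphi_i(\alpha_i)\bigr]-\bigl[\varphi_i(\beta_i)-\varphi_i(\beta_i-1)\bigr].
\]
This is $\geq 0$ by concavity, since $\alpha_i\leq\beta_i-1$. Symmetrically, in coordinate $j$ the net change is
\[
\bigl[\varphi_j(\beta_j+1)-\varphi_j(\beta_j)\bigr]-\bigl[\varphi_j(\alpha_j)-\varphi_j(\alpha_j-1)\bigr]\geq 0,
\]
since $\beta_j\leq\alpha_j-1$. Hence $|\kproj(\alpha')|+|\kproj(\beta')|\geq 2m$. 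Each term is at most $m$ by the definition of $m$, so both equal $m$. Therefore $\alpha',\beta'\in\Jmax$, which is exactly \eqref{def:m_convex_set} for $\Jmax$.

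The only delicate point is using the symmetric rather than the one-sided exchange. With \eqref{def:weak_exchange_axiom} alone, $\alpha+e_i-e_j$ could lose size if $\alpha_j\leq\kappa_j$ and $\alpha_i\geq\kappa_i$, and nothing would compensate for the loss. Pairing the two exchanged bases is what lets the concavity inequality close the argument.

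An alternative route would cite discrete convex analysis \cite{murotaDiscreteConvexAnalysis2003}. There, adding a separable convex function such as $-\sum_k\varphi_k$ to the indicator of an M-convex set yields an M-convex function, and its minimizer set is M-convex. I would present the direct argument, since it is self-contained.
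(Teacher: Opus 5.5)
Your argument is correct and is essentially the paper's proof: both use Corollary~\ref{corollary:Jmax_size_description}, apply the symmetric exchange axiom in $J$ to obtain $\alpha+e_i-e_j$ and $\beta-e_i+e_j$, and compare projection sizes. The paper does a two-case analysis on whether $\alpha_j>\kappa_j$, using $\beta-e_i+e_j$ in the second case to force $\beta_i\leq\kappa_i$, and concludes only that $\alpha+e_i-e_j\in\Jmax$. Your concavity inequality on the sum handles both cases at once and places both exchanged vectors in $\Jmax$, which verifies the symmetric axiom directly.
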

\begin{proof}
    We verify the exchange axiom. Let $\alpha,\beta \in \Jmax$ and $i \in [n]$ such that $\alpha_i < \beta_i$. Since $\Jmax \subseteq J$, the M-convexity of $J$ implies there exists some $j \in [n]$ such that $\alpha_j > \beta_j$ and \[
    \alpha' = \alpha +e_i -e_j \in J \quad \text{and}\quad \beta' = \beta - e_i +e_j \in J.
    \]
    By Corollary~\ref{corollary:Jmax_size_description}, it suffices to show $|\kproj(\alpha')| = |\kproj(\alpha)|$. There are two cases to consider:
    \begin{enumerate}[label=\textit{Case \arabic*:}, leftmargin=*, align=left]
        \item $\alpha_j > \kappa_j$. In this case, we have $\kproj(\alpha)_j = \kproj(\alpha')_j$. Then clearly $|\kproj(\alpha)| \leq |\kproj(\alpha')|$. By maximality of $\alpha$, we have $|\kproj(\alpha)| = |\kproj(\alpha')|$.
        \item $\alpha_j \leq \kappa_j$. Since $\kappa_j \geq \alpha_j > \beta_j$, we also have $\beta_j \leq \kappa_j$. We claim $\beta_i \leq \kappa_i$. Suppose for contradiction that $\beta_i > \kappa_i$. Then we would have $\kproj(\beta')_i = \kproj(\beta)_i$ and $\kproj(\beta')_j > \kproj(\beta)_j$, implying $|\kproj(\beta')| > |\kproj(\beta)|$, which contradicts the maximality of $\beta$.
        
        Therefore, $\beta_i \leq \kappa_i$. Since $\alpha_i < \beta_i$, this forces $\alpha_i < \kappa_i$. Thus the operation taking $\alpha$ to $\alpha'$ decreases $j$ and increases $i$ in the $\kappa$-projection. Hence we have $|\kproj(\alpha')| = |\kproj(\alpha)|$.
    \end{enumerate}
\end{proof}
Let us now view $\kproj$ as a surjection from $\Jmax$ to $\RetK(J)$. The following lemma shows that each fiber, $\kproj^{-1}(x),$ is an M-convex set.
\begin{lemma}\label{lemma:fibers_Mconvex}
    For any $x \in \RetK(J)$, the fiber $\kproj^{-1}(x)$ is an M-convex set.
\end{lemma}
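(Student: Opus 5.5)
The plan is to verify the exchange axiom \eqref{def:weak_exchange_axiom} directly for the fiber
\[
F=\kproj^{-1}(x)=\{\alpha\in \Jmax \mid \alpha\wedge\kappa = x\}.
\]
The fiber is nonempty because $x\in\RetK(J)=\kproj(\Jmax)$. It is convenient to note first that $F=\{\alpha\in J\mid \alpha\wedge\kappa=x\}$. Indeed, any $\alpha\in J$ with $\kproj(\alpha)=x$ has a maximal projection, so it automatically lies in $\Jmax$. Hence we may use the exchange axiom of $J$ itself, and we never need Lemma~\ref{lemma:Jmax_is_Mconvex}.

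The key observation is a coordinate-wise one. Suppose $\alpha,\beta\in F$ and $\alpha_i<\beta_i$. If $\alpha_i<\kappa_i$, then $x_i=\alpha_i$. But then $x_i=\min(\beta_i,\kappa_i)>\alpha_i$, which is a contradiction. Therefore $\kappa_i\le\alpha_i<\beta_i$. In words, two elements of the same fiber can differ only in coordinates where both of them are at or above the cage.

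Now fix $\alpha,\beta\in F$ and $i$ with $\alpha_i<\beta_i$. By \eqref{def:weak_exchange_axiom} for $J$, there is a $j$ with $\alpha_j>\beta_j$ and $\alpha'=\alpha+e_i-e_j\in J$. Applying the observation with the roles of $\alpha$ and $\beta$ swapped in coordinate $j$ gives $\kappa_j\le\beta_j<\alpha_j$, so $\alpha_j-1\ge\kappa_j$. We then compute $\kproj(\alpha')$ coordinate by coordinate:
\begin{itemize}
\item In coordinate $i$, we have $\alpha_i+1>\alpha_i\ge\kappa_i$, so the projected value is still $\kappa_i=x_i$.
\item In coordinate $j$, we have $\alpha_j-1\ge\kappa_j$, so the projected value is still $\kappa_j=x_j$.
\item All other coordinates are unchanged.
\end{itemize}
Hence $\kproj(\alpha')=x$, so $\alpha'\in F$, which verifies \eqref{def:weak_exchange_axiom} for $F$. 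By the equivalence of \eqref{def:weak_exchange_axiom} and \eqref{def:m_convex_set}, $F$ is M-convex.

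There is no real obstacle in this argument. The only point needing care is the coordinate observation, applied at both $i$ and $j$, which shows that the exchange never moves a coordinate across the cage boundary.
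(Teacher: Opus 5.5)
Your proof is correct and follows essentially the same route as the paper: apply the exchange axiom of $J$, observe that $\alpha_i<\beta_i$ and $\alpha_j>\beta_j$ within one fiber force $\kappa_i\le\alpha_i$ and $\kappa_j\le\beta_j<\alpha_j$, and conclude that $\kproj(\alpha+e_i-e_j)=x$. Your remark that any $\alpha\in J$ with $\kproj(\alpha)=x$ automatically lies in $\Jmax$ makes explicit a step the paper leaves implicit.
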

\begin{proof}
    Let $\alpha,\beta \in \kproj^{-1}(x)$ and let $i \in [n]$ with $\alpha_i < \beta_i$. Since $J$ is M-convex, there exists $j \in [n]$ with $\alpha_j > \beta_j$ and $\alpha+e_i -e_j \in J$. It suffices to show \[
    \kproj(\alpha +e_i -e_j) = x.
    \]
    Since $\beta_i > \alpha_i$ and $\alpha,\beta$ have the same $\kappa$-projection, we have $\alpha_i \geq \kappa_i$. Similarly, $\alpha_j > \beta_j$ implies $\kappa_j \leq \beta_j < \alpha_j$. From this, we have $\kproj(\alpha + e_i -e_j) = \kproj(\alpha) = x$, as desired.
\end{proof}

We now try to understand the relationship between the fibers. The following lemma shows that base exchange between two elements of $\RetK(J)$ induces a base exchange between the corresponding fibers.
\begin{lemma} \label{lemma:fiber_exchange_like_retraction}
    Let $x,y \in \RetK(J)$ with $x_i < y_i$. Then for every $\alpha,\beta \in J$ with $\kproj(\alpha) = x$ and $\kproj(\beta)= y$, we have $\alpha_i = x_i$ and $\beta_i = y_i$.

    Moreover, for any $j \in [n]$ with $\alpha_j > \beta_j$ and $\alpha+e_i -e_j \in J$, we have $\alpha_j = x_j$ and $\beta_j = y_j$.
\end{lemma}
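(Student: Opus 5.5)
The plan is to read off both parts of Lemma~\ref{lemma:fiber_exchange_like_retraction} from the coordinatewise definition of $\kproj$, together with the maximality argument already used in the proof of Theorem~\ref{theorem:retraction_is_polymatroid}.

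\textbf{First claim.} Let $\alpha,\beta\in J$ with $\kproj(\alpha)=x$ and $\kproj(\beta)=y$.
\begin{itemize}
\item From $x_i<y_i\le\kappa_i$ we get $x_i=\min(\alpha_i,\kappa_i)<\kappa_i$. This forces $\alpha_i=x_i$.
\item For $\beta$, I would show $\beta_i\le\kappa_i$, since then $\beta_i=y_i$. Suppose instead $\beta_i>\kappa_i$. Then $\beta_i>\kappa_i\ge y_i>x_i=\alpha_i$.
\item Apply the exchange axiom \eqref{def:weak_exchange_axiom} to $\beta,\alpha$ at the coordinate $i$, where $\beta_i>\alpha_i$. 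The symmetric form \eqref{def:m_convex_set} is convenient here, applied with the roles of $\alpha$ and $\beta$ swapped. This gives $k$ with $\beta_k<\alpha_k$ and $\beta+e_k-e_i\in J$.
\item Since $\beta_i>\kappa_i$, the $i$-th coordinate of the projection is unchanged: $\kproj(\beta+e_k-e_i)_i=\kappa_i=y_i$.
\item In coordinate $k$, the projection either rises to $y_k+1$ (if $\beta_k<\kappa_k$) or stays at $y_k$ (if $\beta_k\ge\kappa_k$). In the first case the new projection strictly dominates $y$, contradicting the maximality of $y$ in $\kproj(J)$.
\item In the second case, $\alpha_k>\beta_k\ge\kappa_k$, which gives nothing immediate. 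I would avoid this by choosing the exchange differently. Consider $\alpha$ at coordinate $i$ with $\alpha_i<\beta_i$: there is $j$ with $\alpha_j>\beta_j$ and $\alpha'=\alpha+e_i-e_j\in J$. Because $\alpha_i=x_i<\kappa_i$, coordinate $i$ of the projection increases by one. By the maximality of $x$, coordinate $j$ must drop, so $\alpha_j\le\kappa_j$.
\item The symmetric axiom also gives $\beta'=\beta-e_i+e_j\in J$. If $\beta_i>\kappa_i$, then $\kproj(\beta')_i=\kproj(\beta)_i$. Moreover $\beta_j<\alpha_j\le\kappa_j$, so $\kproj(\beta')_j=y_j+1$. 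Hence $\kproj(\beta')$ strictly dominates $y$, a contradiction.
\end{itemize}
This is exactly the Case~2 argument of Lemma~\ref{lemma:Jmax_is_Mconvex}. So the first claim follows by applying symmetric exchange to $\alpha$ at $i$ and using the maximality of both $x$ and $y$.

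\textbf{Second claim.} Now take any $j$ with $\alpha_j>\beta_j$ and $\alpha+e_i-e_j\in J$.
\begin{itemize}
\item As above, $\alpha_i=x_i<\kappa_i$, so $\kproj(\alpha+e_i-e_j)_i=x_i+1$.
\item If $\alpha_j>\kappa_j$, the projection would be $x+e_i$, contradicting the maximality of $x$. Hence $\alpha_j\le\kappa_j$, so $\alpha_j=x_j$.
\item Then $\beta_j<\alpha_j\le\kappa_j$, so $\beta_j=y_j$.
\end{itemize}

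\textbf{Main obstacle.} The delicate step is $\beta_i=y_i$ in the first claim. The issue is that the hypothesis of the second claim only supplies $\alpha+e_i-e_j\in J$, not the symmetric partner $\beta-e_i+e_j$. So for the first claim I would explicitly invoke \eqref{def:m_convex_set} to obtain some $j$ with both $\alpha+e_i-e_j\in J$ and $\beta-e_i+e_j\in J$, and then run the two maximality contradictions above. Everything else is direct coordinatewise bookkeeping with $\min(\cdot,\kappa)$.
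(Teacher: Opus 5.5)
Your argument is correct and is essentially the paper's proof. You apply symmetric exchange to $\alpha$ at $i$ to get $j$ with $\alpha+e_i-e_j,\ \beta-e_i+e_j\in J$, use maximality of $x$ to force $\alpha_j\le\kappa_j$, and then use maximality of $y$ to force $\beta_i\le\kappa_i$; the ``moreover'' part follows from the first of these two steps. The paper first restricts to $J=\Jmax$ and argues via sizes of projections, whereas you argue directly with coordinatewise maximality in $\kproj(J)$; this difference is cosmetic, and your abandoned first attempt (exchanging from $\beta$'s side alone) can simply be deleted.
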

\begin{proof}
    By definition, each fiber is contained in $\Jmax$, thus we may restrict to the case where $J = \Jmax$. Because $x_i < y_i \leq \kappa_i$, we have $x_i = \alpha_i$, which implies $\alpha_i < y_i \leq \beta_i$. By base exchange, there exists $j \in [n]$ with $\alpha_j > \beta_j$ such that \[
    \alpha + e_i -e_j \in J \quad \text{and} \quad \beta-e_i +e_j \in J.
    \]
    Since $J$ is $\kappa$-maximal, and because $\alpha+e_i -e_j$ increases $i$ in its $\kappa$-projection, we must be decreasing $j$ in its $\kappa$-projection. Thus $\alpha_j \leq \kappa_j$ and hence $\beta_j < \alpha_j \leq \kappa_j$. Thus, since $\beta -e_i +e_j$ increases $j$ in its $\kappa$-projection, it must be decreasing $i$ in its $\kappa$-projection. Hence $\beta_i \leq \kappa_i$. Since $\alpha_i < \beta_i$, we have $\alpha_i < \kappa_i$. Thus we have $\alpha_i = x_i$ and $\beta_i = y_i$.

    The moreover follows from similar logic.
\end{proof}

\begin{corollary} \label{corollary:fibers_are_translations}
    For any $x, y \in \RetK(J)$, the fibers are translations of each other. Specifically, if $y = x + v$ for some $v \in \mathbb{Z}^n$, then $\kproj^{-1}(y) = \kproj^{-1}(x) + v$.
\end{corollary}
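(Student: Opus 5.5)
The plan is to reduce to the case where $x$ and $y$ differ by a single exchange, and then chain such exchanges together. Since $\RetK(J)$ is M-convex by Theorem~\ref{theorem:retraction_is_polymatroid}, any two elements $x,y\in\RetK(J)$ can be joined by a sequence $x=x^{(0)},x^{(1)},\dots,x^{(m)}=y$ in $\RetK(J)$. Each step has the form $x^{(t+1)}=x^{(t)}+e_i-e_j$ with $x^{(t)}_i<y_i$ and $x^{(t)}_j>y_j$. The step exists by applying (EA) to $x^{(t)}$ and $y$, and it strictly decreases $\sum_k|x^{(t)}_k-y_k|$, so the sequence terminates at $y$. Translations compose, so if each adjacent pair of fibers differs by the translation $e_i-e_j$, then $\kproj^{-1}(y)=\kproj^{-1}(x)+\sum_t(x^{(t+1)}-x^{(t)})=\kproj^{-1}(x)+v$. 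Here the fibers are taken inside $\Jmax$, as in Lemma~\ref{lemma:fibers_Mconvex}.

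For the adjacent case, suppose $x,y\in\RetK(J)$ with $y=x+e_i-e_j$. I first show $\kproj^{-1}(x)+e_i-e_j\subseteq\kproj^{-1}(y)$. Fix $\alpha\in\kproj^{-1}(x)$ and choose any $\beta\in\kproj^{-1}(y)$. Since $x_i<y_i$, Lemma~\ref{lemma:fiber_exchange_like_retraction} gives $\alpha_i=x_i<y_i=\beta_i$. Applying (EA) in $J$, we get some $j'$ with $\alpha_{j'}>\beta_{j'}$ and $\alpha+e_i-e_{j'}\in J$.

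The key step is to force $j'=j$. By the ``moreover'' part of Lemma~\ref{lemma:fiber_exchange_like_retraction}, $\alpha_{j'}=x_{j'}$ and $\beta_{j'}=y_{j'}$, so $x_{j'}>y_{j'}$. But $x$ and $y$ agree outside $\{i,j\}$, and $x_i<y_i$, so necessarily $j'=j$. Hence $\alpha+e_i-e_j\in J$. Its projection is $x+e_i-e_j=y$, because $\alpha_i=x_i<\kappa_i$ and $\alpha_j=x_j\le\kappa_j$. Since $y$ is maximal, $\alpha+e_i-e_j\in\Jmax$ and lies in $\kproj^{-1}(y)$.

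The reverse inclusion is symmetric. We have $x=y+e_j-e_i$ with $y_j<x_j$, so the same argument with the roles of $x,y$ and of $i,j$ swapped gives $\kproj^{-1}(y)+e_j-e_i\subseteq\kproj^{-1}(x)$. The two inclusions together give equality.

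I expect the main obstacle to be exactly the identification $j'=j$. The exchange axiom only produces some index, and the content of the corollary is that this index is pinned down by the retraction. Lemma~\ref{lemma:fiber_exchange_like_retraction} is what makes this work, because it shows that every coordinate involved in the exchange is unclipped by the cage.
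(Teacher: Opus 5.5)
Your proposal is correct and follows essentially the same argument as the paper: reduce to an elementary exchange $y=x+e_i-e_j$ via connectivity of the exchange graph of $\RetK(J)$, then use Lemma~\ref{lemma:fiber_exchange_like_retraction} twice to force the exchanged index to be $j$, check that $\kproj(\alpha+e_i-e_j)=y$, and obtain the reverse inclusion by symmetry. The only differences are small extra details on your side: you construct the exchange path explicitly, and you note that $\alpha+e_i-e_j$ lies in $\Jmax$.
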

\begin{proof}
    By the connectivity of the basis-exchange graph of a polymatroid, it suffices to consider the case of an elementary exchange $y = x + e_i - e_j$. Let $\alpha \in \kproj^{-1}(x)$ and $\beta \in \kproj^{-1}(y)$. By Lemma~\ref{lemma:fiber_exchange_like_retraction}, we have $\alpha_i = x_i$ and $\beta_i = y_i$. By the base exchange property, there exists $k \in [n]$ such that $\alpha_k > \beta_k$ and $\alpha + e_i - e_k \in J$. Applying Lemma~\ref{lemma:fiber_exchange_like_retraction} again implies $\alpha_k = x_k$ and $\beta_k = y_k$, which forces $k = j$ as $x$ and $y$ only differ at $i$ and $j$.

    It follows that $\alpha + e_i - e_j \in J$. Since $x_i < \kappa_i$ and $x_j \leq \kappa_j$, we have $\kproj(\alpha + e_i - e_j) = x + e_i - e_j = y$, thus $\alpha + e_i - e_j \in \kproj^{-1}(y)$. This shows that $\kproj^{-1}(x) + (e_i - e_j) \subseteq \kproj^{-1}(y)$. By symmetry, the reverse inclusion holds, yielding the desired equality $
        \kproj^{-1}(y) = \kproj^{-1}(x) + e_i - e_j.$
\end{proof}

We synthesize the above results into the following theorem. We borrow some of the terminology from fiber bundles to describe the structure. 

\begin{theorem}[The Structure Theorem for Polymatroid Retractions] \label{theorem:structure_theorem_for_mconvex}
    The set $\Jmax$ admits the following decomposition:

    \begin{enumerate}
        \item [\textup{(1)}] (Fiber Decomposition) The set $\Jmax$ is partitioned by the fibers of $\RetK(J)$ under the projection $\kproj \colon \Jmax \rightarrow \RetK(J)$:
        \[
        \Jmax = \bigsqcup_{x\in \RetK(J)} \kproj^{-1}(x).
        \]

        \item [\textup{(2)}](Fiber Maps) For any $x,y \in \RetK(J)$ with $y = x + v$ for some $v \in \mathbb{Z}^n$, the map $\phi\colon \kproj^{-1}(x) \to \kproj^{-1}(y)$ given by $\phi(\alpha) = \alpha + v$ is a bijection.

        \item [\textup{(3)}](Global Sections) For every element $\alpha \in \Jmax$, define $\tau \in \Delta_n^{r-r^*}$ by $\tau = \alpha - \kproj(\alpha)$, where $r^*$ is the rank of $\RetK(J)$. This vector $\tau$ defines a section of $\RetK(J)$: for every element $y \in \RetK(J)$, the element $y + \tau$ is contained in $\Jmax$ and satisfies $\kproj(y+\tau) = y$. 
    \end{enumerate}
\end{theorem}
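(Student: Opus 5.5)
Parts (1) and (2) are essentially packaging of what has already been proved, and part (3) follows from (2) together with a direct computation. I would treat the three parts in order.

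For (1), note that $\RetK(J)=\kproj(\Jmax)$ by definition, so $\kproj\colon \Jmax\to\RetK(J)$ is a well-defined surjection. Its fibers are therefore nonempty, pairwise disjoint, and cover $\Jmax$. One point needs checking: the fiber $\kproj^{-1}(x)$ is meant inside $\Jmax$, and this agrees with the fiber taken inside $J$. Indeed, any $\alpha\in J$ with $\kproj(\alpha)=x$ has a maximal projection, so $\alpha\in\Jmax$. This is the convention already used in Lemma~\ref{lemma:fiber_exchange_like_retraction}.

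For (2), Corollary~\ref{corollary:fibers_are_translations} gives $\kproj^{-1}(y)=\kproj^{-1}(x)+v$. Hence $\phi(\alpha)=\alpha+v$ maps $\kproj^{-1}(x)$ onto $\kproj^{-1}(y)$. It is injective because it is a translation, and its inverse is translation by $-v$.

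For (3), fix $\alpha\in\Jmax$ and set $x=\kproj(\alpha)$, so $\tau=\alpha-x\geq 0$. Since $|x|=r^*$ for every element of the polymatroid $\RetK(J)$ (Theorem~\ref{theorem:retraction_is_polymatroid}), we get $|\tau|=r-r^*$, so $\tau\in\Delta_n^{r-r^*}$. Now take $y\in\RetK(J)$ and write $v=y-x$. By (2), $\alpha+v=y+\tau$ lies in $\kproj^{-1}(y)\subseteq\Jmax$. In particular $\kproj(y+\tau)=y$.

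The only real subtlety is that (2) rests on Corollary~\ref{corollary:fibers_are_translations}, which reduces to elementary exchanges using connectivity of the basis-exchange graph of $\RetK(J)$. I would make this reduction explicit by induction on $\tfrac12\|y-x\|_1$. Given $x\neq y$, choose $i$ with $x_i<y_i$. Symmetric exchange in $\RetK(J)$ gives $j$ with $x_j>y_j$ such that $x'=x+e_i-e_j\in\RetK(J)$, and $x'$ is strictly closer to $y$. The translations $e_i-e_j$ and $y-x'$ then compose to $y-x$. This step is routine, so I expect no serious obstacle.
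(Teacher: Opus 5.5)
Your proposal is correct and follows the paper's proof: (1) comes from $\RetK(J)=\kproj(\Jmax)$, (2) is a restatement of Corollary~\ref{corollary:fibers_are_translations}, and (3) is obtained by applying (2) to $\alpha$ with $v=y-x$. Your additional checks are all valid and make explicit what the paper leaves implicit: that fibers taken in $J$ and in $\Jmax$ coincide, that $|\tau|=r-r^*$, and the induction on $\tfrac12\|y-x\|_1$ that replaces the appeal to connectivity of the exchange graph.
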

\begin{proof}
\leavevmode
\begin{enumerate}
    \item [\textup{(1)}]This follows immediately from the definition of the $\kappa$-projection and the fact that $\kproj(\Jmax) = \RetK(J)$.
    \item [\textup{(2)}]This is simply a restatement of Corollary~\ref{corollary:fibers_are_translations}.
    \item [\textup{(3)}]Let $\alpha \in \Jmax$, let $x = \kproj(\alpha)$, and $\tau = \alpha - x$. Let $y \in \RetK(J)$. Then $y = x + v$ for some $v \in \mathbb{Z}^n$. By (2), we have $\phi(\alpha) = \alpha + v \in \kproj^{-1}(y)$. Lastly, $\alpha + v = x + v + \tau = y+ \tau$, completing the proof.
\end{enumerate} 
\end{proof}
In the remainder of the paper, we frequently consider a section of
\[
    \kproj\colon\Jmax\to\RetK(J)
\]
determined by an element $\alpha\in\Jmax$, via the vector $\tau=\alpha-\kproj(\alpha)$. We call such a vector $\tau$ a \emph{section-defining vector}. The corresponding section is the translated polymatroid $\RetK(J)+\tau$, and it satisfies $\kproj(\RetK(J)+\tau)=\RetK(J)$.

The following lemma is a simple consequence of the structure theorem and will be used later. 
\begin{lemma}\label{lemma:section_vectors_saturate_coordinates}
    Let $\tau$ be a section-defining vector for the projection
    \[
        \kproj\colon \Jmax\to\RetK(J).
    \]
    If $\tau_i>0$, then $x_i=\kappa_i$ for every $x\in\RetK(J)$.
\end{lemma}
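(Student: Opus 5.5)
We will argue directly from part (3) of Theorem~\ref{theorem:structure_theorem_for_mconvex}. Fix a section-defining vector $\tau=\alpha-\kproj(\alpha)$ with $\alpha\in\Jmax$, and suppose $\tau_i>0$. The aim is to show that every $x\in\RetK(J)$ satisfies $x_i=\kappa_i$.

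First, take an arbitrary $x\in\RetK(J)$. By part (3) of the Structure Theorem, the vector $\beta\coloneqq x+\tau$ lies in $\Jmax$ and satisfies $\kproj(\beta)=x$.

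Second, compare coordinates at $i$. We have $\beta_i=x_i+\tau_i>x_i$. On the other hand, $\kproj(\beta)_i=\min(\beta_i,\kappa_i)=x_i$. If $\beta_i\le\kappa_i$, this minimum would equal $\beta_i$, forcing $\beta_i=x_i$, which contradicts $\beta_i>x_i$. Hence $\beta_i>\kappa_i$, and so $x_i=\min(\beta_i,\kappa_i)=\kappa_i$.

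Since $x$ was arbitrary, this proves the claim. There is no real obstacle here. The only point to check is that part (3) applies to every $y\in\RetK(J)$ with the same $\tau$, and this is exactly the global-section statement. Alternatively, one could first observe that $\tau_i>0$ forces $\alpha_i>\kappa_i$, so $\kproj(\alpha)_i=\kappa_i$, and then transport this along the fiber bijections of part (2). However, the direct argument above already handles every $x$ uniformly.
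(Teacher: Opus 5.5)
Your proof is correct and essentially the same as the paper's. Both use the global-section property $\kproj(x+\tau)=x$ for every $x\in\RetK(J)$ and compare the $i$-th coordinate; the paper phrases it as a contradiction from assuming $x_i<\kappa_i$, while you argue directly that $\beta_i>x_i$ forces $\min(\beta_i,\kappa_i)=\kappa_i$.
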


\begin{proof}
    Since $\tau$ is a section vector, we have $\kproj(x+\tau)=x$ for every $x\in\RetK(J)$. Suppose $\tau_i>0$. If $x_i<\kappa_i$, then
    \[
        \kproj(x+\tau)_i>x_i,
    \]
    contradicting $\kproj(x+\tau)=x$. Therefore $x_i=\kappa_i$.
\end{proof}

We now show the retraction is an embedded minor of $J$. First, we need the following lemma.

\begin{lemma}\label{lemma:retraction_of_retraction}
    Suppose $\kappa_1,\kappa_2 \in \Zgeqn$ with $\kappa_1 \leq \kappa_2$. Then, \[\Ret_{\kappa_1}(J) = \Ret_{\kappa_1}\big(\Ret_{\kappa_2}(J)\big).\]
\end{lemma}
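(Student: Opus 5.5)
Since $\kappa_1\le\kappa_2$, projections compose: $\proj_{\kappa_1}\circ\proj_{\kappa_2}=\proj_{\kappa_1}$, because $(v\wedge\kappa_2)\wedge\kappa_1=v\wedge\kappa_1$. Write $K=\Ret_{\kappa_2}(J)=\proj_{\kappa_2}(J^{\max_{\kappa_2}})$. By Theorem~\ref{theorem:retraction_is_polymatroid}, $K$ is M-convex, so $\Ret_{\kappa_1}(K)$ makes sense. My plan is to compare the two sides through the set $\proj_{\kappa_1}(K)=\proj_{\kappa_1}(J^{\max_{\kappa_2}})\subseteq \proj_{\kappa_1}(J)$. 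By definition, $\Ret_{\kappa_1}(J)$ is the set of coordinatewise maximal elements of $\proj_{\kappa_1}(J)$, and $\Ret_{\kappa_1}(K)$ is the set of maximal elements of $\proj_{\kappa_1}(J^{\max_{\kappa_2}})$. By Corollary~\ref{corollary:Jmax_size_description}, applied to both $J$ and $K$, these are exactly the elements of maximal size in the respective sets. It therefore suffices to show two things: the maximal size $m$ of $|\proj_{\kappa_1}(\alpha)|$ over $\alpha\in J$ is attained by some $\alpha\in J^{\max_{\kappa_2}}$, and every $\alpha\in J$ attaining $m$ has its $\kappa_1$-projection realized by some element of $J^{\max_{\kappa_2}}$. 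Given these, the two sets of maximum-size projections coincide.

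The key step is the following exchange claim. Suppose $\alpha\in J$ attains $|\proj_{\kappa_1}(\alpha)|=m$ but $\alpha\notin J^{\max_{\kappa_2}}$. Then there is $\alpha'\in J$ with $\proj_{\kappa_1}(\alpha')=\proj_{\kappa_1}(\alpha)$ and $|\proj_{\kappa_2}(\alpha')|>|\proj_{\kappa_2}(\alpha)|$. Iterating this terminates, since $|\proj_{\kappa_2}|$ is bounded, and it lands in $J^{\max_{\kappa_2}}$ with the same $\kappa_1$-projection. This handles both requirements at once.

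To prove the claim, pick $\beta\in J^{\max_{\kappa_2}}$. Since $|\proj_{\kappa_2}(\beta)|>|\proj_{\kappa_2}(\alpha)|$, there is an index $i$ with $\proj_{\kappa_2}(\alpha)_i<\proj_{\kappa_2}(\beta)_i$. For such $i$ we have $\alpha_i<\beta_i$ and $\alpha_i<\kappa_{2,i}$. Apply (EA) to get $j$ with $\alpha_j>\beta_j$ and $\alpha'=\alpha+e_i-e_j\in J$.

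I then do a case analysis on the $\kappa_1$- and $\kappa_2$-projections, in the style of Lemma~\ref{lemma:Jmax_is_Mconvex}.
\begin{itemize}
\item If $\alpha_i<\kappa_{1,i}$, then the $\kappa_1$-maximality of $\alpha$ forces $\alpha_j\le\kappa_{1,j}\le\kappa_{2,j}$. Then $\alpha'$ has the same $\kappa_2$-size, so no progress is made. This is the obstacle.
\item To avoid it, choose $i$ more cleverly: among indices with $\proj_{\kappa_2}(\alpha)_i<\proj_{\kappa_2}(\beta)_i$, prefer one with $\alpha_i\ge\kappa_{1,i}$. Then $\proj_{\kappa_1}$ does not change at $i$, and the $\kappa_2$-size increases at $i$. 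By $\kappa_1$-maximality, the coordinate $j$ must satisfy $\alpha_j>\kappa_{1,j}$, so $\proj_{\kappa_1}(\alpha')=\proj_{\kappa_1}(\alpha)$. If in addition $\alpha_j>\kappa_{2,j}$, we gain strictly in $\kappa_2$-size. If not, the $\kappa_2$-size is unchanged, and I would run a potential argument, for instance decreasing $\sum_k|\alpha_k-\beta_k|$, which drops by $2$ at each step.
\item If every such $i$ has $\alpha_i<\kappa_{1,i}$, I would choose $\beta$ to also be $\kappa_1$-maximal with $\proj_{\kappa_1}(\beta)=\proj_{\kappa_1}(\alpha)$, which is possible once we know $m$ is attained in $J^{\max_{\kappa_2}}$. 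Then no such $i$ exists, a contradiction.
\end{itemize}

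The circularity in the last case is exactly the delicate point. I expect to resolve it by first proving that $m$ is attained in $J^{\max_{\kappa_2}}$, using the analogous exchange with the roles reversed: start from $\beta\in J^{\max_{\kappa_2}}$ and improve $|\proj_{\kappa_1}|$ while preserving $\kappa_2$-maximality, via Lemma~\ref{lemma:Jmax_is_Mconvex}. Then I apply the distance-decreasing exchange toward a fixed target.
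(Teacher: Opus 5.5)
Your reduction via Corollary~\ref{corollary:Jmax_size_description} is sound: it suffices to show that every $\alpha\in J$ with $|\proj_{\kappa_1}(\alpha)|=m$ has its $\kappa_1$-projection realized by some element of $J^{\max_{\kappa_2}}$. The exchange argument you give for this, however, does not close.

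In your second case, $\kappa_1$-maximality of $\alpha$ does \emph{not} force $\alpha_j>\kappa_{1,j}$. Maximality only forbids the $\kappa_1$-projection from going up; a drop at $j$ is allowed. Take $J=\Delta_3^3$, $\kappa_1=(0,1,1)$, $\kappa_2=(2,1,1)$, $\alpha=(0,2,1)$, $\beta=(2,1,0)$. The only admissible index is $i=1$, and $\alpha_1\ge\kappa_{1,1}$ holds. Yet (EA) may return $j=3$, giving $\alpha'=(1,2,0)$. Its $\kappa_1$-projection $(0,1,0)$ has dropped, and its $\kappa_2$-size is still $2$.

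Your third case is genuinely circular. Knowing that $m$ is attained somewhere in $J^{\max_{\kappa_2}}$ only gives a $\beta$ of $\kappa_1$-size $m$. It does not give one with $\proj_{\kappa_1}(\beta)=\proj_{\kappa_1}(\alpha)$, and producing such a $\beta$ is exactly the claim you are trying to prove.

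The missing idea is monotonicity, which makes exchange arguments unnecessary. Let $\alpha$ be such that $\proj_{\kappa_1}(\alpha)$ is maximal in $\proj_{\kappa_1}(J)$. Take $\beta\in J^{\max_{\kappa_2}}$ with $\proj_{\kappa_2}(\alpha)\le\proj_{\kappa_2}(\beta)$; it exists because the finite poset $\proj_{\kappa_2}(J)$ has a maximal element above $\proj_{\kappa_2}(\alpha)$. Apply the order-preserving map $\proj_{\kappa_1}$ and use $\proj_{\kappa_1}\circ\proj_{\kappa_2}=\proj_{\kappa_1}$. This gives $\proj_{\kappa_1}(\alpha)\le\proj_{\kappa_1}(\beta)$, hence equality by maximality. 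So your claim holds in one step, with $\alpha'=\beta$.

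This shows that every maximal element of $\proj_{\kappa_1}(J)$ lies in the subset $\proj_{\kappa_1}(J^{\max_{\kappa_2}})$, and is therefore maximal there. Conversely, an element maximal in the subset is maximal in $\proj_{\kappa_1}(J)$. Anything above it lies below some maximal element of $\proj_{\kappa_1}(J)$, which belongs to the subset. This is the paper's proof, and it uses neither (EA) nor the size characterization.
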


\begin{proof}
Let $\alpha\in\Zgeqn$ be arbitrary. Since $\kappa_1\leq \kappa_2$, we have
\[
\proj_{\kappa_1}(\alpha)=\proj_{\kappa_1}(\proj_{\kappa_2}(\alpha)).
\] 
Hence
\[
\Ret_{\kappa_1}(\Ret_{\kappa_2}(J))
=
\left(\proj_{\kappa_1}(J^{\max_{\kappa_2}})\right)^{\max_{\kappa_1}}.
\]

We first show that $\Ret_{\kappa_1}(J)\subseteq \Ret_{\kappa_1}(\Ret_{\kappa_2}(J))$. Let $x\in \Ret_{\kappa_1}(J)$, and choose $\alpha\in J^{\max_{\kappa_1}}$ such that $\proj_{\kappa_1}(\alpha)=x$. Choose $\beta \in J^{\max_{\kappa_2}}$ such that $\proj_{\kappa_2}(\alpha) \leq \proj_{\kappa_2}(\beta)$.

Applying $\proj_{\kappa_1}$ gives $x=\proj_{\kappa_1}(\alpha)\leq \proj_{\kappa_1}(\beta)$. Since $x=\proj_{\kappa_1}(\alpha)$ is maximal in $\proj_{\kappa_1}(J)$, this inequality must be equality. Thus $\proj_{\kappa_1}(\beta)=x$. Since $\beta\in J^{\max_{\kappa_2}}$, this shows that $x\in \proj_{\kappa_1}(J^{\max_{\kappa_2}})$. Moreover, $x$ is maximal in $\proj_{\kappa_1}(J)$, so it is maximal in the subset $\proj_{\kappa_1}(J^{\max_{\kappa_2}})$. Therefore 
\[
x\in \left(\proj_{\kappa_1}(J^{\max_{\kappa_2}})\right)^{\max_{\kappa_1}}=\Ret_{\kappa_1}(\Ret_{\kappa_2}(J)).
\]

Conversely, let $x\in \Ret_{\kappa_1}(\Ret_{\kappa_2}(J))$. Then $x\in \proj_{\kappa_1}(J^{\max_{\kappa_2}})\subseteq \proj_{\kappa_1}(J)$. Since $x$ is maximal in $\proj_{\kappa_1}(J^{\max_{\kappa_2}})$, and since the previous paragraph shows that every maximal element of $\proj_{\kappa_1}(J)$ lies in $\proj_{\kappa_1}(J^{\max_{\kappa_2}})$, it follows that $x$ is maximal in $\proj_{\kappa_1}(J)$. Hence $x\in \Ret_{\kappa_1}(J)$.

Thus $\Ret_{\kappa_1}(\Ret_{\kappa_2}(J))=\Ret_{\kappa_1}(J)$.
\end{proof}

Lemma~\ref{lemma:retraction_of_retraction} allows us to construct any $\kappa$-retraction by a sequence of ``elementary'' retractions, where at each step we retract to $\delplus - e_i$ for some choice of $i \in [n]$. By repeating this process, we eventually arrive at the $\kappa$-retraction.

\begin{remark}\label{remark:kappa_smaller_than_delplus}
    If $\Bar{\kappa}\coloneqq\kappa\wedge\delplus$, then
    $\kproj(\alpha)=\proj_{\Bar{\kappa}}(\alpha)$ for every $\alpha\in J$ and
    \[
        \RetK(J)=\Ret_{\Bar{\kappa}}(J).
    \]
    Thus, when studying the retraction of $J$, it suffices to consider
    $\Bar{\kappa}$ rather than $\kappa$.
\end{remark}

For the remainder of this section, we assume that $\kappa\leq\delplus$.

\begin{theorem} \label{theorem:retraction_is_embedded_minor}
    The polymatroid $\RetK(J)$ is an embedded minor of $J$.
\end{theorem}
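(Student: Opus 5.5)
The plan is to reduce to elementary retractions using Lemma~\ref{lemma:retraction_of_retraction}, check that each elementary retraction is a single deletion or a single contraction, and then assemble the steps into one embedded minor $(J\setminus\nu)/\mu+\tau$ with $\tau=0$.

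\textbf{Reduction.} Since $\kappa\leq\delplus$, Lemma~\ref{lemma:retraction_of_retraction} gives a chain of cages
\[
\delplus=\kappa_0\geq\kappa_1\geq\cdots\geq\kappa_m=\kappa .
\]
Here $\kappa_{t+1}=\delta^+_{J_t}-e_{i_t}$ with $J_t=\Ret_{\kappa_t}(J)$, and Remark~\ref{remark:kappa_smaller_than_delplus} lets us replace each cage by its meet with the current minimal cage. Then $J_{t+1}=\Ret_{\kappa_{t+1}}(J_t)$ and $J_m=\RetK(J)$. So it suffices to treat $\kappa=\delplus-e_i$ with $(\delplus)_i\geq 1$, and then to show the steps compose.

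\textbf{Elementary step.} For $\kappa=\delplus-e_i$, the projection $\kproj$ changes only coordinate $i$. It lowers $\alpha_i$ by one exactly when $\alpha_i=(\delplus)_i$. By Corollary~\ref{corollary:Jmax_size_description}, $\Jmax$ is the set of $\alpha\in J$ where $|\kproj(\alpha)|$ is maximal. There are two cases.
\begin{enumerate}
\item[(a)] Some $\alpha\in J$ has $\alpha_i<(\delplus)_i$. Then the maximal size is $r$, so $\Jmax=\{\alpha\in J\mid \alpha\leq\delplus-e_i\}$. On this set $\kproj$ is the identity, so $\RetK(J)=J\setminus e_i$. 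The vector $e_i$ is effectively coindependent precisely because such an $\alpha$ exists.
\item[(b)] Every $\alpha\in J$ has $\alpha_i=(\delplus)_i$. Then $(\delminus)_i=(\delplus)_i\geq1$, so $e_i+\delminus\leq\alpha$ for all $\alpha\in J$. Hence $J/e_i=J-e_i=\kproj(J)$, and every element of this set is maximal. So $\RetK(J)=J/e_i$.
\end{enumerate}

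\textbf{Assembly.} A chain of single-coordinate deletions and contractions must be rewritten as $(J\setminus\nu)/\mu$, and I expect this to be the main obstacle. The identities $J\setminus(\nu_1+\nu_2)=(J\setminus\nu_1)\setminus\nu_2$ and $J/(\mu_1+\mu_2)=(J/\mu_1)/\mu_2$ only merge consecutive steps of the same type. So I also need to move every deletion in front of every contraction.

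Contraction in case (b) is a plain translation $J\mapsto J-e_i$ applied to the whole set. A later deletion $\setminus e_j$ acts through $\delta^+$, and Remark~\ref{remark:embedded_minor_cage} shows $\delta^+_{J-e_i}=\delplus-e_i$. Therefore
\[
(J/e_i)\setminus e_j=(J\setminus e_j)/e_i .
\]
To justify the right-hand side, I would check that coordinate $i$ remains constant and positive on $J\setminus e_j$. This holds because deletion only removes bases, and in case (b) every base of $J$ has $\alpha_i=(\delplus)_i\geq 1$. Hence $e_i$ is still effectively independent there, and it equals $\delta^-$ in coordinate $i$.

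Commuting all contractions to the end then gives
\[
\RetK(J)=(J\setminus\nu)/\mu ,
\]
where $\nu$ is the sum of the deletion directions and $\mu$ is the sum of the contraction directions.

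\textbf{Verification.} It remains to check the compatibility condition in Definition~\ref{def:embedded_minor}: there must exist $\alpha\in J$ with $\mu+\delta^-_{J\setminus\nu}\leq\alpha\leq\delplus-\nu$. Any element of $J\setminus\nu$ works, because $J\setminus\nu$ is nonempty and $\mu$ is dominated by $\delta^-_{J\setminus\nu}$ in the contracted coordinates. Note that it is $\mu\leq\delta^-_{J\setminus\nu}$ that provides this, not $\mu\leq\alpha-\delta^-$.

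As a consistency check, the resulting embedding $\gamma\mapsto\gamma+\mu$ should agree with the section of Theorem~\ref{theorem:structure_theorem_for_mconvex}(3), since $\mu$ is a section-defining vector. If the commutation bookkeeping becomes awkward, a fallback is to use that section directly. One would take $\mu=\tau$ and $\nu=\delplus-\kappa-\tau$, and verify
\[
J\setminus\nu=\Jmax
\]
using Lemma~\ref{lemma:section_vectors_saturate_coordinates}.
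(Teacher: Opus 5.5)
Your reduction and case (a) are fine and match the paper, but case (b) is wrong, and the error carries through the rest of the proposal. In this paper, contraction is measured relative to $\delminus$: $J/e_i=\{\alpha-e_i\mid \alpha\in J,\ e_i+\delminus\leq\alpha\}$. In case (b) every $\alpha\in J$ has $\alpha_i=(\delminus)_i$, so $(e_i+\delminus)_i=\alpha_i+1>\alpha_i$ for every basis. Thus $e_i$ is \emph{not} effectively independent, and $J/e_i=\varnothing$ rather than $J-e_i$. For instance, with $J=\{(3,0),(2,1),(1,2)\}$ and $\kappa=\ones$, the last elementary step along either path in the paper's example is of this type, and $\{(1,2)\}/e_2=\varnothing$.

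The same confusion appears in your verification step. The embedded-minor condition is precisely $\mu\leq\alpha-\delta^-_{J\setminus\nu}$. Every coordinate in the support of your $\mu$ is constant on $J\setminus\nu$, so this fails for every $\alpha$, and $(J\setminus\nu)/\mu$ is empty whenever $\mu\neq 0$. Your fallback has the same defect: by Lemma~\ref{lemma:section_vectors_saturate_coordinates}, a section vector $\tau$ is supported on coordinates that are constant on $J\setminus\nu=\RetK(J)+\tau$. Its claim $J\setminus\nu=\Jmax$ is also false, because $J\setminus\nu$ is a single section while $\Jmax$ is the union of all sections. In the example above with $\tau=(1,0)$, one gets $\nu=(1,1)$ and $J\setminus\nu=\{(2,1)\}$, whereas $\Jmax=\{(2,1),(1,2)\}$.

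The fix is to use translation, as the paper does. In case (b) one has $(\delminus)_i=(\delplus)_i\geq 1$, so $-e_i\geq-\delminus$, and $\RetK(J)=J+(-e_i)$ is an admissible translation. Your commutation argument then goes through verbatim with translation in place of contraction; it is the identity $(J+\tau)\setminus\nu=(J\setminus\nu)+\tau$ from \cite{baker2025representationtheorypolymatroids}. The assembled minor is then $\RetK(J)=(J\setminus\nu)+(-\tau)$ with $\mu=0$, which is the form recorded in Corollary~\ref{corollary:description_of_embedded_minor}. The translation is admissible because $\delta^-_{J\setminus\nu}=\delta^-_{\RetK(J)}+\tau\geq\tau$. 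With that correction, your explicit assembly is a more detailed substitute for the paper's one-line appeal to closure of embedded minors under composition.
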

\begin{proof}
    Using Lemma~\ref{lemma:retraction_of_retraction} and because embedded minors are closed under composition, it suffices to consider the case of $\kappa = \delplus - e_i$ for some fixed $i \in [n]$. Because $\kappa \in \Zgeqn$, we must additionally assume $(\delplus)_i \geq 1$. We then have two cases to consider:
    \begin{enumerate}[label=\textit{Case \arabic*:}, leftmargin=*, align=left]
        \item If $(\delminus)_i < (\delplus)_i$, then an element $\alpha \in J$ is $\kappa$-maximal if and only if $\alpha_i < (\delplus)_i$. From this, it is easy to see that $\RetK(J) = J \setminus e_i$.
        \item If $(\delminus)_i = (\delplus)_i$, then all elements of $J$ are $\kappa$-maximal, thus $\RetK(J) = J - e_i$.
    \end{enumerate}
\end{proof}
At first glance, this approach seems to provide a canonical way to construct $\RetK(J)$ as an embedded minor of $J$. However, the choice of $i$ at each step is not canonical, and different choices may yield different sequences of embedded minor operations. The following example illustrates this phenomenon.

\begin{example}
Consider the polymatroid 
\[
J = \{(3,0), (2,1), (1,2)\}.
\] 
The coordinatewise maximum vector is $\delplus = (3,2)$. Let us calculate $\Ret_{\ones}(J)$ in two different ways. Notice how the intermediate sets differ depending on the path we take from $(3,2)$ to $(1,1)$:

\begin{itemize}
    \item 
    Retracting to $(2,2)$ yields $\{(2,1), (1,2)\}$. A further retraction to $(1,2)$ isolates $\{(1,2)\}$. Finally, retracting to $(1,1)$ yields $\Ret_{\ones}(J) = \{(1,1)\}$.
    
    \item 
    Retracting to $(3,1)$ yields $\{(3,0), (2,1)\}$. A further retraction to $(2,1)$ isolates $\{(2,1)\}$. Finally, retracting to $(1,1)$ yields $\Ret_{\ones}(J) = \{(1,1)\}$.
\end{itemize}

While both paths give the same matroid $\Ret_{\ones}(J)$, the sequences of intermediate polymatroids are distinct. Notably, the two cases correspond to the two global sections of $\Jmax$, which are $\{(1,2)\}$ and $\{(2,1)\}$ respectively.
\end{example}
This motivates us to characterize all embeddings of $\RetK(J)$ into $J$ through this process of elementary retractions. Each embedding through elementary retractions corresponds to a sequence $(e_{i_1},\dots,e_{i_k})$ with \[
\sum_{m=1}^k e_{i_m} = \delplus - \kappa.
\]
In \cite{baker2025representationtheorypolymatroids}, they show $(J + \tau) \setminus \nu = (J \setminus \nu) + \tau$, which is to say we may perform translation after deletions. Thus we may assume the sequence always begins with only deletions and ends with only translations.

Additionally, the order in which we perform the deletions or the translations should not matter. This leads us to the following result.
\begin{theorem} \label{theorem:sections_and_embedded_minors}
    Fix $\kappa \leq \delplus$. Then there is a bijection between the following two sets:
    \begin{itemize}        
        \item The set of embeddings of $\RetK(J)$ into $J$ through elementary retractions starting with deletions and ending with translations, up to reordering of the deletions and translations.
        \item The set of global sections of the map $\kproj:\Jmax \to \RetK(J)$.
    \end{itemize}
    The bijection is given by sending a sequence $(e_{i_1},\dots,e_{i_k})$ with translations given by $(e_{i_l},\dots,e_{i_k})$ to the global section defined by $\tau = \sum_{m=l}^k e_{i_m}$.
\end{theorem}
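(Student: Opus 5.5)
Since every embedding in the first set is, up to reordering, determined by the multiset of coordinates used for translations, I will encode such an embedding by a pair of vectors $(\nu,\tau)\in\Zgeqn\times\Zgeqn$ with $\nu+\tau=\delplus-\kappa$. Here $\nu=\sum_{m<l}e_{i_m}$ records the deletions and $\tau=\sum_{m\geq l}e_{i_m}$ records the translations. Reordering within the deletions or within the translations preserves $(\nu,\tau)$. Conversely, $(\nu,\tau)$ determines the sequence up to such reorderings, so the map to $\tau$ is well defined and injective once $\nu=\delplus-\kappa-\tau$ is forced. It therefore remains to show two things. First, for every valid sequence of elementary retractions, the resulting $\tau$ is a section-defining vector. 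Second, every section-defining vector $\tau$ arises from some valid sequence, that is, one in which each step is genuinely a deletion or translation as in Case 1 or Case 2 of Theorem~\ref{theorem:retraction_is_embedded_minor}.

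For the first direction, I track the intermediate polymatroids. By Lemma~\ref{lemma:retraction_of_retraction}, after the deletions we have the polymatroid $J'=J\setminus\nu=\Ret_{\delplus-\nu}(J)$, with minimal cage $\delplus-\nu$ by Remark~\ref{remark:embedded_minor_cage}. The translations are valid precisely when each step is in Case 2, that is, when the current coordinate is constant. After all translations we obtain $J'-\tau=\RetK(J)$. Hence $J'=\RetK(J)+\tau\subseteq J$, and $\kproj(\RetK(J)+\tau)=\RetK(J)$ because this equals $J'-\tau$ and the translated coordinates sit at the cage. These elements are then $\kappa$-maximal, so they lie in $\Jmax$. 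Choosing any $\alpha\in J'$ gives $\alpha-\kproj(\alpha)=\tau$, so $\tau$ is section-defining by Theorem~\ref{theorem:structure_theorem_for_mconvex}(3).

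For the converse, I start from a section-defining $\tau$ and set $\nu=\delplus-\kappa-\tau$. The first check is $\nu\geq0$: if $\tau_i>0$, take $x\in\RetK(J)$, so $x+\tau\in J$ and thus $\kappa_i+\tau_i\leq(\delplus)_i$. I then aim to show $\Ret_{\kappa+\tau}(J)=\RetK(J)+\tau$. By Lemma~\ref{lemma:section_vectors_saturate_coordinates}, every $x\in\RetK(J)$ has $x_i=\kappa_i$ wherever $\tau_i>0$. So $\RetK(J)+\tau$ is constant in those coordinates, and translating by $-e_i$ one coordinate at a time is a legitimate sequence of Case 2 steps ending at $\RetK(J)$. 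For the deletion part, I would perform elementary retractions from $\delplus$ down to $\kappa+\tau$ and show that each is a valid step. Since retractions always correspond to either a deletion or a translation, the real content is to show that the composite equals $\RetK(J)+\tau$ and that every step along the way is a deletion rather than a translation. For this, note that elements of $\RetK(J)+\tau$ are $(\kappa+\tau)$-caged elements of $J$ of maximal size $r$, so $\Ret_{\kappa+\tau}(J)$ has rank $r$. Hence $\Ret_{\kappa+\tau}(J)=J\cap\{\alpha\leq\kappa+\tau\}$, which is exactly a pure deletion. Any intermediate elementary step is then also rank-preserving, so it falls in Case 1.

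The main obstacle is to establish the equality $J\cap\{\alpha\leq\kappa+\tau\}=\RetK(J)+\tau$, in particular the inclusion $\subseteq$. Let $\beta\in J$ with $\beta\leq\kappa+\tau$. Then $\kproj(\beta)\geq\beta-\tau$, so $|\kproj(\beta)|\geq r-|\tau|=r^*$, and therefore $\beta\in\Jmax$ by Corollary~\ref{corollary:Jmax_size_description}. Equality in that count forces $\beta-\kproj(\beta)=\tau$, and Theorem~\ref{theorem:structure_theorem_for_mconvex} then gives $\beta\in\RetK(J)+\tau$. Finally, the two constructions are inverse to each other because each is determined by $\tau$.
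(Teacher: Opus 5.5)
Your argument is correct, but the converse direction takes a different route from the paper.

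\textbf{The paper's converse.} Given a section vector $\tau=\alpha-\kproj(\alpha)$, the paper uses a briefly sketched induction (performing all of $\nu$ first) to reduce to two coordinatewise checks. First, if $\nu_i>0$ then $(\delminus)_i<(\delplus)_i$, which follows from $\alpha_i\le\kappa_i+\tau_i<(\delplus)_i$. Second, if $\nu=0$ and $\tau_i>0$ then $(\delminus)_i=(\delplus)_i$, which it proves by a base-exchange argument using the $\kappa$-maximality of $\alpha$.

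\textbf{Your converse.} You instead identify the intermediate polymatroid globally. The bound $\kproj(\beta)\ge\beta-\tau$ for $\beta\le\kappa+\tau$ gives $|\kproj(\beta)|\ge r-|\tau|=r^*$. Hence $\Ret_{\kappa+\tau}(J)=J\cap\{\beta\le\kappa+\tau\}=\RetK(J)+\tau$. So every deletion step along any path from $\delplus$ to $\kappa+\tau$ preserves rank and is therefore Case 1. Lemma~\ref{lemma:section_vectors_saturate_coordinates} then makes each translated coordinate constant, so every translation step is Case 2.

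\textbf{What each buys.} Your version replaces the paper's implicit induction with an explicit invariant that holds for every ordering of the steps. It also yields Corollary~\ref{corollary:description_of_embedded_minor} as a byproduct. The paper's version is more local: it needs only the single element $\alpha$ and one exchange, rather than Corollary~\ref{corollary:Jmax_size_description} and Lemma~\ref{lemma:section_vectors_saturate_coordinates}.

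\textbf{Forward direction.} Here the difference is minor. The paper forces $\tau'=\tau$ by comparing the cardinalities of two sections, while you read off $\kproj(x+\tau)=x$ directly.

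\textbf{Two terse steps.} When you say the translated coordinates sit at the cage, spell out the justification. Each Case~2 step leaves its coordinate constant at the new cage value, and the path ends at $\kappa$, so $x_i=\kappa_i$ whenever $\tau_i>0$. Likewise, your check $\nu\ge 0$ silently uses $x_i=\kappa_i$; cite Lemma~\ref{lemma:section_vectors_saturate_coordinates} at that point.
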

\begin{proof}
    Let $(e_{i_1}, \dots, e_{i_k})$ be a sequence of elementary retractions with deletions given by $(e_{i_1}, \dots, e_{i_{l-1}})$ and translations given by $(e_{i_l}, \dots, e_{i_k})$. Let $\nu = \sum_{m=1}^{l-1}e_{i_m}$ and $\tau = \sum_{m=l}^k e_{i_m}$ be the total deletion and total translation vectors, respectively, which are invariant up to reordering the deletions or translations. Then by construction we have 
    \[
    \RetK(J) = (J\setminus \nu)-\tau \quad \text{or equivalently} \quad \RetK(J) + \tau = J\setminus \nu,
    \]
    as well as $\RetK(J \setminus \nu) = \RetK(J)$. Thus we have $\RetK(J \setminus \nu) + \tau = J \setminus \nu$. It follows that all elements of $J \setminus \nu$ must be $\kappa$-maximal in $J \setminus \nu$. Because $\RetK(J) = \RetK(J \setminus \nu)$, we see that $J \setminus \nu \subseteq \Jmax$.
    
    Let $\alpha \in J \setminus \nu \subseteq \Jmax$, and let $\tau' = \alpha - \kproj(\alpha)$. By the Structure Theorem, $\RetK(J \setminus \nu) + \tau' \subseteq J \setminus \nu = \RetK(J\setminus \nu) + \tau$. Since these are finite sets of the same cardinality, containment implies equality, which forces $\tau = \tau'$. This shows that $\tau$ defines a global section of $\Jmax$. 

    \smallskip
    Conversely, let $\alpha \in \Jmax$ and let $\tau = \alpha - \kproj(\alpha)$ define a global section of $\Jmax$. Set $\nu = \delplus - \kappa - \tau$, and choose a coordinate step sequence $(e_{i_1}, \dots, e_{i_k})$ such that 
    \[
    \nu = \sum_{m = 1}^{l-1}e_{i_m} \quad \text{and}\quad \tau = \sum_{m = l}^k e_{i_m}.
    \]
    Because $\nu + \tau = \delplus - \kappa$, the sequence $(e_{i_1}, \dots, e_{i_{l-1}}, e_{i_l}, \dots, e_{i_k})$ yields an embedding of $\RetK(J)$ by elementary retractions, which is well-defined up to reordering of the deletions and translations. It suffices to show that the elementary retractions $(e_{i_1}, \dots, e_{i_{l-1}})$ are deletions and $(e_{i_l}, \dots, e_{i_k})$ are translations.

    By an inductive argument where we perform all of $\nu$ first, alongside the characterization in Theorem~\ref{theorem:retraction_is_embedded_minor}, this verification reduces to showing the following two statements:
    \begin{itemize}
        \item If $\nu_i > 0$, then $(\delminus)_i < (\delplus)_i$.
        \item If $\nu = 0$ and $\tau_i > 0$, then $(\delminus)_i = (\delplus)_i$.
    \end{itemize}
    The first statement follows directly, as we have: 
    \[
    \alpha_i = (\kproj(\alpha) + \tau)_i \leq \kappa_i + \tau_i = (\delplus)_i - \nu_i < (\delplus)_i,
    \]
    which implies $\alpha_i < (\delplus)_i$ whenever $\nu_i > 0$.

    For the second statement, suppose $\nu = 0$ and $\tau_i > 0$. Notice that $\alpha_i = \tau_i + \kproj(\alpha)_i$, which implies $\alpha_i > \kproj(\alpha)_i$. This forces $\kproj(\alpha)_i = \kappa_i$. Thus, we have $\alpha_i = \kappa_i + \tau_i = (\delplus)_i$. For contradiction, suppose there exists $\beta \in J$ with $\beta_i < (\delplus)_i$. We apply the base exchange property to find an index $j \in [n]$ with $\beta_j > \alpha_j$ such that $\alpha - e_i + e_j \in J$. This implies: 
    \[
    \kproj(\alpha - e_i + e_j) = \begin{cases}
        \kproj(\alpha), & \alpha_j \geq \kappa_j, \\
        \kproj(\alpha) + e_j, & \alpha_j < \kappa_j.
    \end{cases}
    \]
    Because $\alpha$ is $\kappa$-maximal, we must have $\alpha_j \geq \kappa_j$. But then we again have $\alpha_j = \kappa_j + \tau_j = (\delplus)_j$, contradicting the fact that $\beta_j > \alpha_j$. Thus we must have $\beta_i = \alpha_i = (\delplus)_i$, meaning $(\delminus)_i = (\delplus)_i$.
\end{proof}

As a consequence of this theorem, we have the following description of the retraction as an embedded minor.
\begin{corollary} \label{corollary:description_of_embedded_minor}
    For any $\alpha \in \Jmax$, with $\tau = \alpha - \kproj(\alpha)$ and $\nu = \delplus - \kappa - \tau$, we have \[
    \RetK(J) = J \setminus \nu - \tau.
    \]
\end{corollary}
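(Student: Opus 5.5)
The corollary should follow from the converse direction of Theorem~\ref{theorem:sections_and_embedded_minors}, together with the identity $\RetK(J)+\tau = J\setminus\nu$ established in its first half. Fix $\alpha\in\Jmax$ and set $\tau=\alpha-\kproj(\alpha)$. By the Structure Theorem~\ref{theorem:structure_theorem_for_mconvex}(3), $\tau$ is a section-defining vector, and $\nu=\delplus-\kappa-\tau$.

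We first check that $\nu\geq 0$. If $\tau_i=0$, then $\nu_i=(\delplus)_i-\kappa_i\geq 0$, since we assume $\kappa\le\delplus$. If $\tau_i>0$, then $\kproj(\alpha)_i=\kappa_i$, so $\alpha_i=\kappa_i+\tau_i\le(\delplus)_i$, and hence $\nu_i\ge0$.

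The converse part of Theorem~\ref{theorem:sections_and_embedded_minors} then produces a sequence of elementary retractions in which the steps summing to $\nu$ are all deletions and the steps summing to $\tau$ are all translations. By Lemma~\ref{lemma:retraction_of_retraction} and the case analysis in Theorem~\ref{theorem:retraction_is_embedded_minor}, running through this sequence computes $\RetK(J)$. Deletions compose additively, $J\setminus(\nu_1+\nu_2)=(J\setminus\nu_1)\setminus\nu_2$, and translations compose additively as well. So the composite operation is $J\mapsto (J\setminus\nu)-\tau$, and we get $\RetK(J)=J\setminus\nu-\tau$.

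The step that needs the most care is justifying that performing all deletions first is legitimate. At each intermediate stage, the coordinate being reduced must still satisfy $(\delminus)_i<(\delplus)_i$ for the \emph{current} set, and the minimal cage must evolve as $\delta^+_{J\setminus\nu'}=\delplus-\nu'$ (Remark~\ref{remark:embedded_minor_cage}). The proof of Theorem~\ref{theorem:sections_and_embedded_minors} handles this inductively. To be safe, I would also give a direct verification of $J\setminus\nu=\RetK(J)+\tau$:
\begin{itemize}
\item[] \emph{($\supseteq$)} For $y\in\RetK(J)$, the Structure Theorem gives $y+\tau\in\Jmax\subseteq J$, and $y+\tau\le\kappa+\tau=\delplus-\nu$. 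So $y+\tau\in J\setminus\nu$.
\item[] \emph{($\subseteq$)} Let $\beta\in J$ with $\beta\le\delplus-\nu=\kappa+\tau$. If $\beta$ is $\kappa$-maximal, write $x=\kproj(\beta)$. Then $\beta-x$ is supported where $x=\kappa$, and there $\beta-x\le\tau$. Since $|\beta-x|=|\tau|$ by Corollary~\ref{corollary:Jmax_size_description} and the rank count, we get $\beta=x+\tau$.
\end{itemize}

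The remaining point is to show that such $\beta$ is automatically $\kappa$-maximal, and I expect this to be the main obstacle. Since $\beta\le\kappa+\tau$,
\[
|\kproj(\beta)|\ \ge\ |\beta|-|\tau| \;=\; r-(r-r^*) \;=\; r^*,
\]
so $|\kproj(\beta)|=r^*$ is maximal and $\beta\in\Jmax$ by Corollary~\ref{corollary:Jmax_size_description}. This closes the argument.
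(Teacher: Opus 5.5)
Your proposal is correct. Its first half follows the paper's route. The paper gives no separate proof and reads the corollary off from the converse direction of Theorem~\ref{theorem:sections_and_embedded_minors}. There, a section vector $\tau$ and $\nu=\delplus-\kappa-\tau$ are realized by a chain of elementary retractions (deletions first, then translations), and the intermediate-stage bookkeeping you worry about is handled by the induction in that proof.

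Your direct verification is a genuinely different and more elementary argument, and it is complete on its own:
\begin{itemize}
\item The bound $\nu\geq 0$ uses only the standing assumption $\kappa\leq\delplus$.
\item The inclusion $\RetK(J)+\tau\subseteq J\setminus\nu$ is Theorem~\ref{theorem:structure_theorem_for_mconvex}(3).
\item For $\beta\in J$ with $\beta\leq\kappa+\tau$ you have $(\beta-\kproj(\beta))_i=\max(\beta_i-\kappa_i,0)\leq\tau_i$. Hence $|\kproj(\beta)|\geq r-|\tau|=r^*$, which puts $\beta$ in $\Jmax$ by Corollary~\ref{corollary:Jmax_size_description} and then forces $\beta-\kproj(\beta)=\tau$ by comparing sizes.
\end{itemize}
It reads better if you establish $\kappa$-maximality first, rather than treating it as a case and then as a ``remaining point''. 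That step is a one-line count, not the main obstacle.

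The same computation gives two side facts. First, $\alpha\in J\setminus\nu$, so $\nu$ is effectively coindependent. Second, every element of $J\setminus\nu$ dominates $\tau$, so translating by $-\tau$ is admissible.

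The paper's route buys an interpretation: $(\nu,\tau)$ arises from an explicit sequence of elementary embedded-minor operations, and these sequences are classified by sections. Your route buys brevity and independence from the elementary-retraction induction, since it needs only the Structure Theorem and the size characterization of $\Jmax$. It also gives a slightly sharper statement: the section $\RetK(J)+\tau$ is exactly the set of elements of $J$ bounded by $\kappa+\tau$. That is all the later identity $\RetK^\tau(f)=T_{-\tau}(f\setminus\nu)$ uses.
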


We close this section with two useful lemmas.

\begin{lemma}\label{lemma:projection_always_independent}
    For any $\alpha\in J$, there exists $\beta\in\Jmax$ such that
    \[
        \kproj(\alpha)\leq\kproj(\beta).
    \]
    Equivalently, $\kproj(\alpha)$ is contained in some basis of $\RetK(J)$.
\end{lemma}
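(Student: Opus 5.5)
Since $J$ is finite, the set $\kproj(J)$ is a finite poset under the coordinatewise order, so the element $\kproj(\alpha)$ lies below some maximal element of it. I would make this explicit by choosing $\beta \in J$ such that $\kproj(\alpha)\leq\kproj(\beta)$ and such that $|\kproj(\beta)|$ is as large as possible among all elements of $J$ whose projection dominates $\kproj(\alpha)$. Such a $\beta$ exists because $\alpha$ itself is a candidate and $J$ is finite.

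The next step is to check that $\kproj(\beta)$ is coordinatewise maximal in $\kproj(J)$. Suppose $\kproj(\gamma)\geq\kproj(\beta)$ for some $\gamma\in J$. Then $\kproj(\gamma)\geq\kproj(\alpha)$ by transitivity. The choice of $\beta$ then gives $|\kproj(\gamma)|\leq|\kproj(\beta)|$. Together with $\kproj(\gamma)\geq\kproj(\beta)$, this forces $\kproj(\gamma)=\kproj(\beta)$. Hence $\beta$ is $\kappa$-maximal in $J$, that is, $\beta\in\Jmax$. This gives the first formulation of the lemma.

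For the equivalent formulation, I would note that $\kproj(\beta)\in\kproj(\Jmax)=\RetK(J)$ by the definition of the retraction. So $\kproj(\alpha)$ lies below a basis of $\RetK(J)$, which is exactly the statement that it is contained in a basis. Conversely, if $\kproj(\alpha)\leq x$ for some $x\in\RetK(J)$, then any $\beta\in\Jmax$ with $\kproj(\beta)=x$ works, so the two formulations are equivalent.

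There is no real obstacle in this argument. The only care needed is that maximality is taken with respect to the coordinatewise order, so one must pick a maximal element above $\kproj(\alpha)$ rather than an arbitrary maximal element. Alternatively, one could take $\beta$ maximizing $|\kproj(\beta)|$ over all of $J$ and invoke Corollary~\ref{corollary:Jmax_size_description}, but that choice need not dominate $\kproj(\alpha)$. For this reason I restrict the maximization to the up-set of $\kproj(\alpha)$ as above.
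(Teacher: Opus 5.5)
Your argument is correct and is essentially the paper's proof. The paper picks a coordinatewise-maximal element $y$ of $\kproj(J)$ lying above $\kproj(\alpha)$ and lifts it to some $\beta\in\Jmax$. You do the same, but make the existence of that maximal element explicit by maximizing $|\kproj(\beta)|$ over the up-set of $\kproj(\alpha)$.
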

\begin{proof}
    Let $x = \kproj(\alpha)$ and choose $y \in \kproj(J)$ such that $x \leq y$ and $y$ is coordinatewise maximal in $\kproj(J)$. By definition of $\Jmax$ and $\kproj(J)$, there exists $\beta \in \Jmax$ with $y = \kproj(\beta)$. Thus $\kproj(\alpha) \leq \kproj(\beta)$.
\end{proof}

The next lemma is very useful. It shows that the collection of sections $\RetK(J)+\tau$ has the exchange structure of an M-convex set. Consequently, statements about sections can be proved using elementary base exchanges and the connectivity of the basis-exchange graph of a polymatroid.

\begin{lemma} \label{lemma:collection_of_sections_is_M-convex}
    Define the set
    \[
    S = \{\alpha - \kproj(\alpha) \mid \alpha \in \Jmax\}.
    \]
    Thus, $S$ is the set of translation vectors $\tau$ for which $\RetK(J) + \tau$ is a section of $\Jmax$. Then $S$ is an M-convex set.
\end{lemma}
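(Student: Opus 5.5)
Our plan is to verify the exchange axiom \eqref{def:weak_exchange_axiom} for $S$ directly, pulling an exchange of section vectors back to an exchange inside $\Jmax$ and pushing it forward again. First, $S$ lies in $\Delta_n^{r-r^*}$ and is nonempty, since $\Jmax$ is. The basic tool is the Structure Theorem~\ref{theorem:structure_theorem_for_mconvex}(3): for every $\tau\in S$ and every $x\in\RetK(J)$, we have $x+\tau\in\Jmax$ and $\kproj(x+\tau)=x$. Hence each $\tau\in S$ can be realized over any base point $x\in\RetK(J)$ we choose.

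Now let $\tau,\sigma\in S$ and let $i$ satisfy $\tau_i<\sigma_i$. Fix a single $x\in\RetK(J)$ and put $\alpha=x+\tau$ and $\beta=x+\sigma$. Both lie in $\Jmax$, both project to $x$, and $\alpha_i<\beta_i$. By Lemma~\ref{lemma:fibers_Mconvex} the fiber $\kproj^{-1}(x)$ is M-convex, so there is $j$ with $\alpha_j>\beta_j$ and $\alpha+e_i-e_j\in\kproj^{-1}(x)$. Since $\alpha$ and $\beta$ share the base point $x$, the condition $\alpha_j>\beta_j$ is exactly $\tau_j>\sigma_j$. Moreover, $\alpha+e_i-e_j\in\Jmax$ has projection $x$, so its section vector is
\[
(\alpha+e_i-e_j)-x=\tau+e_i-e_j,
\]
and therefore $\tau+e_i-e_j\in S$. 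This is precisely \eqref{def:weak_exchange_axiom} for $S$. Since \eqref{def:weak_exchange_axiom} is equivalent to \eqref{def:m_convex_set}, $S$ is M-convex. If we want \eqref{def:m_convex_set} directly, we can apply the symmetric exchange axiom inside the fiber in the same way, which also gives $\sigma-e_i+e_j\in S$.

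The only subtle point is the identification of $S$ with $\kproj^{-1}(x)-x$. The inclusion of $\kproj^{-1}(x)-x$ in $S$ is immediate from the definition of $S$. The reverse inclusion needs Theorem~\ref{theorem:structure_theorem_for_mconvex}(3), so that section vectors arising from elements over different base points can all be moved to the common base point $x$. With that identification, $S$ is a translate of the M-convex fiber, and translates of M-convex sets are M-convex. We expect this step to be the main point to get right. Once it is in place, the rest is bookkeeping.
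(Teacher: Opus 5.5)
Your proposal is correct and follows essentially the paper's route. The paper fixes $x\in\RetK(J)$ and uses Theorem~\ref{theorem:structure_theorem_for_mconvex}(3) to identify $S=\kproj^{-1}(x)-x$, then concludes from Lemma~\ref{lemma:fibers_Mconvex} that $S$ is a translate of an M-convex set; you make the same identification and simply write out the resulting exchange inside the fiber explicitly.
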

\begin{proof}
    Fix $x \in \RetK(J)$. By Theorem~\ref{theorem:structure_theorem_for_mconvex}, we have
    \[
    S = \{\alpha - x \mid \alpha \in J \text{ and } \kproj(\alpha) = x\}
    = \kproj^{-1}(x) - x.
    \]
    By Lemma~\ref{lemma:fibers_Mconvex}, the set $\kproj^{-1}(x)$ is M-convex. Hence, $S$ is a translation of an M-convex set and is therefore M-convex.
\end{proof}

\section{Galois Connections and Applications}
\label{section:galois_connections_and_applications}
In this section, we formulate caged retraction as a Galois connection and use the resulting rank formula to study caged polymatroid union, disjoint bases, and induction along a bipartite graph.
\subsection{Posets of Polymatroids}

In this subsection, we use weak maps to define a poset on polymatroids. The weak map relation can be expressed either in terms of M-convex sets
or in terms of rank functions. We first show that the cryptomorphism between
these descriptions is an isomorphism of posets. We then show that the
$\kappa$-retraction and the inclusion of $\kappa$-caged polymatroids into all
polymatroids form a Galois connection. The uniqueness of Galois connections
will allow us to identify the rank function of the $\kappa$-retraction.

\begin{definition}\label{def:weak_map_order_mconvex}
    Let $J_1$ and $J_2$ be M-convex sets on $[n]$. We say there is a
    \emph{weak map} from $J_1$ to $J_2$ if, for every $\alpha\in J_2$,
    there exists $\beta\in J_1$ such that $\alpha\leq\beta$. In this case,
    we write
    \[
        J_2\preceq J_1.
    \]
    We call $\preceq$ the \emph{weak-map order} and write $\MConv$ for the
    resulting poset of M-convex sets on $[n]$. For a fixed cage
    $\kappa\in\Zgeqn$, we write $\MConvK$ for the induced subposet of
    $\kappa$-caged M-convex sets.
\end{definition}

\begin{definition}\label{def:weak_map_order_rank}
    Let $f_1$ and $f_2$ be polymatroid rank functions on $[n]$. We say
    there is a \emph{weak map} from $f_1$ to $f_2$ if
    \[
        f_2(A)\leq f_1(A)
        \qquad\text{for every }A\subseteq[n].
    \]
    In this case, we write $f_2\preceq f_1$. We call $\preceq$ the
    \emph{weak-map order} and write $\PMat$ for the resulting poset of
    polymatroid rank functions on $[n]$. For a fixed cage $\kappa$, we write
    $\PMatK$ for the induced subposet of $\kappa$-caged polymatroid rank
    functions.
\end{definition}

The direction of the notation is worth emphasizing. A weak map is written
from the larger object to the smaller object, whereas the order relation is
written with the smaller object first. Thus a weak map $J_1\to J_2$
corresponds to the inequality $J_2\preceq J_1$, and similarly for rank
functions.

We now show that these two relations agree under the cryptomorphism between
M-convex sets and polymatroid rank functions.

\begin{proposition}\label{prop:equivalence_of_posets}
Let $f_1$ and $f_2$ be polymatroid rank functions on $[n]$, and let $J_1$
and $J_2$ be their respective associated M-convex sets via
Proposition~\ref{prop:M_convex_to_rank}. Then the following statements are
equivalent:
\begin{enumerate}
    \item[\textup{(1)}] $f_2\preceq f_1$, or equivalently, there is a weak
    map from $f_1$ to $f_2$.

    \item[\textup{(2)}] For all $\beta \in J_2$ and all
    $A \subseteq [n]$, there exists $\alpha \in J_1$ such that
    \[
        \sum_{i \in A} \beta_i \leq \sum_{i \in A} \alpha_i.
    \]

    \item[\textup{(3)}] For all $\beta \in J_2$ and all
    $A \subseteq [n]$, there exists $\alpha \in J_1$ such that
    \[
        \forall i \in A,\ \beta_i \leq \alpha_i.
    \]

    \item[\textup{(4)}] $J_2\preceq J_1$, or equivalently, there is a weak
    map from $J_1$ to $J_2$.
\end{enumerate}
\end{proposition}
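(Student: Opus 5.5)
The equivalences (1)$\Leftrightarrow$(2) and (4)$\Rightarrow$(3)$\Rightarrow$(2) are essentially formal. The substance is closing the cycle with (1)$\Rightarrow$(4), which I would prove through the description of $J_1$ in Proposition~\ref{prop:M_convex_to_rank}. Throughout I use that $f_k(A)=\max\{\beta(A)\mid \beta\in J_k\}$ for $k=1,2$.

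For (1)$\Leftrightarrow$(2), first assume (1). Given $\beta\in J_2$ and $A$, we have $\beta(A)\le f_2(A)\le f_1(A)$. Choose $\alpha\in J_1$ attaining the maximum, so that $\alpha(A)=f_1(A)$; then $\beta(A)\le\alpha(A)$. Conversely, assume (2) and fix $A$. Pick $\beta\in J_2$ with $\beta(A)=f_2(A)$. Condition (2) gives $f_2(A)\le\alpha(A)\le f_1(A)$ for some $\alpha\in J_1$.

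The implication (4)$\Rightarrow$(3) is immediate: a single $\alpha\ge\beta$ works for every $A$. For (3)$\Rightarrow$(2), coordinatewise domination on $A$ gives $\beta(A)\le\alpha(A)$ by summing over $i\in A$.

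The main step is (1)$\Rightarrow$(4). Fix $\beta\in J_2$. By (1), $\beta(S)\le f_2(S)\le f_1(S)$ for every $S\subseteq[n]$. So $\beta$ is an integral vector in the independence polytope of $f_1$, i.e.\ it satisfies every inequality defining $J_{f_1}$ except possibly the rank equation. I will show that such a vector can be increased, one unit at a time, until it reaches a basis of $J_1$.

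Call $S$ \emph{tight} for a vector $\gamma$ if $\gamma(S)=f_1(S)$. Tight sets are closed under union. Indeed, if $S,T$ are tight, then submodularity gives
\[
\gamma(S\cup T)+\gamma(S\cap T)=f_1(S)+f_1(T)\ge f_1(S\cup T)+f_1(S\cap T).
\]
Combined with $\gamma(S\cup T)\le f_1(S\cup T)$ and $\gamma(S\cap T)\le f_1(S\cap T)$, this forces both to be tight.

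Now suppose $\gamma\ge\beta$ satisfies all the constraints $\gamma(S)\le f_1(S)$ but $|\gamma|<f_1([n])$. If every $i\in[n]$ lay in some tight set, the union of those sets would be $[n]$, and $[n]$ would be tight, a contradiction. Hence some $i$ lies in no tight set. Then $\gamma+e_i$ still satisfies every constraint: for $S\ni i$ the constraint had slack at least $1$ by integrality, and for $S\not\ni i$ nothing changes.

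Iterating this step strictly increases $|\gamma|$, which is bounded by $f_1([n])$. So the process terminates at some $\alpha\ge\beta$ with $|\alpha|=f_1([n])$ and $\alpha(S)\le f_1(S)$ for all $S$. By Proposition~\ref{prop:M_convex_to_rank}, $\alpha\in J_{f_1}=J_1$, which proves $J_2\preceq J_1$.

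The only point needing care is this augmentation step: the union-closure of tight sets and the integrality that turns slack into a unit increment.
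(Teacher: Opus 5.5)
Your proof is correct, but it closes the cycle differently from the paper. The paper proves $(1)\Rightarrow(2)\Rightarrow(3)\Rightarrow(4)\Rightarrow(1)$, and its substantive step is $(2)\Rightarrow(3)$. That step is an induction on $|A|$: choose $\alpha\in J_1$ with $\beta(A)\le\alpha(A)$ that minimizes the deficit $\sum_{i\in S_<}(\beta_i-\alpha_i)$. If $S_<\neq\varnothing$, the inductive hypothesis on $A\setminus S_>$ together with the exchange axiom in $J_1$ produces an element with strictly smaller deficit. Then $(3)\Rightarrow(4)$ is just the case $A=[n]$.

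You instead prove $(1)\Rightarrow(4)$ directly, by the classical augmentation argument in the independence polytope of $f_1$. The key facts are that tight sets are closed under union and that integrality turns slack into room for a unit increment. You then obtain (3) as a formal consequence of (4). Your route is shorter, skips (3) as an intermediate, and proves the stronger standard fact that every integral $\gamma$ with $\gamma(S)\le f_1(S)$ for all $S$ lies below some basis of $J_1$.

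The cost is that your argument relies on the nontrivial half of Proposition~\ref{prop:M_convex_to_rank}: that the inequality-defined set $J_{f_1}$ equals $J_1$. The paper's argument uses only the max-formula $f_1(A)=\max_{\alpha\in J_1}\alpha(A)$ and the exchange axiom in $J_1$. So it stays entirely on the M-convex side and works from the basis description alone.
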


\begin{proof}
\medskip
\noindent{\textup{(1)} $\implies$ \textup{(2)}.}
By Proposition~\ref{prop:M_convex_to_rank}, choose $\alpha\in J_1$ such
that $f_1(A)=\alpha(A)$. Then
\[
    \beta(A)\leq f_2(A)\leq f_1(A)=\alpha(A),
\]
which proves (2).

\medskip
\noindent{\textup{(2)} $\implies$ \textup{(3)}.}
Fix $\beta \in J_2$ and $A \subseteq [n]$. We proceed by induction on
$|A|$. The case $|A| = 0$ is trivial, and the case $|A|=1$ follows immediately from (2).

For any $\alpha \in J_1$, we partition $A$ into three sets:
\[
    A = S_> \cup S_= \cup S_<,
\]
where
\begin{align*}
    S_> &= \{i \in A \mid \alpha_i > \beta_i\}, \\
    S_= &= \{i \in A \mid \alpha_i = \beta_i\}, \\
    S_< &= \{i \in A \mid \alpha_i < \beta_i\}.
\end{align*}
Choose $\alpha \in J_1$ that satisfies
\begin{equation*}\label{ineq:yA}
    \sum_{i \in A} \beta_i \leq \sum_{i \in A} \alpha_i \tag{$\star$}
\end{equation*}
and minimizes the sum
\begin{equation*}\label{ineq:Slower}
    \sum_{i \in S_<} \bigl(\beta_i-\alpha_i\bigr). \tag{$\star\star$}
\end{equation*}
The existence of such an element is guaranteed by (2).

We claim that $S_< = \varnothing$. Assume for contradiction that it is
nonempty. Because of \eqref{ineq:yA}, the set $S_>$ must also be nonempty.
By the induction hypothesis, there exists an element $\alpha' \in J_1$ such
that
\[
    \alpha'_j \geq \beta_j
    \qquad\text{for all }j\in A\setminus S_>.
\]

Take an index $i \in S_<$. Then
$\alpha_i<\beta_i\leq\alpha'_i$, so by the exchange axiom for M-convex
sets, there exists an index $j\in[n]$ with
$\alpha_j>\alpha'_j$ such that
$\alpha+e_i-e_j\in J_1$.

If $j\notin A$, then the vector $\alpha+e_i-e_j$ still satisfies
\eqref{ineq:yA} and strictly decreases \eqref{ineq:Slower}, which
contradicts the minimality of $\alpha$.

If $j\in A$, then since $\alpha'_j\geq\beta_j$ for all elements of
$A\setminus S_>$, we must have $j\in S_>$. Thus
$\alpha+e_i-e_j$ satisfies \eqref{ineq:yA} and again makes
\eqref{ineq:Slower} strictly smaller, a contradiction.

Hence $S_<=\varnothing$, which completes the inductive step and proves
(3).

\medskip
\noindent{\textup{(3)} $\implies$ \textup{(4)}.}
This follows by applying statement (3) to the full ground set $A=[n]$.

\medskip
\noindent{\textup{(4)} $\implies$ \textup{(1)}.}
This follows immediately from the definition of the rank function of a
polymatroid and Proposition~\ref{prop:M_convex_to_rank}.
\end{proof}

\begin{corollary}\label{corr:equivalence_of_posets}
    The cryptomorphism $J\mapsto f_J$ is an order isomorphism
    \[
        \MConv\cong\PMat.
    \]
    It restricts to an order isomorphism
    \[
        \MConvK\cong\PMatK.
    \]
\end{corollary}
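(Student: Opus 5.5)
The corollary follows almost directly from Proposition~\ref{prop:equivalence_of_posets} together with the bijection of Proposition~\ref{prop:M_convex_to_rank}. First, I will note that the map $J\mapsto f_J$ is a bijection between the underlying sets of $\MConv$ and $\PMat$, with inverse $f\mapsto J_f$, by Proposition~\ref{prop:M_convex_to_rank}. An order isomorphism is a bijection $\Phi$ with $J_2\preceq J_1$ if and only if $\Phi(J_2)\preceq\Phi(J_1)$. Given M-convex sets $J_1,J_2$, put $f_1=f_{J_1}$ and $f_2=f_{J_2}$. Then $J_1,J_2$ are exactly the M-convex sets associated to $f_1,f_2$ in the sense of Proposition~\ref{prop:equivalence_of_posets}, since the two maps are mutually inverse. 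The equivalence (1)$\iff$(4) of that proposition now says precisely that $f_{J_2}\preceq f_{J_1}$ if and only if $J_2\preceq J_1$. This proves the first statement.

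Along the way I will briefly confirm that $\preceq$ on $\MConv$ is genuinely a partial order. Reflexivity and transitivity are clear from the definition. Antisymmetry is inherited through the bijection from antisymmetry of pointwise $\leq$ on rank functions, so it needs no separate proof.

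For the restriction, I will invoke Corollary~\ref{corollary:caged_polymatroids_rank}. It says that $J$ is $\kappa$-caged if and only if $f_J(S)\le\kappa(S)$ for all $S$, which is exactly the defining condition for $f_J$ to lie in $\PMatK$. Hence the bijection $J\mapsto f_J$ carries the subset $\MConvK$ onto the subset $\PMatK$. Both posets carry the induced orders, so the restriction of an order isomorphism to corresponding subsets is again an order isomorphism.

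There is no real obstacle here. The only point needing care is the direction convention: a weak map $J_1\to J_2$ corresponds to $J_2\preceq J_1$. I must check that the proposition is applied with $f_1,f_2$ and $J_1,J_2$ matched in the same roles, so that the isomorphism is order-preserving rather than order-reversing.
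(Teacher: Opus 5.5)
Your proposal is correct and follows the paper's proof: you take the bijection from Proposition~\ref{prop:M_convex_to_rank}, use the equivalence (1)$\iff$(4) of Proposition~\ref{prop:equivalence_of_posets} to get that the order is both preserved and reflected, and restrict via Corollary~\ref{corollary:caged_polymatroids_rank}. Your extra check of antisymmetry, via injectivity of $J\mapsto f_J$ and antisymmetry of the pointwise order on rank functions, is also correct.
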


\begin{proof}
    By Proposition~\ref{prop:M_convex_to_rank}, the assignment
    $J\mapsto f_J$ is a bijection, and
    Proposition~\ref{prop:equivalence_of_posets} shows that it preserves
    and reflects the weak-map orders. Hence it is an order isomorphism.
    By Corollary~\ref{corollary:caged_polymatroids_rank}, it restricts to the
    stated order isomorphism between the $\kappa$-caged objects.
\end{proof}

\subsection{Caged retractions as a Galois connection}

We first recall the notion of a Galois connection. Let $P$ and $Q$ be
posets. Maps
\[
    L\colon P\longrightarrow Q
    \qquad\text{and}\qquad
    R\colon Q\longrightarrow P
\]
form a \emph{Galois connection}, written $L\dashv R$, if
\[
    L(p)\leq q
    \quad\Longleftrightarrow\quad
    p\leq R(q)
\]
for every $p\in P$ and $q\in Q$. This equivalence implies that both maps
are order-preserving. Importantly, either map in a Galois connection
uniquely determines the other: $R(q)$ is the greatest $p\in P$ satisfying
$L(p)\leq q$, while $L(p)$ is the least $q\in Q$ satisfying
$p\leq R(q)$. We will use this uniqueness to identify the two formulations
of the $\kappa$-retraction.

Throughout this subsection, fix $n\in\Zgeq$ and
$\kappa\in\Zgeqn$. Let
\[
    \iota_\kappa\colon\MConvK\hookrightarrow\MConv
\]
denote the inclusion of posets.

\begin{theorem}\label{thm:reflection_as_M_convex}
    The inclusion $\iota_\kappa$ and the $\kappa$-retraction form a Galois
    connection
    \[
        \iota_\kappa\dashv\RetK.
    \]
    Equivalently, for every M-convex set $J$ on $[n]$, the retraction
    $\RetK(J)$ is the greatest $\kappa$-caged M-convex set $K$ satisfying
    \[
        K\preceq J.
    \]
\end{theorem}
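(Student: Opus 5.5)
We must show: for every $\kappa$-caged M-convex set $K$ and every M-convex $J$, $K\preceq J$ if and only if $K\preceq \RetK(J)$. By Theorem~\ref{theorem:retraction_is_polymatroid}, $\RetK(J)$ lies in $\MConvK$, so the map $\RetK\colon\MConv\to\MConvK$ is well defined. The equivalent formulation (greatest caged $K$ below $J$) then follows formally from the Galois connection.

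For the direction $K\preceq\RetK(J)\Rightarrow K\preceq J$, it suffices by transitivity to show $\RetK(J)\preceq J$. This is immediate: each $x\in\RetK(J)$ has the form $x=\kproj(\alpha)=\alpha\wedge\kappa\leq\alpha$ for some $\alpha\in J$. Transitivity of $\preceq$ is clear from Definition~\ref{def:weak_map_order_mconvex}, since $\leq$ on $\Zgeqn$ is transitive.

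For the direction $K\preceq J\Rightarrow K\preceq\RetK(J)$, take $\gamma\in K$ and choose $\beta\in J$ with $\gamma\leq\beta$. Since $K$ is $\kappa$-caged, $\gamma\leq\kappa$, so $\gamma\leq\beta\wedge\kappa=\kproj(\beta)$. Lemma~\ref{lemma:projection_always_independent} then gives some $\beta'\in\Jmax$ with $\kproj(\beta)\leq\kproj(\beta')$, and $\kproj(\beta')\in\RetK(J)$. Hence $\gamma\leq\kproj(\beta')$ with $\kproj(\beta')\in\RetK(J)$, which is exactly the statement $K\preceq\RetK(J)$.

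The argument has no genuine obstacle. The only points to check are that $\preceq$ is a partial order on M-convex sets, so that the Galois-connection language applies, and that the standard uniqueness facts convert the adjunction into the "greatest caged $K$" formulation. Reflexivity and transitivity of $\preceq$ are immediate. Antisymmetry follows from Corollary~\ref{corr:equivalence_of_posets}: $J_1\preceq J_2\preceq J_1$ forces $f_{J_1}=f_{J_2}$, and Proposition~\ref{prop:M_convex_to_rank} then gives $J_1=J_2$. For the "greatest" formulation, taking $K=\RetK(J)$ shows $\RetK(J)\preceq J$, and any caged $K\preceq J$ satisfies $K\preceq\RetK(J)$, as shown above.
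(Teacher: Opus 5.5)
Your proof is correct and follows the paper's argument essentially step for step. In one direction, caging and the weak map give $\gamma\le\kproj(\beta)$, which Lemma~\ref{lemma:projection_always_independent} lifts into $\RetK(J)$; in the other, $\kproj(\alpha)\le\alpha$ and transitivity finish the argument. Your extra checks, namely well-definedness via Theorem~\ref{theorem:retraction_is_polymatroid} and antisymmetry of $\preceq$ via Corollary~\ref{corr:equivalence_of_posets}, are valid bookkeeping that the paper leaves implicit.
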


\begin{proof}
    First suppose that $\iota_\kappa(K)\preceq J$. Let $\beta\in K$.
    Because there is a weak map from $J$ to $K$, there exists an element
    $\alpha\in J$ such that $\beta_i\leq\alpha_i$ for every coordinate
    $i\in[n]$. Since $K$ is $\kappa$-caged, we have
    $\beta_i\leq\kappa_i$ for every $i\in[n]$. Combined with our first
    inequality, we have
    \[
        \beta_i
        \leq
        \min(\alpha_i,\kappa_i)
        =
        \kproj(\alpha)_i
        \qquad\text{for all }i\in[n].
    \]
    By Lemma~\ref{lemma:projection_always_independent}, there exists
    $x\in\RetK(J)$ such that $\kproj(\alpha)\leq x$. Therefore,
    $\beta_i\leq x_i$ for all $i\in[n]$, which proves
    $K\preceq\RetK(J)$.

    Conversely, $\kproj(\alpha)\leq\alpha$ for every $\alpha\in J$.
    Therefore, every element $x\in\RetK(J)$ is bounded above by an element
    of $J$, so $\RetK(J)\preceq J$. If
    $K\preceq\RetK(J)$, transitivity gives
    $\iota_\kappa(K)\preceq J$.
\end{proof}

\begin{remark}
    When $\kappa=\ones$, the poset $\MConvK$ consists of matroids on $[n]$
    ordered by weak maps. Thus Theorem~\ref{thm:reflection_as_M_convex}
    says that the matroid retraction is the greatest matroid below the
    original polymatroid in the weak-map order.
\end{remark}

We now define the corresponding $\kappa$-retraction of rank functions and
prove the analogous Galois connection.

\begin{definition}\label{def:rank_function_retraction}
    Let $f\colon 2^{[n]}\to\Zgeq$ be a polymatroid rank function. Define the
    \emph{$\kappa$-retraction} of $f$ to be the function
    $\RetK(f)\colon 2^{[n]}\to\Zgeq$ given by
    \[
        \RetK(f)(X)
        =
        \min_{Y\subseteq X}
        \left(f(Y)+\kappa(X\setminus Y)\right).
    \]
\end{definition}

This formula appears in Chapter 44 of
\cite{schrijver2003combinatorialB}. In Theorem~2.5 of \cite{Lovasz1983},
Lov\'asz shows that the \emph{convolution} of a submodular function and a
modular function is submodular, which includes the retraction as a special
case. For completeness, we include a proof that the $\kappa$-retraction is
a polymatroid rank function.

\begin{theorem}\label{thm:rank_retraction_is_polymatroid}
    Let $f\colon 2^{[n]}\rightarrow\Zgeq$ be a polymatroid rank
    function. Then the $\kappa$-retraction of $f$ is a polymatroid rank
    function and is caged by $\kappa$.
\end{theorem}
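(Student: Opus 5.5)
Write $g=\RetK(f)$, so $g(X)=\min_{Y\subseteq X}\bigl(f(Y)+\kappa(X\setminus Y)\bigr)$. The plan is to check the three polymatroid axioms and then the cage condition.

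\textbf{Normalization and cage.} Normalization is immediate: the only choice for $\varnothing$ is $Y=\varnothing$, which gives $g(\varnothing)=f(\varnothing)=0$. Nonnegativity and integrality are clear. For the cage, take $Y=\varnothing$ to get $g(X)\le \kappa(X)$ for every $X$. Corollary~\ref{corollary:caged_polymatroids_rank} then gives $\kappa$-cagedness once $g$ is known to be a polymatroid rank function.

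\textbf{Monotonicity.} Let $X\subseteq X'$ and let $Y'\subseteq X'$ attain $g(X')$. Put $Y=Y'\cap X$. Monotonicity of $f$ gives $f(Y)\le f(Y')$. Since $X\setminus Y\subseteq X'\setminus Y'$ and $\kappa\ge 0$, we also have $\kappa(X\setminus Y)\le \kappa(X'\setminus Y')$. Hence $g(X)\le g(X')$.

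\textbf{Submodularity.} This is the main step. Fix $S,T$ and let $A\subseteq S$ and $B\subseteq T$ attain $g(S)$ and $g(T)$. Use $A\cup B\subseteq S\cup T$ as the witness for $g(S\cup T)$ and $A\cap B\subseteq S\cap T$ as the witness for $g(S\cap T)$. This gives
\[
g(S\cup T)+g(S\cap T)\le f(A\cup B)+f(A\cap B)+\kappa\bigl((S\cup T)\setminus(A\cup B)\bigr)+\kappa\bigl((S\cap T)\setminus(A\cap B)\bigr).
\]
Submodularity of $f$ bounds the $f$-terms by $f(A)+f(B)$. It remains to compare the $\kappa$-terms, i.e.\ to show
\[
\ones_{(S\cup T)\setminus(A\cup B)}+\ones_{(S\cap T)\setminus(A\cap B)}\le \ones_{S\setminus A}+\ones_{T\setminus B}
\]
pointwise. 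Since $\kappa\ge 0$, this pointwise inequality implies the inequality of $\kappa$-sums.

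To check it, fix an element $e$.
\begin{itemize}
\item If $e\in S\cap T$, the left side is $[e\notin A\cup B]+[e\notin A\cap B]$ and the right side is $[e\notin A]+[e\notin B]$. These are equal.
\item If $e\in S\setminus T$, then $e\notin B$ because $B\subseteq T$. Both sides reduce to $[e\notin A]$.
\item The case $e\in T\setminus S$ is symmetric.
\item If $e\notin S\cup T$, both sides are $0$.
\end{itemize}
In other words, $X\mapsto \kappa(X\setminus Y)$ behaves modularly once the witnesses are combined by union and intersection. Doing this case check on $\kappa$-weights carefully is the only real obstacle.

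Adding the two bounds gives $g(S\cup T)+g(S\cap T)\le g(S)+g(T)$. This is the special case of Lov\'asz's convolution result.
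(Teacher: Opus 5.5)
Your proof is correct and follows the paper's argument essentially step for step. You use the same witnesses, $Y'\cap X$ for monotonicity and $A\cup B$, $A\cap B$ for submodularity; the only cosmetic differences are that you verify the $\kappa$-identity by a pointwise case check instead of the paper's algebraic use of the modularity of $\kappa$, and you get the cage bound from $Y=\varnothing$ for every $X$ rather than by evaluating at singletons.
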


\begin{proof}
    Evaluating the definition at a singleton $X=\{i\}$ gives
    $\RetK(f)(\{i\})=\min\{f(\{i\}),\kappa_i\}\leq\kappa_i$.
    Thus, if $\RetK(f)$ is a polymatroid rank function, it is
    $\kappa$-caged by Corollary~\ref{corollary:caged_polymatroids_rank}.

    It remains to verify that $\RetK(f)$ is a polymatroid rank function:

    \begin{itemize}
        \item (Normalized) Minimizing over subsets of $\varnothing$ forces
        the choice $Y=\varnothing$, so
        $\RetK(f)(\varnothing)=f(\varnothing)+\kappa(\varnothing)=0$.

        \item (Monotone) Let $X_1\subseteq X_2$. Let
        $Y_2\subseteq X_2$ be a subset of $X_2$ which minimizes
        $\RetK(f)(X_2)$, and let $Y_1=X_1\cap Y_2$. Notice that
        $Y_1\subseteq Y_2$ and
        \[
            X_1\setminus Y_1
            =
            X_1\setminus Y_2
            \subseteq
            X_2\setminus Y_2.
        \]
        Then by the definition of the minimum and the monotonicity of both
        $f$ and $\kappa$, we have
        \begin{align*}
            \RetK(f)(X_1)
            &\leq f(Y_1)+\kappa(X_1\setminus Y_1) \\
            &\leq f(Y_2)+\kappa(X_2\setminus Y_2) \\
            &=\RetK(f)(X_2).
        \end{align*}

        \item (Submodularity) Let $X_1,X_2\subseteq[n]$. Let
        $Y_1\subseteq X_1$ and $Y_2\subseteq X_2$ be the minimizers for
        $\RetK(f)(X_1)$ and $\RetK(f)(X_2)$, respectively. Choose the test
        sets
        \[
            Y_{\cup}=Y_1\cup Y_2\subseteq X_1\cup X_2
            \qquad\text{and}\qquad
            Y_{\cap}=Y_1\cap Y_2\subseteq X_1\cap X_2.
        \]

        First, by the submodularity of $f$,
        \[
            f(Y_1)+f(Y_2)
            \geq
            f(Y_{\cup})+f(Y_{\cap}).
        \]
        Since $\kappa$ is modular,
        \begin{align*}
            &\kappa(X_1\setminus Y_1)+\kappa(X_2\setminus Y_2)\\
            &\qquad=
            \kappa(X_1)+\kappa(X_2)-\kappa(Y_1)-\kappa(Y_2)\\
            &\qquad=
            \kappa(X_1\cup X_2)+\kappa(X_1\cap X_2)
            -\kappa(Y_\cup)-\kappa(Y_\cap)\\
            &\qquad=
            \kappa((X_1\cup X_2)\setminus Y_\cup)
            +
            \kappa((X_1\cap X_2)\setminus Y_\cap).
        \end{align*}
        Adding these inequalities gives
        \[
            \RetK(f)(X_1)+\RetK(f)(X_2)
            \geq
            \RetK(f)(X_1\cup X_2)+\RetK(f)(X_1\cap X_2),
        \]
        completing the proof.
    \end{itemize}
\end{proof}

Let
\[
    \jmath_\kappa\colon\PMatK\hookrightarrow\PMat
\]
denote the inclusion of posets.

\begin{theorem}\label{thm:rank_retraction_galois}
    The inclusion $\jmath_\kappa$ and the $\kappa$-retraction form a Galois
    connection
    \[
        \jmath_\kappa\dashv\RetK.
    \]
    Equivalently, for every polymatroid rank function $f$ on $[n]$, the
    retraction $\RetK(f)$ is the greatest $\kappa$-caged polymatroid rank
    function $g$ satisfying
    \[
        g\preceq f.
    \]
\end{theorem}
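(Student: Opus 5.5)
We must show, for $g\in\PMatK$ and $f\in\PMat$, that $g\preceq f$ holds if and only if $g\preceq\RetK(f)$. This is a direct rank-function argument that does not pass through M-convex sets. By Theorem~\ref{thm:rank_retraction_is_polymatroid}, $\RetK(f)$ is a $\kappa$-caged polymatroid rank function, so $\RetK$ is a well-defined map $\PMat\to\PMatK$. The equivalence then splits into two directions.

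\emph{Step 1: $\RetK(f)\preceq f$.} Take $Y=X$ in the minimum defining $\RetK(f)(X)$. This gives $\RetK(f)(X)\le f(X)+\kappa(\varnothing)=f(X)$ for every $X$. Hence if $g\preceq\RetK(f)$, transitivity yields $\jmath_\kappa(g)\preceq f$.

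\emph{Step 2: if $g$ is $\kappa$-caged and $g\preceq f$, then $g\preceq\RetK(f)$.} Fix $X$ and let $Y\subseteq X$ attain the minimum in $\RetK(f)(X)$. Submodularity of $g$ applied to $Y$ and $X\setminus Y$ (disjoint, with union $X$), together with $g(\varnothing)=0$, gives subadditivity:
\[
g(X)\le g(Y)+g(X\setminus Y).
\]
Now bound each term. We have $g(Y)\le f(Y)$ because $g\preceq f$. We also have $g(X\setminus Y)\le\kappa(X\setminus Y)$ by Corollary~\ref{corollary:caged_polymatroids_rank}. Combining these,
\[
g(X)\le f(Y)+\kappa(X\setminus Y)=\RetK(f)(X).
\]

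These two steps prove the Galois connection. The "greatest" reformulation follows from the standard fact that the right adjoint satisfies $R(q)=\max\{p : L(p)\le q\}$. Concretely, $\RetK(f)$ itself satisfies $\RetK(f)\preceq f$ by Step 1, and it dominates every other $\kappa$-caged $g\preceq f$ by Step 2. Both directions are routine, so there is no real obstacle.

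The only points needing care are the subadditivity step, which follows from submodularity plus normalization, and the use of the rank-function characterization of caging. For the latter, Corollary~\ref{corollary:caged_polymatroids_rank} gives $g(S)\le\kappa(S)$ for all $S$, not just for singletons.
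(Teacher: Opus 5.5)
Your proof is correct and follows essentially the same route as the paper. You get subadditivity of $g$ from submodularity plus normalization, then apply the cage bound and the weak-map bound to obtain $g(X)\le f(Y)+\kappa(X\setminus Y)$, and you take $Y=X$ to get $\RetK(f)\preceq f$ for the converse. The only cosmetic difference is that you fix a minimizing $Y$, whereas the paper proves the bound for every $Y\subseteq X$; the two are equivalent.
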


\begin{proof}
    First suppose that $\jmath_\kappa(g)\preceq f$. Fix
    $X\subseteq[n]$. To prove that $g(X)$ is bounded by the retraction, it
    suffices to show that, for every subset $Y\subseteq X$, we have
    \[
        g(X)\leq f(Y)+\kappa(X\setminus Y).
    \]
    For any such $Y$, we have
    \begin{align*}
        g(X)
        &=g(Y\cup(X\setminus Y))\\
        &\leq g(Y)+g(X\setminus Y)\\
        &\leq g(Y)+\kappa(X\setminus Y)\\
        &\leq f(Y)+\kappa(X\setminus Y).
    \end{align*}
    The first inequality follows from the submodularity and normalization
    of $g$. The second holds because $g$ is $\kappa$-caged, and the final
    inequality holds because $\jmath_\kappa(g)\preceq f$. It follows that
    $g\preceq\RetK(f)$.

    Conversely, choosing $Y=X$ in the definition of $\RetK(f)$ gives
    $\RetK(f)(X)\leq f(X)$ for every $X\subseteq[n]$. Thus
    $\RetK(f)\preceq f$. If $g\preceq\RetK(f)$, transitivity gives
    $\jmath_\kappa(g)\preceq f$.
\end{proof}

The order isomorphisms in
Corollary~\ref{corr:equivalence_of_posets} identify the two inclusion
maps. The uniqueness property of Galois connections therefore identifies
the corresponding retraction maps.

\begin{corollary}\label{cor:both_reflections_same}
    Let $f$ be a polymatroid rank function on $[n]$, and let $J$ be its
    associated M-convex set via
    Proposition~\ref{prop:M_convex_to_rank}. Then the rank function of
    $\RetK(J)$ is $\RetK(f)$.
\end{corollary}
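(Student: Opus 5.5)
We deduce the corollary formally from the two Galois connections, using the order isomorphism of Corollary~\ref{corr:equivalence_of_posets}. Let $\Phi\colon\MConv\to\PMat$ denote the cryptomorphism $J\mapsto f_J$. By Corollary~\ref{corr:equivalence_of_posets}, $\Phi$ is an order isomorphism. It restricts to an order isomorphism $\Phi_\kappa\colon\MConvK\to\PMatK$, and by construction $\Phi\circ\iota_\kappa=\jmath_\kappa\circ\Phi_\kappa$. The claim is that $\Phi_\kappa(\RetK(J))=\RetK(f_J)$.

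First we transport the Galois connection of Theorem~\ref{thm:reflection_as_M_convex} across $\Phi$. For $g\in\PMatK$ and $f\in\PMat$, write $g=\Phi_\kappa(K)$ and $f=\Phi(J)$. Since $\Phi$ and $\Phi_\kappa$ preserve and reflect order, the following equivalences hold:
\[
\jmath_\kappa(g)\preceq f
\iff \iota_\kappa(K)\preceq J
\iff K\preceq \RetK(J)
\iff g\preceq \Phi_\kappa(\RetK(J)).
\]
So the map $R'\coloneqq\Phi_\kappa\circ\RetK\circ\Phi^{-1}$ is a right adjoint to $\jmath_\kappa$. Here $\RetK$ is the M-convex retraction, which is well-defined into $\MConvK$ by Theorem~\ref{theorem:retraction_is_polymatroid}.

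By Theorem~\ref{thm:rank_retraction_galois}, the rank-function retraction $\RetK$ is also a right adjoint to $\jmath_\kappa$. Theorem~\ref{thm:rank_retraction_is_polymatroid} guarantees that it lands in $\PMatK$. Right adjoints are unique. Concretely, for each $f$, both $R'(f)$ and $\RetK(f)$ are the greatest element of $\{g\in\PMatK : g\preceq f\}$, and a poset has at most one greatest element. Hence $R'(f)=\RetK(f)$. Taking $f=f_J$ gives $f_{\RetK(J)}=\RetK(f_J)$.

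There is no genuine obstacle here. The only points needing care are bookkeeping. We must check that the restricted isomorphism $\Phi_\kappa$ really commutes with the two inclusions, which holds because it is literally a restriction of $\Phi$. We must also check that antisymmetry holds in $\PMat$, so that the greatest element is unique. This is immediate: $\preceq$ on rank functions is pointwise comparison, and equality of rank functions is pointwise equality.
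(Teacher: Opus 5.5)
Your proposal is correct and follows the same argument as the paper: transport the Galois connection $\iota_\kappa\dashv\RetK$ across the order isomorphism of Corollary~\ref{corr:equivalence_of_posets}, then use the uniqueness of right adjoints against Theorem~\ref{thm:rank_retraction_galois}. Your version spells out details the paper leaves implicit, namely that the restricted isomorphism commutes with the two inclusions and that a greatest element is unique.
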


\begin{proof}
    Under the order isomorphisms of
    Corollary~\ref{corr:equivalence_of_posets}, the Galois connection of
    Theorem~\ref{thm:reflection_as_M_convex} becomes a Galois connection whose
    left-hand map is
    \[
        \jmath_\kappa\colon\PMatK\hookrightarrow\PMat
    \]
    and whose right-hand map sends $f$ to the rank function of $\RetK(J)$.
    By Theorem~\ref{thm:rank_retraction_galois}, the map
    $f\mapsto\RetK(f)$ is also a right-hand map in a Galois connection with
    $\jmath_\kappa$. Since the right-hand map in a Galois connection is
    uniquely determined by the left-hand map, the two rank functions are
    equal.
\end{proof}

\begin{remark}\label{remark:categorical_retraction}
    When the cage is constant, say $\kappa=m\cdot \ones$, the preceding result admits
    a stronger categorical formulation. The $\kappa$-retraction extends to a
    functor on the category of polymatroids with weak maps, allowing the ground
    set to vary, and this functor is left adjoint to the inclusion of the full
    subcategory of $\kappa$-caged polymatroids.
\end{remark}

\subsection{A Polymatroid Union Theorem}

In this section, we use the $\kappa$-retraction to prove a polymatroid union
theorem, which contains the classical matroid union theorem as a special
case. By using the description of the $\kappa$-retraction as an M-convex set
and rank function, we quickly get the result. Throughout this section, we
fix $n\in\Zgeq$ and $\kappa\in\Zgeqn$.

The proof of the following proposition is adapted from the corresponding
theorem for jump systems presented by Bouchet and Cunningham
\cite{bouchetCunninghamDeltaMatroidsJumpSystems}, which itself credits
Andr\'as Seb\"o.

\begin{proposition}\label{prop:sum_polymatroid_is_polymatroid}
    Let $J_1$ and $J_2$ be M-convex sets on $[n]$. Their Minkowski sum
    \[
        J_1+J_2
        =
        \{\alpha_1+\alpha_2\mid\alpha_i\in J_i\}
    \]
    is M-convex. Furthermore, its corresponding rank function is the sum
    of the rank functions of $J_1$ and $J_2$.
\end{proposition}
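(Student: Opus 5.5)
We prove M-convexity by the Sebő-style argument of Bouchet and Cunningham, using the exchange axiom (EA) together with a minimal-counterexample choice of representation. Suppose $\gamma=\alpha_1+\alpha_2$ and $\gamma'=\beta_1+\beta_2$ lie in $J_1+J_2$, with $\alpha_k,\beta_k\in J_k$, and let $i\in[n]$ satisfy $\gamma_i<\gamma'_i$. Among all decompositions of $\gamma$ and $\gamma'$, we choose one minimizing
\[
    \|\alpha_1-\beta_1\|_1+\|\alpha_2-\beta_2\|_1.
\]
Since $\gamma_i<\gamma'_i$, some $k$ satisfies $(\alpha_k)_i<(\beta_k)_i$; without loss of generality $k=1$. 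By (EA), and in fact by the symmetric version (SA), there is $j$ with $(\alpha_1)_j>(\beta_1)_j$ such that $\alpha_1+e_i-e_j\in J_1$ and $\beta_1-e_i+e_j\in J_1$.

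\textbf{Case 1:} $\gamma_j>\gamma'_j$. Then $\gamma+e_i-e_j=(\alpha_1+e_i-e_j)+\alpha_2\in J_1+J_2$, and we are done.

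\textbf{Case 2:} $\gamma_j\le\gamma'_j$. Since $(\alpha_1)_j>(\beta_1)_j$, this forces $(\alpha_2)_j<(\beta_2)_j$. Applying (SA) to $\beta_2,\alpha_2$ at coordinate $j$ gives $l$ with $(\beta_2)_l>(\alpha_2)_l$ such that $\beta_2-e_j+e_l\in J_2$. The idea is to replace $\beta_1,\beta_2$ by $\beta_1-e_i+e_j$ and $\beta_2-e_j+e_l$. This is a new decomposition of $\gamma'-e_i+e_l$, not of $\gamma'$, unless $l=i$. This is the main obstacle.

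To handle it, we modify the minimization: fix $\gamma$ and $i$ and let $\gamma'$ range as well. Concretely, we argue by induction on $\|\gamma-\gamma'\|_1$. If $l=i$, we obtain a decomposition of $\gamma'$ closer to $(\alpha_1,\alpha_2)$, contradicting minimality. If $l\neq i$, set $\gamma''=\gamma'-e_i+e_l$. Then $\gamma''$ lies in the sum, and, after rechecking the cases $\gamma_l$ versus $\gamma'_l$, it is closer to $\gamma$ in the cases where we are not already finished. The induction then yields the exchange for $\gamma,\gamma''$, with the caveat that coordinate $i$ must still satisfy $\gamma_i<\gamma''_i$. Failing that, we fall back on the cleaner alternative: prove that $J_1+J_2=J_{f_1+f_2}$ directly.

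The cleaner alternative is the one we actually intend to push through. First, $f_1+f_2$ is a polymatroid rank function, since normalization, monotonicity and submodularity are all additive. By Proposition~\ref{prop:M_convex_to_rank}, $J_{f_1+f_2}$ is M-convex. Every element of $J_1+J_2$ lies in $J_{f_1+f_2}$, since $(\alpha_1+\alpha_2)(S)\le f_1(S)+f_2(S)$ for all $S$, with equality at $S=[n]$. For the reverse inclusion we need: every integer $\gamma$ with $\gamma(S)\le f_1(S)+f_2(S)$ for all $S$ and $|\gamma|=f_1([n])+f_2([n])$ splits as $\alpha_1+\alpha_2$. This is the discrete polymatroid sum/intersection theorem (Edmonds), which we would cite from \cite{herzog2003discretepolymatroids} or \cite{murotaDiscreteConvexAnalysis2003}. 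Alternatively, it is proved by a flow/augmenting-path argument: choose $\alpha_1\in J_1$ and $\alpha_2$ in the independent set of $J_2$ maximizing $|\alpha_2|$ subject to $\alpha_1+\alpha_2\le\gamma$, and extract a tight set violating the inequality if $|\alpha_2|$ is not full.

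The rank statement then follows immediately. Given either argument, $f_{J_1+J_2}(A)=\max_{\alpha_k\in J_k}(\alpha_1(A)+\alpha_2(A))=f_1(A)+f_2(A)$, because the maximization separates over the two summands.
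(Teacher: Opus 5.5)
Your first (Sebő-style) attempt stalls exactly where you say it does, and the fallback does not supply the missing argument. By Proposition~\ref{prop:M_convex_to_rank}, the equality $J_1+J_2=J_{f_1+f_2}$ is equivalent to the proposition itself. If $J_1+J_2$ is M-convex with rank function $f_1+f_2$, the bijection forces the equality, and the converse also holds. So invoking the integral polymatroid sum theorem is citing the statement you are asked to prove. That is legitimate if such a citation is allowed, but what you actually prove is only the easy part: additivity of the axioms for $f_1+f_2$, the inclusion $J_1+J_2\subseteq J_{f_1+f_2}$, and the separation of the maximum. The augmenting-path sketch does not fill this in. It presupposes some $\alpha_1\in J_1$ with $\alpha_1\le\gamma$, which already needs an argument. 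And ``extract a tight set violating the inequality'' is precisely the min--max content of Edmonds' polymatroid intersection theorem, which you do not derive.

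The idea your first approach is missing, and which the paper uses, is to keep $\beta_1,\beta_2$ fixed, move only on the $\gamma$ side, and let the deficient coordinate float. Note first that (SA) applied to $\alpha_2,\beta_2$ at $j$ gives $l$ with $(\alpha_2)_l>(\beta_2)_l$, not the reverse. Then $(\alpha_1+e_i-e_j)+(\alpha_2+e_j-e_l)=\gamma+e_i-e_l$, and both summands move closer to $\beta_1,\beta_2$.

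Precisely, among all triples $(\alpha_1',\alpha_2',j)$ with $\alpha_t'\in J_t$ and $\alpha_1'+\alpha_2'=\gamma+e_i-e_j$, choose one minimizing $D=\sum_k\bigl(|(\alpha_1')_k-(\beta_1)_k|+|(\alpha_2')_k-(\beta_2)_k|\bigr)$. This set is nonempty, since $(\alpha_1,\alpha_2,i)$ belongs to it. Suppose $\gamma_j\le\gamma'_j$. Then $(\alpha_1'+\alpha_2')_j<\gamma'_j$ whether or not $j=i$, so $(\alpha_t')_j<(\beta_t)_j$ for some $t$. Now (EA) in $J_t$ yields $k$ with $(\alpha_t')_k>(\beta_t)_k$ and $\alpha_t'+e_j-e_k\in J_t$. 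This gives a triple with $j$ replaced by $k$ and $D$ smaller by $2$, a contradiction. Hence the minimizer has $\gamma_j>\gamma'_j$, so $j\ne i$ and $\gamma+e_i-e_j\in J_1+J_2$.

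This argument uses only (EA), requires no induction on $\gamma'$, and makes no appeal to Edmonds. Your rank computation then finishes the proof exactly as in the paper.
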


\begin{proof}
    We first show that $J_1+J_2$ is M-convex. Let $x,y\in J_1+J_2$, and
    write $x=\alpha_1+\alpha_2$ and $y=\beta_1+\beta_2$, where
    $\alpha_i,\beta_i\in J_i$. Suppose $x_i<y_i$.

    Consider all triples $(\alpha_1',\alpha_2',j)$ such that
    $\alpha_1'\in J_1$, $\alpha_2'\in J_2$, $j\in[n]$, and
    \[
        \alpha_1'+\alpha_2'=x+e_i-e_j.
    \]
    This set is nonempty, since $(\alpha_1,\alpha_2,i)$ is such a triple.
    Choose a triple minimizing
    \[
        D
        =
        \sum_{k\in[n]}
        \left(
            |(\alpha_1')_k-(\beta_1)_k|
            +
            |(\alpha_2')_k-(\beta_2)_k|
        \right).
    \]

    We claim that $x_j>y_j$. Suppose not, so $x_j\leq y_j$. Then
    $(\alpha_1'+\alpha_2')_j<(\beta_1+\beta_2)_j$. Indeed, if $j=i$,
    this follows from $x_i<y_i$, and if $j\neq i$, then
    $(\alpha_1'+\alpha_2')_j=x_j-1<y_j$. Thus either
    $(\alpha_1')_j<(\beta_1)_j$ or
    $(\alpha_2')_j<(\beta_2)_j$. Without loss of generality, assume
    $(\alpha_1')_j<(\beta_1)_j$.

    By the exchange axiom for $J_1$, there exists $k\in[n]$ such that
    $(\alpha_1')_k>(\beta_1)_k$ and
    $\alpha_1'+e_j-e_k\in J_1$. But then
    $(\alpha_1'+e_j-e_k,\alpha_2',k)$ is another valid triple, since
    \[
        (\alpha_1'+e_j-e_k)+\alpha_2'=x+e_i-e_k.
    \]
    Moreover, this replacement moves $\alpha_1'$ closer to $\beta_1$ in
    both the $j$ and $k$ coordinates and does not change any other
    coordinates. Thus the value of $D$ decreases, contradicting the choice
    of our triple.

    Hence $x_j>y_j$. In particular, $j\neq i$, and since
    $\alpha_1'+\alpha_2'=x+e_i-e_j$, this shows that
    $x+e_i-e_j\in J_1+J_2$. Therefore $J_1+J_2$ is M-convex.

    Now let $f_1$, $f_2$, and $f$ be the rank functions of $J_1$, $J_2$,
    and $J_1+J_2$, respectively. For $A\subseteq[n]$, the choices of
    $\alpha_1$ and $\alpha_2$ are independent, so
    \begin{align*}
        f(A)
        &=
        \max\left\{
            \sum_{i\in A}(\alpha_1+\alpha_2)_i
            \mathrel{\Big|}
            \alpha_1\in J_1,\ \alpha_2\in J_2
        \right\}\\
        &=
        \max\left\{
            \sum_{i\in A}(\alpha_1)_i
            \mathrel{\Big|}
            \alpha_1\in J_1
        \right\}
        +
        \max\left\{
            \sum_{i\in A}(\alpha_2)_i
            \mathrel{\Big|}
            \alpha_2\in J_2
        \right\}\\
        &=f_1(A)+f_2(A).
    \end{align*}
    Thus the corresponding rank function is $f_1+f_2$.
\end{proof}

\begin{definition}
    Let $J_1$ and $J_2$ be $\kappa$-caged M-convex sets on $[n]$. We
    define their \emph{$\kappa$-polymatroid union} to be
    \[
        J_1\vee J_2\coloneqq\RetK(J_1+J_2).
    \]
\end{definition}

\begin{theorem}[Polymatroid Union Rank Function]
\label{thm:polymatroid_union_rank}
    Let $\mathcal{P}_1=([n],f_1)$ and
    $\mathcal{P}_2=([n],f_2)$ be polymatroids with associated
    $\kappa$-caged M-convex sets $J_1$ and $J_2$. Then the rank function
    of $J_1\vee J_2$ is
    \[
        f_\vee(X)
        =
        \min_{Y\subseteq X}
        \left(
            f_1(Y)+f_2(Y)+\kappa(X\setminus Y)
        \right).
    \]
\end{theorem}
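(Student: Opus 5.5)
The rank function of $J_1\vee J_2=\RetK(J_1+J_2)$ follows by composing two earlier results. By Proposition~\ref{prop:sum_polymatroid_is_polymatroid}, the Minkowski sum $J_1+J_2$ is an M-convex set on $[n]$, and its rank function is $f_1+f_2$. In particular, $f_1+f_2$ is a polymatroid rank function, so the rank-function retraction of Definition~\ref{def:rank_function_retraction} can be applied to it.

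Next, apply Corollary~\ref{cor:both_reflections_same} to the polymatroid whose M-convex set is $J_1+J_2$ and whose rank function is $f=f_1+f_2$. This identifies the rank function of $\RetK(J_1+J_2)$ with $\RetK(f_1+f_2)$. Unwinding Definition~\ref{def:rank_function_retraction} gives, for every $X\subseteq[n]$,
\[
    \RetK(f_1+f_2)(X)=\min_{Y\subseteq X}\bigl(f_1(Y)+f_2(Y)+\kappa(X\setminus Y)\bigr),
\]
which is exactly $f_\vee(X)$.

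The only point needing care is bookkeeping. The sum $J_1+J_2$ is generally not $\kappa$-caged; it is caged only by $2\kappa$. Corollary~\ref{cor:both_reflections_same}, however, holds for an arbitrary polymatroid, so this causes no problem. The cagedness hypotheses on $J_1$ and $J_2$ are needed only so that $J_1\vee J_2$ is defined as stated. I do not expect a genuine obstacle: all the substance sits in the Galois-connection identification of the two retractions, together with the additivity of rank functions under Minkowski sum.

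In the matroid case $\kappa=\ones$, the formula reduces to $\min_{Y\subseteq X}\bigl(r_1(Y)+r_2(Y)+|X\setminus Y|\bigr)$. This is the classical matroid union rank formula, and it serves as a consistency check.
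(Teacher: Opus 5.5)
Your proposal is correct and follows the paper's proof: both combine Proposition~\ref{prop:sum_polymatroid_is_polymatroid}, which gives $f_1+f_2$ as the rank function of $J_1+J_2$, with Corollary~\ref{cor:both_reflections_same}, and then unwind Definition~\ref{def:rank_function_retraction}. Your remark that $J_1+J_2$ is in general only $2\kappa$-caged is accurate, and it causes no problem because the corollary holds for arbitrary polymatroids.
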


\begin{proof}
    By Proposition~\ref{prop:sum_polymatroid_is_polymatroid}, the rank
    function of $J_1+J_2$ is $f_1+f_2$. By
    Corollary~\ref{cor:both_reflections_same}, retracting an M-convex set
    and retracting its corresponding rank function give the same
    polymatroid. Applying the rank-function retraction to $f_1+f_2$ gives
    \[
        f_\vee(X)
        =
        \min_{Y\subseteq X}
        \left(
            (f_1+f_2)(Y)+\kappa(X\setminus Y)
        \right),
    \]
    which is the desired formula.
\end{proof}

We now recover the classical matroid union theorem by applying this
construction to the matroid retraction.

\begin{corollary}[Matroid Union Theorem]\label{corr:matroid_union}
    Let $M_1$ and $M_2$ be matroids on a common finite ground set $[n]$,
    with rank functions $r_1$ and $r_2$. The matroid union $M_1\vee M_2$
    has bases
    \[
        \mathcal{B}(M_1\vee M_2)
        =
        \max_{\subseteq}
        \left\{
            B_1\cup B_2
            \mid
            B_1\in\mathcal{B}(M_1),\,
            B_2\in\mathcal{B}(M_2)
        \right\},
    \]
    where $\max_{\subseteq}$ denotes the collection of inclusion-maximal
    members. Its rank function is given by
    \[
        r_{M_1\vee M_2}(X)
        =
        \min_{Y\subseteq X}
        \left(
            r_1(Y)+r_2(Y)+|X\setminus Y|
        \right).
    \]
\end{corollary}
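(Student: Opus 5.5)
The plan is to specialize everything to the unit cage $\kappa=\ones$. By Example~\ref{example:matroids_are_polymatroids}, $M_1$ and $M_2$ correspond to the $\ones$-caged M-convex sets $J_{M_1}$ and $J_{M_2}$, whose rank functions are $r_1$ and $r_2$. I take the matroid union $M_1\vee M_2$ to be, by definition, the $\ones$-polymatroid union $J_{M_1}\vee J_{M_2}=\Ret_{\ones}(J_{M_1}+J_{M_2})$. By Theorem~\ref{theorem:retraction_is_polymatroid} this is a $\ones$-caged M-convex set, hence a matroid. So it suffices to identify its bases and its rank function.

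For the rank function, I would apply Theorem~\ref{thm:polymatroid_union_rank} directly with $\kappa=\ones$. Since $\ones(X\setminus Y)=|X\setminus Y|$, the formula reads $\min_{Y\subseteq X}\bigl(r_1(Y)+r_2(Y)+|X\setminus Y|\bigr)$, which is exactly the stated expression. This step is immediate once Corollary~\ref{cor:both_reflections_same} and Proposition~\ref{prop:sum_polymatroid_is_polymatroid} are in hand, and both are already used in the proof of that theorem.

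For the bases, I would unwind the definition $\Ret_{\ones}(J)=\proj_{\ones}(J)^{\max_{\ones}}$ with $J=J_{M_1}+J_{M_2}$. An element of $J$ has the form $\ones_{B_1}+\ones_{B_2}$, and its $\ones$-projection is $(\ones_{B_1}+\ones_{B_2})\wedge\ones=\ones_{B_1\cup B_2}$. Thus $\proj_{\ones}(J)=\{\ones_{B_1\cup B_2}\}$. On $\{0,1\}$-vectors the coordinatewise order is inclusion of supports, so the coordinatewise-maximal elements of $\proj_{\ones}(J)$ are precisely the indicator vectors of the inclusion-maximal sets $B_1\cup B_2$. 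This gives the stated description of $\mathcal{B}(M_1\vee M_2)$.

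The main obstacle is essentially bookkeeping: the classical matroid union is usually defined via independent sets $I_1\cup I_2$, not via this retraction. If one insists on that definition, an extra step is needed to show the two agree. The bases of the classical union are the maximal sets $I_1\cup I_2$. Any such union extends to some $B_1\cup B_2$ by extending each $I_k$ to a basis, and every $B_1\cup B_2$ is itself of the form $I_1\cup I_2$. Hence the maximal members of the two families coincide, and so the classical union and the $\ones$-retraction have the same bases.
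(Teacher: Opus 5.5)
Your proposal is correct and follows the paper's proof essentially step for step. You specialize to $\kappa=\ones$, note that $\proj_{\ones}(\ones_{B_1}+\ones_{B_2})=\ones_{B_1\cup B_2}$ so that the maximal projections are exactly the inclusion-maximal unions, and read off the rank formula from Theorem~\ref{thm:polymatroid_union_rank}. Your closing paragraph reconciling the retraction with the classical independent-set definition also fills in a step the paper only asserts (``These are the bases of the usual matroid union'').
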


\begin{proof}
    Let $J_1$ and $J_2$ be the M-convex sets of bases of $M_1$ and $M_2$,
    viewed as subsets of $\mathbb{Z}_{\geq0}^n$ through their indicator
    vectors. The matroid retraction is the case $\kappa=\ones$. Thus
    \[
        J_1\vee J_2
        =
        \Ret_{\ones}(J_1+J_2).
    \]
    An element of $J_1+J_2$ has the form
    $\ones_{B_1}+\ones_{B_2}$, with
    $B_i\in\mathcal{B}(M_i)$. Projecting to the $\ones$-cage replaces
    this vector by the indicator vector of $B_1\cup B_2$. Taking maximal
    elements therefore gives precisely the inclusion-maximal sets among
    the unions $B_1\cup B_2$. These are the bases of the usual matroid
    union.

    Finally, applying Theorem~\ref{thm:polymatroid_union_rank} with
    $\kappa=\ones$ gives
    \[
        r_{M_1\vee M_2}(X)
        =
        \min_{Y\subseteq X}
        \left(
            r_1(Y)+r_2(Y)+\ones(X\setminus Y)
        \right).
    \]
    Since $\ones(X\setminus Y)=|X\setminus Y|$, this is exactly the
    classical matroid union rank formula.
\end{proof}
Using Theorem~\ref{thm:polymatroid_union_rank}, we can prove a generalization of the disjoint basis theorem for polymatroids.
\begin{theorem}[Caged Disjoint Basis Theorem]
\label{thm:caged_disjoint_basis}
    Let $J_1,\dots,J_m$ be M-convex sets on $[n]$, with corresponding rank
    functions $r_1,\dots,r_m$. Then there exist bases
    $\alpha_t\in J_t$ such that
    \[
        \alpha_1+\cdots+\alpha_m\leq\kappa
    \]
    if and only if, for every $A\subseteq[n]$,
    \[
        \sum_{t=1}^m
        \left(
            r_t([n])-r_t(A)
        \right)
        \leq
        \kappa([n]\setminus A).
    \]
\end{theorem}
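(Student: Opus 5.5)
We reduce to a rank computation for the $\kappa$-retraction of the Minkowski sum. Set $J = J_1+\cdots+J_m$. By Proposition~\ref{prop:sum_polymatroid_is_polymatroid}, applied inductively, $J$ is M-convex with rank function $f = r_1+\cdots+r_m$ and rank $R=\sum_t r_t([n])$. Bases $\alpha_t\in J_t$ with $\sum_t\alpha_t\leq\kappa$ exist exactly when some $\alpha\in J$ satisfies $\alpha\leq\kappa$. This in turn holds exactly when $\kproj(\alpha)=\alpha$ for some $\alpha\in J$, that is, when some element of $\kproj(J)$ has size $R$.

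The first step is to show this is equivalent to $\RetK(J)$ having rank $R$. If $\alpha\leq\kappa$, then $|\kproj(\alpha)|=R$. Since every element of $\kproj(J)$ has size at most $R$, the element $\kproj(\alpha)$ is maximal, so it lies in $\RetK(J)$ and the rank of $\RetK(J)$ is $R$. Conversely, suppose the rank is $R$. Pick $x\in\RetK(J)$ and $\alpha\in\Jmax$ with $\kproj(\alpha)=x$. Then $R=|x|\leq|\alpha|=R$ forces $\alpha=x\leq\kappa$. Corollary~\ref{corollary:Jmax_size_description} packages the same fact.

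The second step uses Corollary~\ref{cor:both_reflections_same}, in the form of Theorem~\ref{thm:polymatroid_union_rank} extended to $m$ summands. It gives that the rank of $\RetK(J)$ is
\[
\RetK(f)([n])=\min_{Y\subseteq[n]}\Bigl(\sum_{t=1}^m r_t(Y)+\kappa([n]\setminus Y)\Bigr).
\]
This minimum is always at most $R$, by taking $Y=[n]$. So the rank equals $R$ if and only if, for every $A\subseteq[n]$,
\[
\sum_t r_t(A)+\kappa([n]\setminus A)\geq \sum_t r_t([n]).
\]
Rearranged, this is exactly the stated inequality $\sum_t (r_t([n])-r_t(A))\leq\kappa([n]\setminus A)$.

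Most of the work has already been done by the Galois-connection identification of the two retractions. The one point needing care is the first step, that the rank of $\RetK(J)$ detects existence of an element of $J$ below $\kappa$. It rests on the observation that a projected vector of full size $R$ must equal its preimage. A minor bookkeeping point is that the definition of $\kappa$-polymatroid union assumed caged summands, whereas here the $J_t$ are arbitrary. This is harmless, because Proposition~\ref{prop:sum_polymatroid_is_polymatroid} and Corollary~\ref{cor:both_reflections_same} require no caging, so the rank formula is applied directly to $\RetK(J_1+\cdots+J_m)$.
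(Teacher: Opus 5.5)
Your proof is correct and matches the paper's argument for sufficiency essentially step for step: the Minkowski sum $J=J_1+\cdots+J_m$, the rank of $\RetK(J)$ computed via the rank-function retraction, and the observation that a projected basis of full size must equal its preimage. The only difference is that you get necessity by running the same chain of equivalences in reverse, whereas the paper proves necessity directly by summing $\alpha_t([n]\setminus A)\geq r_t([n])-r_t(A)$ over $t$, so that direction needs no retraction machinery.
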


\begin{proof}
    First suppose there exist bases $\alpha_t\in J_t$ with
    $\sum_t\alpha_t\leq\kappa$. For any $A\subseteq[n]$, we have
    $\alpha_t(A)\leq r_t(A)$, and hence
    \[
        \alpha_t([n]\setminus A)
        =
        r_t([n])-\alpha_t(A)
        \geq
        r_t([n])-r_t(A).
    \]
    Summing over $t$ gives
    \[
        \sum_{t=1}^m
        \left(
            r_t([n])-r_t(A)
        \right)
        \leq
        \sum_{t=1}^m\alpha_t([n]\setminus A)
        \leq
        \kappa([n]\setminus A).
    \]

    Conversely, suppose that the inequality holds for every
    $A\subseteq[n]$. Let
    \[
        J=J_1+\cdots+J_m.
    \]
    By iterating
    Proposition~\ref{prop:sum_polymatroid_is_polymatroid}, the rank
    function of $J$ is $r_1+\cdots+r_m$. Therefore the rank of the
    retraction $\RetK(J)$ is
    \[
        \min_{A\subseteq[n]}
        \left(
            \sum_{t=1}^m r_t(A)
            +
            \kappa([n]\setminus A)
        \right).
    \]
    By assumption, each term in this minimum is at least
    $\sum_t r_t([n])$, while equality occurs when $A=[n]$. Thus
    $\RetK(J)$ has the same total rank as $J$.

    Choose a basis $x$ of $\RetK(J)$. By construction,
    $x=\kproj(\alpha)$ for some $\alpha\in J$. Since $x$ and $\alpha$ have
    the same size, the projection did not decrease any coordinate of
    $\alpha$, and hence $x=\alpha$. Thus $x$ is a basis of $J$ which is
    bounded by $\kappa$. Writing
    \[
        x=\alpha_1+\cdots+\alpha_m
    \]
    with $\alpha_t\in J_t$, we obtain bases satisfying
    $\alpha_1+\cdots+\alpha_m\leq\kappa$.
\end{proof}

When the cage is $\ones$, the theorem specializes to the usual matroid
setting. Taking $J_1=\cdots=J_m$ to be the base set of a single matroid
recovers the classical disjoint basis theorem.

\begin{corollary}[Disjoint Basis Theorem]
\label{corr:disjoint_basis_theorem}
    Let $M$ be a matroid on $[n]$ with rank function $r$. Then $M$ has $m$
    pairwise disjoint bases if and only if, for every $A\subseteq[n]$,
    \[
        m\left(r([n])-r(A)\right)
        \leq
        |[n]\setminus A|.
    \]
\end{corollary}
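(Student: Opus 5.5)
The plan is to specialize Theorem~\ref{thm:caged_disjoint_basis} with $\kappa=\ones$ and $J_1=\cdots=J_m=J_M$, where $J_M=\{\ones_B\mid B\in\mathcal{B}(M)\}$ is the M-convex set of Example~\ref{example:matroids_are_polymatroids}. By that example, the rank function of $J_M$ is the matroid rank function $r$ of $M$. Hence each $r_t$ in Theorem~\ref{thm:caged_disjoint_basis} equals $r$. The right-hand side becomes $\ones([n]\setminus A)=|[n]\setminus A|$, and the left-hand side becomes $\sum_{t=1}^m(r([n])-r(A))=m(r([n])-r(A))$. So the inequality in the theorem is literally the stated inequality for every $A\subseteq[n]$.

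It remains to identify the condition on bases. The condition is that there exist $\alpha_t\in J_M$ with $\alpha_1+\cdots+\alpha_m\leq\ones$. Write $\alpha_t=\ones_{B_t}$ with $B_t\in\mathcal{B}(M)$. The coordinatewise bound $\sum_t\ones_{B_t}\leq\ones$ says each element of $[n]$ lies in at most one $B_t$, which is exactly pairwise disjointness. Conversely, pairwise disjoint bases $B_1,\dots,B_m$ give indicator vectors whose sum is $\ones_{B_1\cup\cdots\cup B_m}\leq\ones$.

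Chaining these two equivalences with Theorem~\ref{thm:caged_disjoint_basis} proves the corollary. There is essentially no obstacle. The only point deserving care is that the $B_t$ are allowed to be chosen independently from the same base family; the theorem permits this since the $J_t$ need not be distinct. The degenerate case $m=0$ is trivial on both sides.
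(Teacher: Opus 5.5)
Your proposal is correct and matches the paper's proof essentially verbatim. Both specialize the Caged Disjoint Basis Theorem to $\kappa=\ones$ and $J_1=\cdots=J_m=J_M$, use $\ones([n]\setminus A)=|[n]\setminus A|$, and read $\sum_t \ones_{B_t}\leq\ones$ as pairwise disjointness of the bases.
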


\begin{proof}
    Let
    \[
        J
        =
        \{\ones_B\mid B\text{ is a basis of }M\}
    \]
    be the M-convex set associated to the matroid $M$. Apply
    Theorem~\ref{thm:caged_disjoint_basis} with
    $J_1=\cdots=J_m=J$ and $\kappa=\ones$. A vector inequality
    \[
        \ones_{B_1}+\cdots+\ones_{B_m}\leq\ones
    \]
    is exactly the condition that the bases $B_1,\dots,B_m$ are pairwise
    disjoint. Since
    $\ones([n]\setminus A)=|[n]\setminus A|$, the theorem gives precisely
    the stated condition.
\end{proof}

\subsection{Induction Along a Bipartite Graph}
We now apply caged retraction to induction along a bipartite graph. Polymatroid induction along a bipartite graph goes back to McDiarmid \cite{mcdiarmidRadosTheoremPolymatroids1975} and is also a special case of Schrijver's induction through a poly-linking system \cite[Chapter~6]{schrijverMatroidsLinkingSystems1978}.

Let $G$ be a bipartite graph with left vertex set $[n]$ and right vertex set $[m]$, and let $J\subseteq\mathbb{Z}_{\geq 0}^n$ be an M-convex set with rank function $f$.
We regard $E(G)$ as a subset of $[n]\times[m]$ and write $ij$ for the edge $(i,j)$. For $A\subseteq[m]$, define its \emph{neighborhood} by
\[
    N_G(A)
    \coloneqq
    \{i\in[n]\mid ij\in E(G)\text{ for some }j\in A\}.
\]
We assume that $N_G([m])=[n]$. Otherwise, we first restrict $f$ and $G$ to the left vertex set $N_G([m])$.

For $w\in\mathbb{Z}_{\geq 0}^{E(G)}$, extend $w$ to $[n]\times[m]$ by setting $w_{ij}=0$ whenever $ij\notin E(G)$, and define
\[
    \begin{aligned}
        d_L(w)_i&=\sum_{j\in[m]}w_{ij},
        &&i\in[n],\\
        d_R(w)_j&=\sum_{i\in[n]}w_{ij},
        &&j\in[m].
    \end{aligned}
\]

\begin{definition}
    The \emph{induction of $J$ along $G$} is
    \[
        \operatorname{Ind}_G(J)
        =
        \left\{
            d_R(w):
            w\in\mathbb{Z}_{\geq 0}^{E(G)}
            \text{ and }
            d_L(w)\in J
        \right\}.
    \]
\end{definition}

\begin{proposition}\label{prop:bipartite_induction_rank}
    The set $\operatorname{Ind}_G(J)$ is M-convex, with rank function
    \[
        f_G(A)=f\bigl(N_G(A)\bigr)
        \qquad
        \text{for every }A\subseteq[m].
    \]
\end{proposition}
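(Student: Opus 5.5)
Rather than checking the exchange axiom directly on $\operatorname{Ind}_G(J)$, I will build it from operations already known to preserve M-convexity. Write $E=E(G)$ and regard $\mathbb{Z}_{\geq0}^E$ as the ground set of a lifted polymatroid. For each $i\in[n]$, let $E_i=\{ij\in E\}$ be the star of $i$; it is nonempty because $N_G([m])=[n]$. I define
\[
    \widetilde J=\{w\in\mathbb{Z}_{\geq0}^E : d_L(w)\in J\}.
\]
Then $\operatorname{Ind}_G(J)=d_R(\widetilde J)$. The plan is to show that $\widetilde J$ is M-convex and that the aggregation map $d_R$ preserves M-convexity.

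\emph{Step 1: $\widetilde J$ is M-convex.} Since the stars $E_i$ partition $E$, $\widetilde J$ is the preimage of $J$ under a partition-summing map. I will verify the exchange axiom \eqref{def:weak_exchange_axiom} directly. Take $w,w'\in\widetilde J$ and $e=ij$ with $w_e<w'_e$.
\begin{itemize}
    \item If $d_L(w)_i<d_L(w')_i$, apply \eqref{def:weak_exchange_axiom} in $J$ to get $k$ with $d_L(w)_k>d_L(w')_k$ and $d_L(w)+e_i-e_k\in J$. Some edge $f\in E_k$ then has $w_f>w'_f$, and $w+e_e-e_f\in\widetilde J$.
    \item Otherwise $d_L(w)_i\ge d_L(w')_i$. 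Since $w_e<w'_e$, some other $f\in E_i$ has $w_f>w'_f$, and $w+e_e-e_f$ keeps $d_L$ unchanged, so it lies in $\widetilde J$.
\end{itemize}
The rank function of $\widetilde J$ is $\tilde f(F)=f(\{i : E_i\cap F\neq\varnothing\})$. To see this, concentrate each $\alpha_i$ with $i$ in that set onto a single edge of $E_i\cap F$.

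\emph{Step 2: aggregation.} The map $d_R$ sums coordinates over the blocks $E^j=\{ij\in E\}$, which form another partition of $E$. I claim that the image of an M-convex set under such a block-sum is M-convex. I expect this to be the main obstacle, because the exchange must be lifted: given $x=d_R(w)$, $y=d_R(w')$ with $x_j<y_j$, I have to find $k$ with $x_k>y_k$ and $x+e_j-e_k$ in the image. I would mimic the minimal-distance argument of Proposition~\ref{prop:sum_polymatroid_is_polymatroid}. Among all lifts $u\in\widetilde J$ with $d_R(u)=x+e_j-e_k$ for some $k$ (starting from $u=w$, $k=j$), choose one minimizing $\|u-w'\|_1$. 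If $x_k\le y_k$, then some edge $g\in E^k$ has $u_g<w'_g$, and the exchange axiom in $\widetilde J$ produces $u+e_g-e_h$ with $u_h>w'_h$. This is a lift of $x+e_j-e_{k'}$, where $k'$ is the block containing $h$, and it has strictly smaller distance, a contradiction. Hence $x_k>y_k$.

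Alternatively, this step can be shortcut by the Minkowski-sum viewpoint: the block-sum image equals the sum of restrictions of $\widetilde J$. I would prefer the direct argument above, since it reuses Proposition~\ref{prop:sum_polymatroid_is_polymatroid}'s technique verbatim.

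\emph{Step 3: rank function.} Since $d_R$ sums over blocks, for $A\subseteq[m]$ we have
\[
    f_G(A)=\max_{w\in\widetilde J}\sum_{e\in\bigcup_{j\in A}E^j}w_e=\tilde f\Bigl(\bigcup_{j\in A}E^j\Bigr).
\]
An edge $ij$ lies in $\bigcup_{j\in A}E^j$ exactly when $j\in A$, so the set of $i$ whose star meets this union is $N_G(A)$. Therefore $f_G(A)=f(N_G(A))$.
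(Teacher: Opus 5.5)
Your argument is correct, but for M-convexity it takes a different route from the paper. The paper does not prove M-convexity itself. It cites Murota's theorem on network induction and only computes the rank function. For the upper bound it uses $\beta(A)\le d_L(w)(N_G(A))\le f(N_G(A))$. For the lower bound it takes a maximizing $\alpha\in J$ and concentrates each $\alpha_i$ on one edge $ij_i$, with $j_i\in A$ exactly when $i\in N_G(A)$.

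You instead factor induction as a pullback along $d_L$ followed by aggregation along $d_R$, and you check the exchange axiom for each piece. Both checks hold up. In Step~2 the move $u\mapsto u+e_g-e_h$ lowers $\|u-w'\|_1$ by exactly $2$. So a minimizer must have $x_k>y_k$, which forces $k\neq j$.

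Your Step~3 is the paper's concentration argument routed through $\tilde f$. You wrote only the lower bound for $\tilde f$. The upper bound $w(F)\le d_L(w)(\{i: E_i\cap F\neq\varnothing\})$ is immediate. For $i$ outside that set, place $\alpha_i$ on any edge of the star $E_i$, which is nonempty.

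What your route buys is self-containedness plus a structural point. $J_1+J_2$ is the aggregation of $J_1\oplus J_2$ along the blocks $\{(i,1),(i,2)\}$. So your Step~2 directly generalizes the proof of Proposition~\ref{prop:sum_polymatroid_is_polymatroid}. This matches, at the level of proofs, the paper's later remark that union is a special case of induction. The paper's route is shorter, at the cost of an external citation.

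Do drop the aside about a Minkowski-sum shortcut. The values of $w$ on different blocks are coupled through the constraint $d_L(w)\in J$. So $d_R(\widetilde J)$ is not in general a Minkowski sum of pieces of $\widetilde J$; that description holds only on a single fiber $d_L(w)=\alpha$. Your direct argument is what carries the step.
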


\begin{proof}
    The construction above is the bipartite specialization of the network induction considered by Murota, so Theorem~3.7(2) of \cite{murota2020basicoperationsrelatednetwork} shows that $\operatorname{Ind}_G(J)$ is M-convex.

    Let $g$ denote its rank function. If $\beta=d_R(w)\in\operatorname{Ind}_G(J)$, then every edge whose weight contributes to $\beta(A)$ has its left endpoint in $N_G(A)$. Therefore,
    \[
        \beta(A)
        \leq d_L(w)\bigl(N_G(A)\bigr)
        \leq f\bigl(N_G(A)\bigr).
    \]
    Taking the maximum over $\beta\in\operatorname{Ind}_G(J)$ gives
    \[
        g(A)\leq f\bigl(N_G(A)\bigr).
    \]

    For the reverse inequality, choose $\alpha\in J$ such that
    $\alpha\bigl(N_G(A)\bigr)=f\bigl(N_G(A)\bigr)$. For each
    $i\in N_G(A)$, choose a neighbor $j_i\in A$, and for each remaining
    $i$, choose any neighbor $j_i$. Then $j_i\in A$ exactly when
    $i\in N_G(A)$. Define $w$ by placing weight $\alpha_i$ on $ij_i$
    for each $i\in[n]$ and weight zero on every other edge. We then have
    $d_L(w)=\alpha$ and
    \[
        d_R(w)(A)
        =
        \sum_{i\in N_G(A)}\alpha_i
        =
        f\bigl(N_G(A)\bigr).
    \]
    Thus $g(A)\geq f\bigl(N_G(A)\bigr)$, proving the formula.
\end{proof}

Suppose now that $M$ is a matroid on $[n]$. In Theorem~11.2.12 of \cite{oxleyMatroidTheory2011}, the matroid induced from $M$ by $G$ is defined by declaring $X\subseteq[m]$ independent when $X$ can be matched in $G$ to an independent set of $M$.

\begin{corollary}
    If $M$ is a matroid on $[n]$, then
    \[
        J_{\operatorname{Ind}_G(M)}
        =
        \Ret_{\ones}\bigl(\operatorname{Ind}_G(J_M)\bigr).
    \]
\end{corollary}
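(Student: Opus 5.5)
The plan is to compare the two matroids through their bases, with the basis-level description of the retraction doing most of the work. Write $K=\operatorname{Ind}_G(J_M)$. Since $J_M\subseteq\{0,1\}^n$, the condition $d_L(w)\in J_M$ with $w\geq 0$ integral forces $w\in\{0,1\}^{E(G)}$. Moreover, $w$ has exactly one edge at each vertex of some basis $B$ of $M$ and no edges at vertices outside $B$. So $w$ is a function $\sigma\colon B\to[m]$ with $i\sigma(i)\in E(G)$, and $d_R(w)$ is the vector of fibre sizes of $\sigma$.

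Applying $\proj_{\ones}$ then gives the indicator vector of the image $\sigma(B)\subseteq[m]$. Hence
\[
\proj_{\ones}(K)=\{\ones_{\sigma(B)} \mid B\in\mathcal{B}(M),\ \sigma\colon B\to[m] \text{ compatible with } G\}.
\]
By definition, $\Ret_{\ones}(K)$ consists of the inclusion-maximal members of this family. Theorem~\ref{theorem:retraction_is_polymatroid} guarantees that these form the bases of a matroid on $[m]$.

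Next I identify the independent sets of $\Ret_{\ones}(K)$, namely the subsets of its bases, with the independent sets of $\operatorname{Ind}_G(M)$. Suppose first that $X\subseteq\sigma(B)$. For each $j\in X$, choose one preimage $i_j\in B$. The map $j\mapsto i_j$ is injective, since $\sigma$ is a function, and it uses edges of $G$. It therefore matches $X$ to the subset $\{i_j\}$ of $B$, which is independent in $M$. So $X$ is independent in $\operatorname{Ind}_G(M)$.

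Conversely, suppose $X$ is matched in $G$ to an independent set $I$ of $M$. Extend $I$ to a basis $B$. Define $\sigma$ on $I$ by the matching. On $B\setminus I$, send each $i$ to an arbitrary neighbour in $[m]$, which exists because $N_G([m])=[n]$. Then $X\subseteq\sigma(B)$, so $\ones_X\leq\proj_{\ones}(\alpha)$ for some $\alpha\in K$. By Lemma~\ref{lemma:projection_always_independent}, $\ones_X$ lies below some basis of $\Ret_{\ones}(K)$, so $X$ is independent there.

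Two matroids with the same independent sets have the same bases, which gives $J_{\operatorname{Ind}_G(M)}=\Ret_{\ones}(K)$. The only delicate point I anticipate is the one just used: the independent sets of $\Ret_{\ones}(K)$ are exactly the sets lying below some element of $\proj_{\ones}(K)$, not merely below its maximal elements. Lemma~\ref{lemma:projection_always_independent} handles this directly.

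As a cross-check, one could compare rank functions instead. Corollary~\ref{cor:both_reflections_same} and Proposition~\ref{prop:bipartite_induction_rank} give
\[
\min_{Y\subseteq A}\bigl(r_M(N_G(Y))+|A\setminus Y|\bigr),
\]
which is Rado's formula for the rank of the induced matroid. However, citing Rado's theorem would be less self-contained than the direct argument above.
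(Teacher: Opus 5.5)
Your proof is correct, but it takes a different route from the paper.

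\textbf{The paper's route.} It works entirely with rank functions. Proposition~\ref{prop:bipartite_induction_rank} gives the rank function $A\mapsto r_M(N_G(A))$ of $\operatorname{Ind}_G(J_M)$. Corollary~\ref{cor:both_reflections_same} turns the retraction into $\min_{B\subseteq A}\bigl(r_M(N_G(B))+|A\setminus B|\bigr)$. The paper then cites Oxley's Corollary~11.2.14 (Rado's rank formula for induced matroids) to identify this with the rank function of $\operatorname{Ind}_G(M)$. So the paper's proof is exactly your ``cross-check.''

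\textbf{Your route.} You work with bases directly. You parametrize $K=\operatorname{Ind}_G(J_M)$ by compatible maps $\sigma\colon B\to[m]$ and note that $\proj_{\ones}$ records the image $\sigma(B)$. You then match independent sets on the two sides, using Lemma~\ref{lemma:projection_always_independent} to show that the downward closure of $\Ret_{\ones}(K)$ equals that of $\proj_{\ones}(K)$.

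\textbf{What each buys.} The paper's proof is two lines and showcases the Galois-connection rank formula. However, it depends logically on Rado's theorem. Your argument needs no external matroid theory. Combined with the paper's rank computation, it actually yields Rado's rank formula as a consequence. Via Theorem~\ref{theorem:retraction_is_polymatroid}, it also shows that $\operatorname{Ind}_G(M)$ is a matroid.

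\textbf{One citation to add.} Theorem~\ref{theorem:retraction_is_polymatroid} applies only to M-convex sets. The M-convexity of $K$ is the first part of Proposition~\ref{prop:bipartite_induction_rank}, which rests on Murota's theorem. Strictly speaking, you can avoid that step for the set equality. $\Ret_{\ones}(K)$ is an antichain, so it equals the set of maximal members of its own downward closure. You have shown that this downward closure is exactly the family of subsets of $[m]$ that can be matched onto independent sets of $M$.
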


\begin{proof}
    By Proposition~\ref{prop:bipartite_induction_rank} and the rank formula for retraction, the rank function of the right-hand side is
    \[
        A\longmapsto
        \min_{B\subseteq A}
        \left(
            r_M\bigl(N_G(B)\bigr)+|A\setminus B|
        \right).
    \]
    By Corollary~11.2.14 of \cite{oxleyMatroidTheory2011}, this is the rank function of $\operatorname{Ind}_G(M)$.
\end{proof}

The corollary identifies ordinary matroid induction with the matroid
retraction of polymatroid induction. This suggests imposing an
arbitrary coordinatewise bound on the output by applying
$\kappa$-retraction.

\begin{definition}
    Let $\kappa\in\mathbb{Z}_{\geq 0}^m$. The \emph{$\kappa$-caged induction of $J$ along $G$} is
    \[
        \operatorname{Ind}_G^\kappa(J)
        =
        \Ret_\kappa\bigl(\operatorname{Ind}_G(J)\bigr).
    \]
\end{definition}

Equivalently, $\operatorname{Ind}_G^\kappa(J)$ consists of the coordinatewise maximal elements of
\[
    \left\{
        d_R(w)\wedge\kappa:
        w\in\mathbb{Z}_{\geq 0}^{E(G)}
        \text{ and }
        d_L(w)\in J
    \right\}.
\]
Its rank function is
\[
    f_G^\kappa(A)
    =
    \min_{B\subseteq A}
    \left(
        f\bigl(N_G(B)\bigr)
        +
        \kappa(A\setminus B)
    \right).
\]
\begin{remark}
    The preceding construction also contains polymatroid union. Let $J_1$ and $J_2$ be M-convex sets on $[n]$, and consider the M-convex set
    \[
        J_1\oplus J_2
        =
        \{(\alpha_1,\alpha_2)\mid
        \alpha_1\in J_1,\ \alpha_2\in J_2\}
    \]
    on two disjoint copies of $[n]$. Let $G$ have left vertex set
    $[n]\times\{1,2\}$ and right vertex set $[n]$, where $(i,t)$ is
    adjacent only to $i$. Then
    \[
        \operatorname{Ind}_G(J_1\oplus J_2)=J_1+J_2.
    \]
    If, in addition, $J_1$ and $J_2$ are $\kappa$-caged, then
    \[
        \operatorname{Ind}_G^\kappa(J_1\oplus J_2)
        =
        \Ret_\kappa(J_1+J_2)
        =
        J_1\vee J_2.
    \]
    Thus Proposition~\ref{prop:sum_polymatroid_is_polymatroid} and
    Theorem~\ref{thm:polymatroid_union_rank} are special cases of
    Proposition~\ref{prop:bipartite_induction_rank} and its caged version.
\end{remark}

\section{Polynomials Supported on Polymatroids}\label{section:polynomials_supported_on_polymatroids}

The goal of this section is to set up a common language for Lorentzian polynomials and representations of polymatroids. Both may be viewed as assigning coefficients to the points of an M-convex set. From this point of view, the embedded minor operations on M-convex sets induce corresponding operations on the coefficients.

Embedded minors of polymatroid representations were studied in \cite{baker2025representationtheorypolymatroids}. The definitions below are equivalent to the corresponding operations in \cite{baker2025representationtheorypolymatroids}, but are phrased in terms of exponential generating functions supported on M-convex sets. This has the advantage that the same notation applies both to Lorentzian polynomials and to representations over tracts. In particular, Theorem~\ref{theorem:sections_and_embedded_minors} allows us to define retractions of these objects by choosing a section of $\RetK(J)$ inside $J$.

\subsection{Monoids and Tracts}

We begin by recalling the algebraic objects which will serve as the coefficients.

\begin{definition}
    A \emph{monoid} is a set $F$ together with a binary operation $\cdot \colon F\times F\to F$ which is associative and has an identity element $1\in F$. We say that $F$ is \emph{commutative} if $a\cdot b=b\cdot a$ for all $a,b\in F$.

    A monoid $F$ has a \emph{zero} if there is an element $0\in F$ such that $0\cdot a=a\cdot 0=0$ for all $a\in F$.
\end{definition}

For Lorentzian polynomials, the relevant monoid is $\R_{\geq 0}$ under ordinary multiplication. For representations, the monoid carries the additional structure of a \emph{tract}.

Intuitively, a tract is a field-like object in which addition is not necessarily a binary operation. Instead, one keeps track of which formal sums are declared to be zero.

\begin{definition}
    A \emph{tract} is a pair $(F,N_F)$, where $F$ is a commutative monoid with zero such that
    \[
        F^\times = F\setminus \{0\}
    \]
    is an abelian group, and $N_F$ is an ideal in the semiring of finite formal sums
    \[
        \mathbb{N}[F^\times]
        =
        \left\{
            \sum_{i=1}^m n_i a_i
            \ \middle|\
            a_i\in F^\times,\ n_i\in\Zgeq
        \right\}.
    \]
    The ideal $N_F$ is called the \emph{null set}. We require that, for every $a\in F$, there is a unique $b\in F$ such that the formal sum $a+b$ belongs to $N_F$. We call this element $b$ the \emph{additive inverse} of $a$ and denote it by $-a$.

    The monoid $F$ embeds into $\mathbb{N}[F^\times]$ by sending $0$ to the empty sum, which is the additive identity of $\mathbb{N}[F^\times]$, and sending $a\in F^\times$ to the one-term formal sum $a=1\cdot a$.
\end{definition}

\begin{example}
    We list three basic examples of tracts.
    \begin{enumerate}
        \item \textbf{Fields.} Let $k$ be a field. The associated field tract has underlying multiplicative monoid $k$. The null set consists of the formal sums which evaluate to zero in $k$:
        \[
            N_k
            =
            \left\{
                \sum_{i=1}^m a_i \in \mathbb{N}[k^\times]
                \ \middle|\
                \sum_{i=1}^m a_i=0 \text{ in } k
            \right\}.
        \]
        Thus tracts generalize fields.

        \item \textbf{The Krasner hyperfield $\mathbb{K}$.} The Krasner hyperfield has underlying set $\{0,1\}$, with the usual multiplication. Its null set consists of the empty sum and all formal sums with at least two nonzero terms:
        \[
            N_{\mathbb{K}}
            =
            \{0\}
            \cup
            \{1+\cdots+1 \mid \text{there are at least two summands}\}.
        \]
        Intuitively, one should think of $1\in\mathbb{K}$ as representing an unspecified nonzero element of a field. The relations
        \[
            1+1\in N_{\mathbb{K}}
            \qquad\text{and}\qquad
            1+1+1\in N_{\mathbb{K}}
        \]
        reflect the fact that the sum of two nonzero field elements may be zero or nonzero.

        \item \textbf{The tropical hyperfield $\mathbb{T}$.} The tropical hyperfield has underlying set $\R\cup\{-\infty\}$. Multiplication is ordinary addition on $\R$, with identity $0$ and absorbing element $-\infty$. The null set consists of the empty sum together with the nonempty 
        formal sums for which the maximum of the $c_i$ is attained at least twice.
    \end{enumerate}
    In both the Krasner and tropical hyperfields, we have $1=-1$. This feature is important in the theory of polymatroid representations.
\end{example}

\begin{definition}
    Let $(F_1,N_{F_1})$ and $(F_2,N_{F_2})$ be tracts. A \emph{morphism of tracts} $\varphi\colon F_1\to F_2$ is a homomorphism of monoids with zero such that the induced map
    \[
        \varphi_* \colon \mathbb{N}[F_1^\times]\to \mathbb{N}[F_2^\times]
    \]
    sends $N_{F_1}$ into $N_{F_2}$. If $\varphi$ is bijective and its inverse is also a morphism of tracts, then $\varphi$ is an \emph{isomorphism}.
\end{definition}

For a more detailed discussion of tracts and the related notion of pastures, see \cite{baker2020foundationsmatroidsimatroids}. For polymatroid representations over tracts, see \cite{baker2025representationtheorypolymatroids}.

\subsection{Exponential Generating Functions with Coefficients}

We now introduce the objects which will be used for both Lorentzian polynomials and polymatroid representations.

Fix ordered variables $w=(w_1,\dots,w_n)$. For $\alpha\in\Zgeqn$, write
\[
    \alpha! = \prod_{i=1}^n \alpha_i!,
    \qquad
    w^\alpha = \prod_{i=1}^n w_i^{\alpha_i}.
\]

\begin{definition}
    For a commutative monoid with zero $F$, we write $\mathcal{E}_F(w)$ for the set of finite formal exponential generating functions
    \[
        f(w)=\sum_{\alpha\in\Zgeqn} c_\alpha \frac{w^\alpha}{\alpha!},
        \qquad c_\alpha\in F.
    \]
    Here ``finite'' means that all but finitely many coefficients $c_\alpha$ are zero, and the division by $\alpha!$ is purely formal. The \emph{support} of $f$ is
    \[
        \supp(f)=\{\alpha\in\Zgeqn \mid c_\alpha\neq 0\}.
    \]
    We write
    \[
        \mathcal{E}^{\mathrm{mc}}_F(w)
        =
        \{f\in\mathcal{E}_F(w) \mid \supp(f) \text{ is M-convex}\}.
    \]
\end{definition}

The normalization by $\alpha!$ is convenient because translation of exponent vectors agrees formally with repeated differentiation and integration, without introducing extra scalar factors.

\begin{definition} \label{def:projective_equivalence}
    Suppose that every nonzero element of $F$ is a unit. Two elements $f,g\in\mathcal{E}_F(w)$ are \emph{projectively equivalent} if there exists $\lambda\in F^\times$ such that $f=\lambda g$.
\end{definition}

We emphasize that the notion of projective equivalence in Definition~\ref{def:projective_equivalence} uses a global scalar relating the two generating functions.

Lorentzian polynomials naturally give elements of $\mathcal{E}^{\mathrm{mc}}_{\R_{\geq 0}}(w)$. Similarly, a polymatroid representation over a tract $F$ gives an element of $\mathcal{E}^{\mathrm{mc}}_F(w)$ whose coefficients satisfy the appropriate Grassmann-Pl\"ucker relations. Thus both settings can be treated as coefficient data supported on an M-convex set.

We now define deletion, contraction, and translation for elements of $\mathcal{E}^{\mathrm{mc}}_F(w)$. These operations are chosen so that their supports agree with the corresponding embedded minors of M-convex sets. To avoid confusing translation of polynomials with addition of polynomials, we denote translation by $T_\tau f$ rather than $f+\tau$.

\begin{definition}\label{def:embedded_minor_gen_poly}
    Let
    \[
        f(w)=\sum_{\alpha\in J} c_\alpha \frac{w^\alpha}{\alpha!}
    \]
    be an element of $\mathcal{E}^{\mathrm{mc}}_F(w)$ with support $J$. Let $\delminus$ and $\delplus$ denote the coordinatewise minimum and maximum vectors of $J$.

    \begin{enumerate}
        \item If $\tau\in\mathbb{Z}^n$ satisfies $\tau\geq -\delminus$, define the \emph{translation} of $f$ by $\tau$ to be
        \[
            T_\tau f
            \coloneqq
            \sum_{\alpha\in J}
            c_\alpha
            \frac{w^{\alpha+\tau}}{(\alpha+\tau)!}.
        \]

        \item If $\nu\in\Zgeqn$ is effectively coindependent in $J$, define the \emph{deletion} of $\nu$ from $f$ by
        \[
            f\setminus \nu
            \coloneqq
            \sum_{\alpha\in J\setminus \nu}
            c_\alpha
            \frac{w^\alpha}{\alpha!}.
        \]

        \item If $\mu\in\Zgeqn$ is effectively independent in $J$, define the \emph{contraction} of $\mu$ from $f$ by
        \[
            f/\mu
            \coloneqq
            \sum_{\alpha-\mu\in J/\mu}
            c_\alpha
            \frac{w^{\alpha-\mu}}{(\alpha-\mu)!}.
        \]
    \end{enumerate}
\end{definition}

Translation shifts the exponent vectors without changing the coefficients. Deletion and contraction remove terms: deletion keeps the terms whose exponents remain in $J\setminus\nu$, while contraction keeps the terms which survive the contraction and then shifts the support by $-\mu$. By construction,
\[
    \supp(T_\tau f)=J+\tau,
    \qquad
    \supp(f\setminus\nu)=J\setminus\nu,
    \qquad
    \supp(f/\mu)=J/\mu.
\]

\begin{remark}\label{remark:deletion_contraction_match_truncation}
    These operations can also be described using the truncation notation of \cite{ross2023duallylorentzianpolynomials}. If $f_{\leq u}$ denotes the polynomial obtained by keeping only terms with exponent at most $u$, and $f_{\geq \ell}$ denotes the polynomial obtained by keeping only terms with exponent at least $\ell$, then
    \[
        f\setminus\nu = f_{\leq \delplus - \nu},
        \qquad
        f/\mu = \partial^\mu\left(f_{\geq \delminus+\mu}\right).
    \]
    Here $\partial^\mu$ denotes the partial derivative with multiplicity vector $\mu$. The exponential normalization is what makes the contraction formula agree with this description.
\end{remark}

We now define $\kappa$-retractions of these objects. Let
\[
    f(w)=\sum_{\alpha\in J} c_\alpha \frac{w^\alpha}{\alpha!}
\]
be an element of $\mathcal{E}^{\mathrm{mc}}_F(w)$ with support $J$, and fix a cage $\kappa$. Since every exponent of $f$ lies in $J$, replacing $\kappa$ by its coordinatewise minimum with $\delplus$ does not change $\kproj(J)$ or $\RetK(J)$. Therefore, in the embedded-minor description below, we assume without loss of generality that $\kappa\leq\delplus$. 

The retraction of the support alone does not determine which coefficients should be kept. To specify the surviving coefficients, we choose a section of the projection map
\[
    \kproj\colon \Jmax \to \RetK(J).
\]

\begin{definition}\label{def:Ret_polynomial}
    Suppose that $\tau\in\Zgeqn$ is such that $\RetK(J)+\tau$ is a section of $\kproj$ contained in $\Jmax$. We define the \emph{$\tau$-section retraction} of $f$ to be
    \[
        \RetK^\tau(f)
        \coloneqq
        \sum_{x\in \RetK(J)}
        c_{x+\tau}
        \frac{w^x}{x!}.
    \]
\end{definition}

Thus, $\RetK^\tau(f)$ is obtained by restricting the coefficients of $f$ to the section $\RetK(J)+\tau\subseteq J$, and then translating that section back down to $\RetK(J)$. In particular,
\[
    \supp(\RetK^\tau(f))=\RetK(J).
\]

Corollary~\ref{corollary:description_of_embedded_minor} gives an explicit description of the $\tau$-section retraction. Fix $\alpha\in\Jmax$, and set
\[
    \tau=\alpha-\kproj(\alpha)
    \qquad\text{and}\qquad
    \nu=\delplus-\kappa-\tau.
\]
Since $\tau \leq \delplus - \kappa$, this gives a nonnegative deletion vector $\nu$. The corollary gives that
\[
    \RetK(J)=J\setminus\nu-\tau,
\]
or equivalently $J\setminus\nu=\RetK(J)+\tau$. Thus, on the level of supports, the $\tau$-section retraction is obtained by first deleting $\nu$ and then translating by $-\tau$. The same holds for $f$:
\[
    \RetK^\tau(f)=T_{-\tau}(f\setminus\nu).
\]
Indeed, if
\[
    f(w)=\sum_{\beta\in J} c_\beta \frac{w^\beta}{\beta!},
\]
then
\begin{align*}
    T_{-\tau}(f\setminus\nu)
    &=
    T_{-\tau}
    \left(
        \sum_{\beta\in J\setminus\nu}
        c_\beta \frac{w^\beta}{\beta!}
    \right) \\
    &=
    T_{-\tau}
    \left(
        \sum_{\beta\in \RetK(J)+\tau}
        c_\beta \frac{w^\beta}{\beta!}
    \right) \\
    &=
    \sum_{x\in\RetK(J)}
    c_{x+\tau}\frac{w^x}{x!}
    =
    \RetK^\tau(f).
\end{align*}

The notation includes $\tau$ because, for arbitrary coefficient data, different sections may produce different polynomials. In the next sections, we impose additional structure on the coefficients. For Lorentzian polynomials and representations over a tract, the relevant choices of section behave well; in particular, the resulting retractions can be compared up to projective equivalence as in Definition~\ref{def:projective_equivalence}.

\section{Lorentzian Polynomials and Caged Retractions}
\label{section:lorentzian_polynomials}

The goal of this section is to show that the Lorentzian property is preserved by caged retractions. We first observe that embedded minors preserve the Lorentzian property. We then prove that different choices of section give projectively equivalent Lorentzian polynomials.

We begin by recalling the definition of Lorentzian polynomials.

\begin{definition}\label{def:lorentzian_poly}
    The set of \emph{Lorentzian polynomials} of degree $r$ in $n$ variables is denoted by $\Lnr$. For $r=0$ or $r=1$, every nonzero homogeneous polynomial of degree
    $r$ with nonnegative coefficients is Lorentzian. For $r \geq 2$, a homogeneous polynomial
    \[
        f(w)\in\R_{\geq 0}[w_1,\dots,w_n]
    \]
    of degree $r$ lies in $\Lnr$ if both of the following conditions hold:
    \begin{enumerate}
        \item The support $\supp(f)$ is an M-convex set.
        \item For every $\beta\in\Delta_n^{r-2}$, the Hessian $\hess(\partial^\beta f)$ has at most one positive eigenvalue.
    \end{enumerate}
    Here $\hess$ denotes the Hessian matrix of second derivatives.
\end{definition}

We say that a polynomial has the Lorentzian property if it lies in $\Lnr$ for some $n$ and $r$. For an in-depth discussion of Lorentzian polynomials, including alternative characterizations, see \cite{branden2024lorentzianpolynomials}.

In the language of the previous section, a Lorentzian polynomial is an element of $\mathcal{E}^{\mathrm{mc}}_{\R_{\geq 0}}(w)$ whose coefficients satisfy the Hessian condition. Homogeneity follows from the support condition, since every M-convex set lies in $\dnr$ for some $r$.

Throughout this section, we write a Lorentzian polynomial with support $J$ in exponential form as
\[
    f(w)=\sum_{\alpha\in J} c_\alpha \frac{w^\alpha}{\alpha!},
    \qquad c_\alpha>0.
\]

\begin{theorem}\label{thm:embedded_minor_of_lorentzian_works}
    Let $f\in\Lnr$ with $\supp(f)=J$. Let $\mu,\nu\in\Zgeqn$ be effectively independent and effectively coindependent in $J$, respectively, and let $\tau\in\mathbb{Z}^n$ satisfy $\tau\geq-\delminus$. Then:
    \begin{enumerate}
        \item $f\setminus\nu$ is Lorentzian.
        \item $f/\mu$ is Lorentzian.
        \item $T_\tau f$ is Lorentzian.
    \end{enumerate}
\end{theorem}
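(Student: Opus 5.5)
We will treat the three operations separately, and in each case reduce to standard closure properties of Lorentzian polynomials from \cite{branden2024lorentzianpolynomials}. The key tools are: (a) Lorentzian polynomials are closed under partial derivatives and under nonnegative linear changes of variables; (b) the \emph{normalization operator} $N(w^\alpha)=w^\alpha/\alpha!$ preserves Lorentzian polynomials; and (c) for M-convex supports, the Lorentzian property of a polynomial $g=\sum a_\alpha w^\alpha$ is equivalent to log-concavity conditions on the coefficients. Since the paper's polynomials are written in exponential form, we pass between $f$ and its ordinary form via $N$ and its inverse, keeping track of which closure statements apply in which normalization.

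\textbf{Contraction.} By Remark~\ref{remark:deletion_contraction_match_truncation}, $f/\mu=\partial^\mu(f_{\geq\delminus+\mu})$. By induction on $|\mu|$, using $J/(\mu_1+\mu_2)=(J/\mu_1)/\mu_2$, it suffices to treat $\mu=e_i$. If $(\delminus)_i\ge1$, then $f_{\geq\delminus+e_i}=f$ and $f/e_i=\partial_i f$, which is Lorentzian by closure under derivatives. If $(\delminus)_i=0$, then we keep the terms with $\alpha_i\ge1$ and differentiate once. This equals $\partial_i f$ evaluated on the same support, since differentiation kills exactly the terms with $\alpha_i=0$. So in all cases $f/e_i=\partial_i f$ (up to the formal $\alpha!$ conventions, which match the exponential normalization), and it is Lorentzian.

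\textbf{Deletion.} Again reduce to $\nu=e_i$. Deleting $e_i$ removes the top layer $\alpha_i=(\delplus)_i$. This is dual to contraction: set $d=(\delplus)_i$ and apply the \emph{reversal} $g(w)\mapsto w_i^{d}g(\dots,w_i^{-1},\dots)$. Brändén--Huh show that M-convex Lorentzian polynomials are preserved by this reversal after suitable normalization; in the exponential form, the coefficients are simply reindexed by $\alpha_i\mapsto d-\alpha_i$. After the reversal, removing the top layer becomes removing the bottom layer, which the contraction argument handles; we then reverse back. An alternative I would try first is more direct: $f\setminus e_i$ is the limit of a specialization that kills the top-degree part in $w_i$. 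Concretely, use a coordinate truncation $f_{\le u}$, which \cite{ross2023duallylorentzianpolynomials} shows preserves the Lorentzian property in the normalized (exponential) form. I expect this step to be the main obstacle, because the truncation result must be applied in the correct normalization, and the reversal is only Lorentzian-preserving in the normalized form.

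\textbf{Translation.} For $\tau\ge0$, $T_\tau$ is iterated formal integration $w^\alpha/\alpha!\mapsto w^{\alpha+e_i}/(\alpha+e_i)!$, i.e.\ the antiderivative in $w_i$. The support $J+\tau$ is M-convex. For the Hessian condition, note that $\partial^{\beta}$ of $T_\tau f$ is either some $T_{\tau'}\partial^{\beta'}f$ or lands on lower-degree pieces. I would instead use the coefficient characterization: in ordinary form the coefficients become $c_\alpha/(\alpha+\tau)!$, and ultra-log-concavity-type conditions with the factorials are exactly invariant under shifting. For negative entries of $\tau$ (which require $\tau\ge-\delminus$), $T_\tau$ is the genuine derivative $\partial^{-\tau}$ with no terms lost, so closure under derivatives applies. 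Combining the two signs coordinatewise finishes the proof.
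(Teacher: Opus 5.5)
Your proposal has genuine gaps, and the most serious one is the positive translation step. There is no characterization of the Lorentzian property by coefficient log-concavity inequalities once $n\ge 3$. The condition is that each matrix $(c_{\beta+e_u+e_v})_{u,v}$ has at most one positive eigenvalue, which is a spectral condition. For example, $w_1w_2+w_3w_4+\epsilon(w_1w_3+w_1w_4+w_2w_3+w_2w_4)$ has M-convex support (the bases of $U_{2,4}$) and satisfies all $2\times 2$ inequalities trivially, since it has no square terms. Yet for $0<\epsilon<1/2$ its Hessian has the two positive eigenvalues $1\pm 2\epsilon$.

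Moreover, ``invariance under shifting'' misses what actually has to be checked. Take $g=T_{e_1}f$ and $\beta\in\Delta_n^{r-1}$. If $\beta_1\ge 1$, then $\partial^\beta g=\partial^{\beta-e_1}f$, so nothing new arises. If $\beta_1=0$, the quadratic $\partial^\beta g$ is the $w_1$-integral of a linear form and corresponds to no condition on $f$. Saying it is ``some $T_{\tau'}\partial^{\beta'}f$'' is circular, because whether integration preserves the property is exactly the question. The missing step is a direct computation. Every term of $g$ is divisible by $w_1$, so $\partial^\beta g=aw_1^2+\sum_{j\ge 2}b_jw_1w_j$. Its Hessian has a zero $(n-1)\times(n-1)$ principal block, and Cauchy interlacing then gives at most one positive eigenvalue. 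This step, not deletion, is where the real work lies.

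Your contraction step is also wrong as written. The truncation $f_{\ge\delminus+e_i}$ always discards the layer $\alpha_i=(\delminus)_i$, which is nonempty by the definition of $\delminus$, so it never equals $f$. Hence $f/e_i=\partial_i f$ holds only when $(\delminus)_i=0$. For instance, $J=\{(1,1),(2,0)\}$ has $J/e_1=\{(1,0)\}$, while $\partial_1 f$ has support $\{(0,1),(1,0)\}$. You can repair this in either of two ways. You can cite the lower truncation $f_{\ge\ell}$ result of \cite{ross2023duallylorentzianpolynomials} together with closure under derivatives, as the paper does. Alternatively, write $f/e_i=T_{de_i}\partial_i^{d+1}f$ with $d=(\delminus)_i$, which reduces contraction to the translation case above.

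For deletion, drop the reversal. The map $w_i^{d}g(\dots,w_i^{-1},\dots)$ sends $w^\alpha$ to a monomial of degree $r+d-2\alpha_i$, so the result is not homogeneous in general and there is no such closure property to invoke. Your fallback, citing the upper truncation $f_{\le\delplus-\nu}$ from \cite{ross2023duallylorentzianpolynomials}, is exactly the paper's argument. Your treatment of negative translations as derivatives with no terms lost also matches the paper.
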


\begin{proof}
    By Remark~\ref{remark:deletion_contraction_match_truncation}, deletion and contraction are described by truncation and differentiation. These operations preserve the Lorentzian property by Proposition~3.3 of \cite{ross2023duallylorentzianpolynomials}, together with the fact that differentiation preserves the Lorentzian property. Thus it remains to prove that translations preserve the Lorentzian property.

    Since any translation is a composition of translations by $\pm e_i$, it suffices to consider translation by a single coordinate vector. Translation by $-e_i$ is differentiation with respect to $w_i$, so it preserves the Lorentzian property. It remains to consider translation by $+e_i$. Without loss of generality, assume $i=1$.

    Let
    \[
        g=T_{e_1}f.
    \]
    Equivalently,
    \[
        g(w_1,\dots,w_n)
        =
        \int_0^{w_1} f(t,w_2,\dots,w_n)\,dt.
    \]
    Thus
    \[
        g(w)
        =
        \sum_{\alpha\in J}
        c_\alpha
        \frac{w^{\alpha+e_1}}{(\alpha+e_1)!}.
    \]
    The support of $g$ is $J+e_1$, which is M-convex, and $g$ is homogeneous of degree $r+1$. It remains to check the Hessian condition.

    If $r=0$, then $g$ has degree $1$ and is Lorentzian by definition.
    We may therefore assume that $r\geq 1$.

    Let $\beta\in\Delta_n^{r-1}$. We show that the Hessian of $\partial^\beta g$ has at most one positive eigenvalue. If $\beta_1\geq 1$, then
    \[
        \partial^\beta g
        =
        \partial^{\beta-e_1} f,
    \]
    so the Hessian condition follows from the fact that $f$ is Lorentzian.

    Now suppose $\beta_1=0$. If $\partial^\beta g$ is identically zero, there is nothing to prove. Otherwise, since every term of $g$ is divisible by $w_1$ and $\partial^\beta g$ has degree two, we can write
    \[
        \partial^\beta g
        =
        a w_1^2+\sum_{j=2}^n b_j w_1w_j
    \]
    for some $a,b_j\in\R_{\geq 0}$. Its Hessian has the form
\[
    \begin{pmatrix}
        2a & b^T \\
        b & 0
    \end{pmatrix},
\]
where $b=(b_2,\dots,b_n)^T$. By the Cauchy interlacing theorem, this matrix has at most one positive eigenvalue, as desired. It follows that $T_\tau f$ is Lorentzian for every allowable translation vector $\tau$.
\end{proof}

We have established that embedded minors preserve the Lorentzian property. Applying this to the embedded-minor description of caged retractions gives the following corollary.

\begin{corollary}\label{cor:section_retraction_lorentzian}
    Let $f\in\Lnr$ with $\supp(f)=J$, and let $\tau$ define a section of
    \[
        \kproj\colon \Jmax\to\RetK(J).
    \]
    Then $\RetK^\tau(f)$ is Lorentzian.
\end{corollary}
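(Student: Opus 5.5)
The plan is to realize $\RetK^\tau(f)$ as a composition of embedded minor operations applied to $f$, and then invoke Theorem~\ref{thm:embedded_minor_of_lorentzian_works}.

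First, as in the discussion preceding Definition~\ref{def:Ret_polynomial}, we may replace $\kappa$ by $\kappa\wedge\delplus$ using Remark~\ref{remark:kappa_smaller_than_delplus}. This changes neither $\kproj(J)$ nor $\RetK(J)$, nor the set of section-defining vectors, so we may assume $\kappa\leq\delplus$. Since $\tau$ defines a section, we have $\tau=\alpha-\kproj(\alpha)$ for some $\alpha\in\Jmax$. This gives $0\leq\tau\leq\delplus-\kappa$, so
\[
    \nu\coloneqq\delplus-\kappa-\tau\in\Zgeqn .
\]

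Next we apply Corollary~\ref{corollary:description_of_embedded_minor}, which gives $J\setminus\nu=\RetK(J)+\tau$. The computation following Definition~\ref{def:Ret_polynomial} then shows
\[
    \RetK^\tau(f)=T_{-\tau}(f\setminus\nu).
\]
It remains to check that both operations are admissible in the sense of Theorem~\ref{thm:embedded_minor_of_lorentzian_works}.

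\emph{Deletion.} The vector $\nu$ is effectively coindependent in $J$, because $\alpha\leq\delplus-\nu$. Indeed, $\alpha=\kproj(\alpha)+\tau\leq\kappa+\tau=\delplus-\nu$.

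\emph{Translation.} The translation by $-\tau$ requires $-\tau\geq-\delta^-_{J\setminus\nu}$, that is, $\tau\leq\delta^-_{J\setminus\nu}$. This holds because every element of $J\setminus\nu=\RetK(J)+\tau$ has the form $x+\tau$ with $x\geq0$.

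Applying part~(1) of Theorem~\ref{thm:embedded_minor_of_lorentzian_works}, $f\setminus\nu$ is Lorentzian. Its support is $J\setminus\nu$, so part~(3) applied to $f\setminus\nu$ with translation vector $-\tau$ shows that $T_{-\tau}(f\setminus\nu)=\RetK^\tau(f)$ is Lorentzian.

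The only delicate point is verifying these admissibility conditions on $\nu$ and $-\tau$. Once Corollary~\ref{corollary:description_of_embedded_minor} is available, both reduce to the short inequalities above. I expect the main thing to be careful about is that Theorem~\ref{thm:embedded_minor_of_lorentzian_works} is stated for a translation bound relative to the support of the polynomial being translated. That is why it must be applied to $f\setminus\nu$ with $\delta^-_{J\setminus\nu}$, not to $f$ with $\delminus$.
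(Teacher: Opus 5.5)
Your proof is correct and follows the same route as the paper: write $\RetK^\tau(f)=T_{-\tau}(f\setminus\nu)$ via Corollary~\ref{corollary:description_of_embedded_minor}, then apply Theorem~\ref{thm:embedded_minor_of_lorentzian_works} first to $f$ (deletion) and then to $f\setminus\nu$ (translation). You also check explicitly that $\nu$ is effectively coindependent in $J$ and that $-\tau\geq-\delta^-_{J\setminus\nu}$, which fills in admissibility details the paper's two-line proof leaves implicit.
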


\begin{proof}
    By Corollary~\ref{corollary:description_of_embedded_minor}, the polynomial $\RetK^\tau(f)$ is obtained from $f$ by deletion followed by translation. The result follows from Theorem~\ref{thm:embedded_minor_of_lorentzian_works}.
\end{proof}

\begin{example}
    Let $J$ be any M-convex set, and let
    \[
        f(w)=\sum_{\alpha\in J}\frac{w^\alpha}{\alpha!}
    \]
    be its exponential generating polynomial. By Theorem~3.10 of \cite{branden2024lorentzianpolynomials}, this polynomial is Lorentzian. For every choice of section-defining vector $\tau$, the retraction $\RetK^\tau(f)$ is the exponential generating polynomial of $\RetK(J)$.
\end{example}

\begin{example}
    Consider the Lorentzian polynomial
    \[
        f(x,y)=\frac{6}{3!\cdot 0!}x^3+\frac{4}{2!\cdot 1!}x^2y+\frac{2}{1!\cdot 2!}xy^2.
    \]
    Let $J=\supp(f)$ and take the cage $\kappa=(1,1)$. Then
    \[
        \RetK(J)=\{(1,1)\}.
    \]
    There are two choices of section:
    \[
        \tau=(1,0)
        \qquad\text{and}\qquad
        \tau'=(0,1).
    \]
    The corresponding section retractions are
    \[
        \RetK^\tau(f)=4xy,
        \qquad
        \RetK^{\tau'}(f)=2xy.
    \]
    Thus different choices of section can produce different polynomials. However, in this example the two retractions are projectively equivalent. We now show that this occurs in general.
\end{example}

The following lemma is an immediate corollary of Theorem~1.11 of \cite{zhangSchurComplementIts2005}.

\begin{lemma}\label{lemma:1_pos_eigval_gives_rk1}
    Let $A\in\R^{m\times m}$ be a symmetric matrix and let $C\in\R^{m\times(n-m)}$. If the matrix
    \[
        M=
        \begin{pmatrix}
            A & C \\
            C^T & 0
        \end{pmatrix}
    \]
    has at most one positive eigenvalue, then $\mathrm{rank}(C)\leq 1$.
\end{lemma}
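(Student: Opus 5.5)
The natural route is to cite the interlacing/inertia theory directly, but the statement also has a short self-contained proof, and I would present that one. The plan is to argue by contradiction: assume $\mathrm{rank}(C)\geq 2$ and produce a two-dimensional subspace on which the quadratic form $q(v)=v^TMv$ is positive definite. By the Courant--Fischer min-max characterization, $M$ would then have at least two positive eigenvalues, a contradiction.

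\textbf{Step 1: choose test vectors.} If $\mathrm{rank}(C)\geq 2$, then $C$ has two linearly independent columns, so there are vectors $y_1,y_2\in\R^{n-m}$ with $Cy_1$ and $Cy_2$ linearly independent in $\R^m$. After replacing the pair by a suitable invertible linear combination (Gram--Schmidt in $\R^m$), I may assume $Cy_1=u_1$ and $Cy_2=u_2$ are orthonormal.

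\textbf{Step 2: build the subspace.} For a parameter $t>0$, set $v_k=(t\,u_k,\,y_k)$ for $k=1,2$. For $v=a v_1+b v_2$, write $u=au_1+bu_2$ and $y=ay_1+by_2$, so that $v=(tu,y)$ and $Cy=u$. A direct computation, using that the lower-right block is zero, gives
\[
q(v)=t^2\,u^TAu+2t\,u^TCy=t^2\,u^TAu+2t\,\|u\|^2=t\|u\|^2\bigl(t\,\hat u^TA\hat u+2\bigr),
\]
where $\hat u=u/\|u\|$. Since $\hat u$ ranges over the unit sphere, $|\hat u^TA\hat u|\leq\|A\|$. Hence if $0<t<2/\|A\|$ (any $t>0$ works if $A=0$), then $q(v)>0$ for every $(a,b)\neq 0$; note $u\neq 0$ by independence of $u_1,u_2$. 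The vectors $v_1,v_2$ are linearly independent because their first blocks are, so they span a two-dimensional subspace on which $q$ is positive definite.

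\textbf{Step 3: conclude.} By min-max, the second largest eigenvalue of $M$ is positive, contradicting the hypothesis. Therefore $\mathrm{rank}(C)\leq 1$.

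The only delicate point is the choice of scaling in Step 2. Without the small parameter $t$, the block $A$ could dominate and destroy positivity. Taking the first block to be $t\,Cy$ makes the cross term $2t\|u\|^2$ the leading contribution, since the $A$-term is of order $t^2$. Alternatively, the lemma follows from Theorem~1.11 of \cite{zhangSchurComplementIts2005} (inertia additivity), but the direct argument above avoids that dependence.
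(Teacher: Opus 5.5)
Your proof is correct. It takes a different route from the paper, which gives no argument of its own: it records the lemma as an immediate corollary of Theorem~1.11 of \cite{zhangSchurComplementIts2005} and uses that inertia result as a black box.

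You replace the citation with a direct variational argument. The identity $q(tu,y)=t^2u^TAu+2t\|u\|^2$ holds because $Cy=u$ and the lower-right block vanishes. With $0<t<2/\|A\|$ it yields a two-dimensional subspace on which $q$ is positive definite, and Courant--Fischer then forces $\lambda_2(M)>0$.

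The paper's approach buys brevity and places the lemma in the standard inertia theory of bordered symmetric matrices. Yours is self-contained, uses nothing beyond min-max, and proves more. The same construction with $k=\mathrm{rank}(C)$ vectors $y_1,\dots,y_k$ gives at least $k$ positive eigenvalues. Taking $t<0$ with $|t|<2/\|A\|$ makes $q$ negative definite on the same span, so there are also at least $k$ negative eigenvalues. This is the general inertia bound for such bordered matrices.

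One simplification: the orthonormalization in Step~1 is unnecessary. The identity $u^TCy=\|u\|^2$ holds for any $y$ with $Cy=u$, and you only use linear independence of $Cy_1,Cy_2$, to guarantee $u\neq 0$.
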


We will use this lemma to show the retractions coming from different choices of section are projectively equivalent. Recall from Lemma~\ref{lemma:collection_of_sections_is_M-convex} that the set of section-defining vectors $\tau$ is M-convex. Let us consider the case of two sections which differ by an elementary exchange.

\begin{theorem}\label{theorem:projective_eq_of_lorentzian_sections}
    Let $f\in\Lnr$ with $\supp(f)=J$. Let $\tau$ and $\tau'=\tau+e_k-e_l$ be two section-defining vectors for the projection
    \[
        \kproj\colon \Jmax\to\RetK(J).
    \]
    Then there exists a scalar $\lambda>0$ such that
    \[
        \RetK^\tau(f)=\lambda\RetK^{\tau'}(f).
    \]
\end{theorem}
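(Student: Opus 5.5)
The plan is to show that the ratio $c_{x+\tau}/c_{x+\tau'}$ does not depend on $x\in\RetK(J)$. Then $\lambda$ is this common ratio, and $\RetK^\tau(f)=\lambda\RetK^{\tau'}(f)$ follows directly from Definition~\ref{def:Ret_polynomial}. Both coefficients are positive because $x+\tau$ and $x+\tau'$ lie in $\Jmax\subseteq J$ by the Structure Theorem~\ref{theorem:structure_theorem_for_mconvex}.

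The basis-exchange graph of the polymatroid $\RetK(J)$ is connected (Theorem~\ref{theorem:retraction_is_polymatroid}). So it suffices to treat an elementary exchange $y=x+e_i-e_j$ with $x,y\in\RetK(J)$, where $x_i<y_i\le\kappa_i$ and $x_j>y_j\ge 0$. For such a pair I want the cross-ratio identity
\[
c_{x+\tau}\,c_{y+\tau'}=c_{x+\tau'}\,c_{y+\tau}.
\]
First I record the saturation facts. Since $\tau_l\ge1$ and $\tau'_k\ge1$, Lemma~\ref{lemma:section_vectors_saturate_coordinates} gives $x_l=y_l=\kappa_l$ and $x_k=y_k=\kappa_k$. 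In particular $k,l\notin\{i,j\}$, because $x_i<\kappa_i$ and $y_j<x_j$ rules out $j\in\{k,l\}$.

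Next set $\beta=x+\tau-e_l-e_j$. This lies in $\Delta_n^{r-2}$ and is nonnegative because $\tau_l\ge1$ and $x_j\ge1$. In the exponential normalization, the coefficient of $w_aw_b$ (with $a\ne b$) in $\partial^\beta f$ is $c_{\beta+e_a+e_b}$. Hence the Hessian of $\partial^\beta f$ has the following entries:
\begin{itemize}
\item $(l,j)$ entry $c_{x+\tau}$ and $(k,j)$ entry $c_{x+\tau'}$;
\item $(l,i)$ entry $c_{y+\tau}$ and $(k,i)$ entry $c_{y+\tau'}$, using $\beta+e_l+e_i=y+\tau$ and $\beta+e_k+e_i=y+\tau'$.
\end{itemize}
Since $f$ is Lorentzian, this Hessian has at most one positive eigenvalue. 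By Cauchy interlacing, so does its principal $4\times4$ submatrix on the indices $\{k,l,i,j\}$. I then apply Lemma~\ref{lemma:1_pos_eigval_gives_rk1} with the block ordering $(\{k,l\},\{i,j\})$. The off-diagonal block $\begin{pmatrix} c_{y+\tau'} & c_{x+\tau'}\\ c_{y+\tau} & c_{x+\tau}\end{pmatrix}$ then has rank at most $1$, which is exactly the cross-ratio identity. This requires the $\{i,j\}$ diagonal block to vanish, that is, the exponents $\beta+2e_i$, $\beta+2e_j$ and $\beta+e_i+e_j$ must lie outside $J$.

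Verifying this vanishing is the main obstacle. For each of the three exponents, I would compute its $\kappa$-projection, using that $x_i<\kappa_i$, that $x_k=\kappa_k$ and $x_l=\kappa_l$, and the sizes of $\tau_l$ and $\tau_j$. If some exponent were in $J$, I would show its projection strictly dominates $x$ or $y$, or has larger size than the rank $r^*$ of $\RetK(J)$. That would contradict maximality via Corollary~\ref{corollary:Jmax_size_description} and Lemma~\ref{lemma:projection_always_independent}.

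For example, $\beta+e_i+e_j=x+\tau+e_i-e_l$ projects to $x+e_i$ when $\tau_l\ge2$, and $x+e_i$ strictly dominates $x$. The delicate case is $\tau_l=1$, where the projection is $x+e_i-e_l$ and has the correct size. Here I would try first to use that $\tau'=\tau+e_k-e_l$ has $\tau'_l=0$, together with Lemma~\ref{lemma:fiber_exchange_like_retraction}, to show that no element of $\Jmax$ can have $l$-coordinate below $\kappa_l$ while $\tau$ is a section. If that fails, I would switch to the alternative base point $\beta'=x+\tau'-e_k-e_j$ and run the same argument symmetrically. As a last resort, I would reorder the blocks as $(\{i,j\},\{k,l\})$ and instead show that the $\{k,l\}$ block vanishes.
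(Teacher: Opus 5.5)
Your setup is the same as the paper's: the same $\beta=x+\tau-e_l-e_j$, Hessian entries $c_{\beta+e_u+e_v}$, and Lemma~\ref{lemma:1_pos_eigval_gives_rk1} applied to the $\{k,l\}\times\{i,j\}$ block. The gap is that the step you call the main obstacle is never carried out, and the method you propose for it does not work.

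First, the ``delicate case'' for $\beta+e_i+e_j=x+\tau+e_i-e_l$ comes from a miscomputation. Since $x_l=\kappa_l$ and $\tau_l\geq 1$, the $l$-coordinate $\kappa_l+\tau_l-1$ is still at least $\kappa_l$. So the projection is always $x+e_i$, of size $r^*+1$, and this exponent is never in $J$; no case split on $\tau_l$ is needed.

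Second, and more seriously, the diagonal entries cannot be excluded by ``strict domination or size larger than $r^*$''. When $x_i+1=\kappa_i$, the exponent $\beta+2e_i$ projects exactly to $y$. When $x_j=\kappa_j$, the exponent $\beta+2e_j=x+\tau+e_j-e_l$ projects exactly to $x$. Both situations always occur when $\kappa=\ones$, so your plan already fails in the matroid case. These entries do vanish, but the reason is Lemma~\ref{lemma:fiber_exchange_like_retraction}. Every element of the fiber over $y$ has $i$-coordinate $y_i$, and every element of the fiber over $x$ has $j$-coordinate $x_j$. The two exponents have $i$-coordinate $y_i+1$ and $j$-coordinate $x_j+1$, respectively, so neither lies in $J$.

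The paper avoids the diagonal entries altogether. It only shows that the off-diagonal entries among indices with $\beta_s<\kappa_s$ vanish, using the size count above. That block is then a nonnegative diagonal matrix, hence positive semidefinite. By Weyl monotonicity, it can be replaced by zero without creating a second positive eigenvalue, and then Lemma~\ref{lemma:1_pos_eigval_gives_rk1} applies. The same trick works verbatim on your $4\times 4$ principal submatrix. So with either repair (the fiber lemma for the diagonal, or the Weyl subtraction) your interlacing-based version goes through.
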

\begin{proof}
    By definition of the $\tau$-section retraction,
    \[
        \RetK^\tau(f)
        =
        \sum_{x\in\RetK(J)}
        c_{x+\tau}\frac{w^x}{x!}
        \qquad\text{and}\qquad
        \RetK^{\tau'}(f)
        =
        \sum_{x\in\RetK(J)}
        c_{x+\tau'}\frac{w^x}{x!}.
    \]
    If $\RetK(J)$ consists of a single element, the result is immediate. Thus assume $|\RetK(J)|\geq 2$. By the connectivity of the basis-exchange graph for polymatroids, it suffices to prove that for any elementary exchange $y=x+e_i-e_j$ in $\RetK(J)$, we have
    \[
        \frac{c_{x+\tau'}}{c_{x+\tau}}
        =
        \frac{c_{y+\tau'}}{c_{y+\tau}}.
    \]

    By Lemma~\ref{lemma:section_vectors_saturate_coordinates}, the indices $k$ and $l$ cannot be equal to either $i$ or $j$. We use this in the computations below.

    By Theorem~\ref{theorem:structure_theorem_for_mconvex}, there is an element $\alpha\in J$ lying in the fiber $\kproj^{-1}(x)$ such that $\alpha=x+\tau$. Similarly, the element in this fiber corresponding to $\tau'$ is $\alpha'=x+\tau'$. Since $\tau'=\tau+e_k-e_l$, we have $\alpha'=\alpha+e_k-e_l$. Choose $\beta=\alpha-e_l-e_j$. Since the indices $i,j,k,l$ are distinct, and since $\alpha_l>\kappa_l$ and $\alpha_j=x_j>0$, we have $\beta\in\Zgeqn$. Also $\beta\in\Delta_n^{r-2}$. Rewriting the desired equality in terms of $\beta$, it suffices to show
    \[
        \frac{c_{\beta+e_k+e_j}}{c_{\beta+e_l+e_j}}
        =
        \frac{c_{\beta+e_k+e_i}}{c_{\beta+e_l+e_i}}.
    \]

    Since $\alpha'=\alpha+e_k-e_l$ has the same $\kappa$-projection as $\alpha$, we must have $\alpha_k\geq\kappa_k$ and $\alpha_l>\kappa_l$. Hence $\beta_k\geq\kappa_k$ and $\beta_l\geq\kappa_l$. On the other hand, since $y=x+e_i-e_j\in\RetK(J)$, Lemma~\ref{lemma:fiber_exchange_like_retraction} implies that $\alpha_i=x_i$ and $\alpha_j=x_j$. Since $y_i=x_i+1\leq\kappa_i$, we have $\beta_i=\alpha_i=x_i<\kappa_i$; and since $x_j\leq\kappa_j$, we have $\beta_j=\alpha_j-1=x_j-1<\kappa_j$.

    Reordering the variables, we may assume that the indices $\{1,\dots,m\}$ satisfy $\beta_s\geq\kappa_s$, while the indices $\{m+1,\dots,n\}$ satisfy $\beta_s<\kappa_s$. Under this ordering, $k,l\in\{1,\dots,m\}$ and $i,j\in\{m+1,\dots,n\}$.

    We compute
    \[
        \partial^\beta f
        =
        \sum_{\substack{u\leq v\\ \beta+e_u+e_v\in J}}
        c_{\beta+e_u+e_v}
        \frac{w_uw_v}{(e_u+e_v)!}.
    \]
    Since $\partial^\beta f$ is homogeneous of degree two and written in exponential form, its Hessian matrix $H$ has entries $H_{uv}=c_{\beta+e_u+e_v}$. We write $H$ in block form
    \[
        H=
        \begin{pmatrix}
            A & C \\
            C^T & B
        \end{pmatrix},
    \]
    where $A$ is an $m\times m$ matrix and $B$ is an $(n-m)\times(n-m)$ matrix.

    We claim that the off-diagonal entries of $B$ are zero. Let $u$ and $v$ be distinct indices in the block $B$. Since $\beta_u<\kappa_u$ and $\beta_v<\kappa_v$, adding $e_u+e_v$ increases the size of the $\kappa$-projection by two. Meanwhile, passing from $\alpha$ to $\beta=\alpha-e_l-e_j$ decreases the size of the $\kappa$-projection by only one as $\alpha_l > \kappa_l$. Hence $|\kproj(\beta+e_u+e_v)|=|\kproj(\alpha)|+1$, so by Corollary~\ref{corollary:Jmax_size_description}, the vector $\beta+e_u+e_v$ cannot lie in $J$. Therefore the off-diagonal entries of $B$ are zero. Since the coefficients of $f$ are nonnegative, $B$ is a nonnegative diagonal matrix, and hence is positive semidefinite.

    Since $f$ is Lorentzian, $H$ has at most one positive eigenvalue. By Weyl's monotonicity theorem, replacing the positive semidefinite block $B$ with the zero matrix preserves this property. Thus the matrix
    \[
        \begin{pmatrix}
            A & C \\
            C^T & 0
        \end{pmatrix}
    \]
    has at most one positive eigenvalue.

    The matrix $C$ is not zero. For instance, $C_{l,j}=c_{\beta+e_l+e_j}=c_\alpha\neq 0$. Thus, by Lemma~\ref{lemma:1_pos_eigval_gives_rk1}, the matrix $C$ has rank exactly one.

    The entries appearing below are nonzero because they are the coefficients of $x+\tau$, $x+\tau'$, $y+\tau$, and $y+\tau'$. Since $C$ has rank one, its columns are proportional. In particular, applying this proportionality to the columns indexed by $i$ and $j$, and to the rows indexed by $k$ and $l$, gives
    \[
        \frac{C_{k,i}}{C_{l,i}}
        =
        \frac{C_{k,j}}{C_{l,j}}.
    \]
    Substituting $C_{u,v}=c_{\beta+e_u+e_v}$ gives
    \[
        \frac{c_{\beta+e_k+e_i}}{c_{\beta+e_l+e_i}}
        =
        \frac{c_{\beta+e_k+e_j}}{c_{\beta+e_l+e_j}},
    \]
    which is the desired equality.
\end{proof}

\begin{corollary}\label{cor:all_lorentzian_sections_projectively_equiv}
    Let $f\in\Lnr$ with $\supp(f)=J$. If $\tau$ and $\tau'$ are any two section-defining vectors for
    \[
        \kproj\colon \Jmax\to\RetK(J),
    \]
    then $\RetK^\tau(f)$ and $\RetK^{\tau'}(f)$ are projectively equivalent.
\end{corollary}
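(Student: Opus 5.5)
The plan is to reduce to Theorem~\ref{theorem:projective_eq_of_lorentzian_sections} by chaining elementary exchanges inside the set of section-defining vectors. By Lemma~\ref{lemma:collection_of_sections_is_M-convex}, the set
\[
    S=\{\alpha-\kproj(\alpha)\mid \alpha\in\Jmax\}
\]
of section-defining vectors is an M-convex set, so its basis-exchange graph is connected. Given $\tau,\tau'\in S$, I will produce a sequence $\tau=\tau^{(0)},\tau^{(1)},\dots,\tau^{(p)}=\tau'$ in $S$, where consecutive terms differ by an elementary exchange $\tau^{(q+1)}=\tau^{(q)}+e_k-e_l$.

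To build the sequence concretely, I will argue by induction on $\sum_s|\tau_s-\tau'_s|$. If $\tau\neq\tau'$, pick $k$ with $\tau_k<\tau'_k$. The exchange axiom \eqref{def:weak_exchange_axiom} for $S$ then gives some $l$ with $\tau_l>\tau'_l$ and $\tau+e_k-e_l\in S$. This new vector is strictly closer to $\tau'$ in $\ell^1$ distance (it drops by $2$), so induction applies.

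Each consecutive pair $\tau^{(q)},\tau^{(q+1)}$ is exactly the hypothesis of Theorem~\ref{theorem:projective_eq_of_lorentzian_sections}. That theorem gives scalars $\lambda_q>0$ with $\RetK^{\tau^{(q)}}(f)=\lambda_q\RetK^{\tau^{(q+1)}}(f)$. Multiplying these together yields
\[
    \RetK^\tau(f)=\Bigl(\prod_q\lambda_q\Bigr)\RetK^{\tau'}(f).
\]
The product is a positive real, hence a unit of $\R_{\geq0}$, so the two retractions are projectively equivalent in the sense of Definition~\ref{def:projective_equivalence}. Transitivity of projective equivalence is just the fact that the nonzero scalars form a group, so no further work is needed.

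There is no substantial obstacle. The only point to check is that every intermediate vector really is section-defining, so that Theorem~\ref{theorem:projective_eq_of_lorentzian_sections} applies at each step. This is guaranteed because the exchange is carried out inside $S$ itself, using its M-convexity rather than that of $J$.
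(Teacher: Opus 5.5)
Your proposal is correct and is essentially the paper's proof. M-convexity of the set of section-defining vectors (Lemma~\ref{lemma:collection_of_sections_is_M-convex}) gives a path of elementary exchanges from $\tau$ to $\tau'$; Theorem~\ref{theorem:projective_eq_of_lorentzian_sections} is applied along it and the positive scalars are multiplied. Your induction on $\sum_s|\tau_s-\tau'_s|$ via the exchange axiom just makes explicit the basis-exchange-graph connectivity that the paper cites, with each step of exactly the form $\tau+e_k-e_l$ that the theorem requires.
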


\begin{proof}
    By Lemma~\ref{lemma:collection_of_sections_is_M-convex}, the set of section-defining vectors is M-convex. Hence its basis-exchange graph is connected. The result follows by applying Theorem~\ref{theorem:projective_eq_of_lorentzian_sections} along a path between $\tau$ and $\tau'$.
\end{proof}

Since the different section retractions are projectively equivalent, the retraction of a Lorentzian polynomial is well-defined up to projective equivalence. Because $\R_{\geq 0}$ has addition, one can also choose a canonical representative by summing over all sections.

\begin{definition}\label{def:lorentzian_retraction_sum}
    Let $f\in\Lnr$ with support $J$. We define
    \[
        \RetK(f)
        \coloneqq
        \sum_\tau \RetK^\tau(f),
    \]
    where $\tau$ ranges over all section vectors for $\kproj\colon\Jmax\to\RetK(J)$.
\end{definition}

Equivalently, using the description of section vectors, we have
\[
    \RetK(f)
    =
    \sum_{\alpha\in\Jmax}
    c_\alpha
    \frac{w^{\kproj(\alpha)}}{\kproj(\alpha)!}.
\]
This expression is the polynomial analogue of the definition of $\RetK(J)$ as the set of maximal elements of $\kproj(J)$.

\begin{corollary}\label{cor:lorentzian_retraction_sum}
    Let $f\in\Lnr$ with $\supp(f)=J$. Then $\RetK(f)$ is Lorentzian.
\end{corollary}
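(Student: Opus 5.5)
The plan is to write $\RetK(f)$ as a positive multiple of a single section retraction, and then invoke Corollary~\ref{cor:section_retraction_lorentzian}. Fix one section-defining vector $\tau_0$. By Corollary~\ref{cor:all_lorentzian_sections_projectively_equiv}, for every other section-defining vector $\tau$ there is a scalar $\lambda_\tau\in\R^\times$ with $\RetK^\tau(f)=\lambda_\tau\RetK^{\tau_0}(f)$. Summing over all $\tau$ in the finite set $S$ of Lemma~\ref{lemma:collection_of_sections_is_M-convex} gives
\[
    \RetK(f)=\Bigl(\sum_{\tau\in S}\lambda_\tau\Bigr)\RetK^{\tau_0}(f).
\]

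Next I will check that each $\lambda_\tau$ is positive. Theorem~\ref{theorem:projective_eq_of_lorentzian_sections} already gives positive scalars for elementary exchanges, and these compose along exchange paths in $S$. Alternatively, positivity is immediate from the coefficients: every coefficient $c_{x+\tau}$ and $c_{x+\tau_0}$ with $x\in\RetK(J)$ is strictly positive, since $x+\tau\in\Jmax\subseteq J=\supp(f)$. Hence $\Lambda=\sum_\tau\lambda_\tau>0$, so $\RetK(f)$ is a positive scalar multiple of $\RetK^{\tau_0}(f)$.

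Finally, $\RetK^{\tau_0}(f)$ is Lorentzian by Corollary~\ref{cor:section_retraction_lorentzian}. Multiplying a polynomial by a positive scalar preserves both conditions in Definition~\ref{def:lorentzian_poly}. The support is unchanged, so it is still $\RetK(J)$, which is M-convex by Theorem~\ref{theorem:retraction_is_polymatroid}. Each Hessian $\hess(\partial^\beta\,\cdot)$ is scaled by $\Lambda>0$, which does not change the number of positive eigenvalues. Therefore $\RetK(f)$ is Lorentzian.

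The only step needing care is the passage from "projectively equivalent" to "equivalent via a positive scalar". Definition~\ref{def:projective_equivalence} in the monoid $\R_{\geq0}$ already forces $\lambda\in\R_{>0}$, and the coefficient argument confirms this directly. I also need that the sum over sections is finite and nonempty. This holds because $\Jmax$ is a finite nonempty set, so $S$ is finite and nonempty.
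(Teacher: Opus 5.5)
Your proposal is correct and follows the paper's proof: you write $\RetK(f)=\bigl(\sum_\tau\lambda_\tau\bigr)\RetK^{\tau_0}(f)$ with positive scalars supplied by Corollary~\ref{cor:all_lorentzian_sections_projectively_equiv}, then apply Corollary~\ref{cor:section_retraction_lorentzian}. Your extra checks, namely the positivity of each $\lambda_\tau$ via the coefficients and the invariance of the Lorentzian property under positive scaling, make explicit what the paper leaves implicit.
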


\begin{proof}
    By Corollary~\ref{cor:all_lorentzian_sections_projectively_equiv}, the section retractions $\RetK^\tau(f)$ are pairwise projectively equivalent. Therefore there exists a fixed section vector $\tau_0$ and positive scalars $\lambda_\tau$ such that
    \[
        \RetK^\tau(f)=\lambda_\tau\RetK^{\tau_0}(f)
    \]
    for every section vector $\tau$. Hence
    \[
        \RetK(f)
        =
        \left(\sum_\tau\lambda_\tau\right)
        \RetK^{\tau_0}(f).
    \]
    Since $\RetK^{\tau_0}(f)$ is Lorentzian by Corollary~\ref{cor:section_retraction_lorentzian}, it follows that $\RetK(f)$ is Lorentzian.
\end{proof}

\begin{remark}
    When $\kappa=\ones$, the retraction $\RetK(J)$ is the matroid retraction. In this case, if the multi-affine part of $J$ is nonempty, then $\RetK(J)$ is precisely the support of the multi-affine part. Thus the preceding results recover the fact that the multi-affine part of a Lorentzian polynomial is Lorentzian.
\end{remark}

Because the product of two Lorentzian polynomials is Lorentzian, see \cite[Corollary~2.32]{branden2024lorentzianpolynomials}, we obtain a Lorentzian version of caged polymatroid union.

\begin{corollary}[Lorentzian Caged Union]\label{cor:lorentzian_caged_union}
    Let $f\in L_n^{r_1}$ and $g\in L_n^{r_2}$ be Lorentzian polynomials with supports $J_1$ and $J_2$. Then every section retraction of $fg$ is Lorentzian and is supported on
    \[
        \RetK(J_1+J_2).
    \]
    In particular, if $J_1$ and $J_2$ are $\kappa$-caged, then every section retraction of $fg$ is supported on the $\kappa$-polymatroid union
    \[
        J_1\vee J_2=\RetK(J_1+J_2).
    \]
\end{corollary}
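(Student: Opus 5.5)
The plan is to reduce everything to three earlier results: closure of Lorentzian polynomials under products, Proposition~\ref{prop:sum_polymatroid_is_polymatroid} on Minkowski sums of supports, and Corollary~\ref{cor:section_retraction_lorentzian} on section retractions.

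First, by \cite[Corollary~2.32]{branden2024lorentzianpolynomials}, the product $fg$ is Lorentzian of degree $r_1+r_2$. To apply the section-retraction machinery, I need to pass between ordinary and exponential form. A polynomial with nonnegative coefficients can always be written as $\sum_\alpha c_\alpha w^\alpha/\alpha!$ with $c_\alpha\ge 0$, and $c_\alpha>0$ exactly on the support. So $fg$ is an element of $\mathcal{E}^{\mathrm{mc}}_{\R_{\geq 0}}(w)$ as soon as its support is M-convex.

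Second, I identify the support. Write $f=\sum a_\alpha w^\alpha$ and $g=\sum b_\beta w^\beta$ with $a_\alpha,b_\beta>0$ on $J_1$ and $J_2$. The coefficient of $w^\gamma$ in $fg$ is
\[
\sum_{\alpha+\beta=\gamma} a_\alpha b_\beta .
\]
This is a sum of nonnegative terms, and it is positive exactly when $\gamma=\alpha+\beta$ for some $\alpha\in J_1$ and $\beta\in J_2$. Since there is no cancellation, $\supp(fg)=J_1+J_2$. This set is M-convex by Proposition~\ref{prop:sum_polymatroid_is_polymatroid}.

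Third, let $\tau$ be any section-defining vector for $\kproj\colon (J_1+J_2)^{\max_\kappa}\to\RetK(J_1+J_2)$. Corollary~\ref{cor:section_retraction_lorentzian}, applied to $fg$ with support $J=J_1+J_2$, gives that $\RetK^\tau(fg)$ is Lorentzian. By construction, $\supp(\RetK^\tau(f))=\RetK(J)$, so $\RetK^\tau(fg)$ is supported on $\RetK(J_1+J_2)$.

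Two small bookkeeping points remain:
\begin{itemize}
\item The standing assumption $\kappa\le\delta^+_J$ is harmless, by Remark~\ref{remark:kappa_smaller_than_delplus} and the discussion before Definition~\ref{def:Ret_polynomial}.
\item The ``in particular'' clause is immediate, since $J_1\vee J_2$ is defined to be $\RetK(J_1+J_2)$.
\end{itemize}
Corollary~\ref{cor:lorentzian_retraction_sum} would additionally give that the summed retraction $\RetK(fg)$ is Lorentzian, though this is not needed for the statement.

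The only step needing any care is the support identification, specifically the absence of cancellation and the passage to exponential form. Both rely only on nonnegativity of the coefficients, so I expect no real obstacle.
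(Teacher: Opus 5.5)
Your proposal is correct and follows essentially the same route as the paper: Lorentzian closure of products via \cite[Corollary~2.32]{branden2024lorentzianpolynomials}, identification of $\supp(fg)$ with $J_1+J_2$, and then Corollary~\ref{cor:section_retraction_lorentzian}. The paper states the support identification without comment; your nonnegativity/no-cancellation argument simply fills in that detail. Note also that your $\supp(\RetK^\tau(f))$ should read $\supp(\RetK^\tau(fg))$.
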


\begin{proof}
    By \cite[Corollary~2.32]{branden2024lorentzianpolynomials}, the product $fg$ is Lorentzian. Its support is the Minkowski sum $J_1+J_2$. Therefore Corollary~\ref{cor:section_retraction_lorentzian} implies that every section retraction of $fg$ is Lorentzian and supported on $\RetK(J_1+J_2)$.
\end{proof}

\section{Representations and Caged Retractions}
\label{section:representations}

The goal of this section is to show that caged retractions behave well for representations of polymatroids over tracts. We recall strong Grassmann-Plücker functions over near-idempotent tracts, following \cite{baker2025representationtheorypolymatroids}, and encode them as generating functions. We then use embedded minors to obtain representations of $\RetK(J)$ and prove that different choices of section give projectively equivalent representations.

\subsection{Representations as Generating Functions}

We restrict to the class of near-idempotent tracts.

\begin{definition}
    A tract $(F,N_F)$ is \emph{near-idempotent} if $1=-1$ and there exists some $x\in F^\times$ such that
    \[
        1+1+x\in N_F.
    \]
\end{definition}

Both the Krasner hyperfield and the tropical hyperfield are near-idempotent. It turns out that a polymatroid which is not a translate of a matroid has no representations over a tract which is not near-idempotent; see \cite{baker2025representationtheorypolymatroids} for details. For this reason, we restrict to near-idempotent tracts.

The formulation in \cite{baker2025representationtheorypolymatroids} primarily defines representations after reducing the polymatroid, so that $\delminus=0$. Equivalently, they also give a formulation in which one does not reduce first. We use this latter formulation, since it matches the generating-function operations defined above.

\begin{definition}\label{definition:strong_GP_representation}
    Let $(F,N_F)$ be a near-idempotent tract, and let $J\subseteq\dnr$ be an M-convex set. A \emph{strong Grassmann-Pl\"ucker representation} of $J$ over $F$ is a map
    \[
        \rho\colon\dnr\to F
    \]
    satisfying the following conditions:
    \begin{enumerate}
        \item The support of $\rho$ is $J$. That is, for any $\alpha\in\dnr$,
        \[
            \rho(\alpha)\in F^\times
            \quad\Longleftrightarrow\quad
            \alpha\in J.
        \]

                \item The coefficients satisfy the Grassmann-Pl\"ucker relations. Namely, for every $2\leq s\leq r$, every $\alpha\in\Delta_n^{r-s}$, and every choice of indices
        \[
            i_0,\dots,i_s,j_2,\dots,j_s\in[n]
        \]
        such that
        \[
            \delminus\leq\alpha
            \qquad\text{and}\qquad
            \alpha+e_{i_0}+\cdots+e_{i_s}+e_{j_2}+\cdots+e_{j_s}\leq\delplus,
        \]
        we have
        \[
            \sum_{k=0}^s
            \rho(\alpha+e_{i_0}+\cdots+\widehat{e_{i_k}}+\cdots+e_{i_s})
            \rho(\alpha+e_{i_k}+e_{j_2}+\cdots+e_{j_s})
            \in N_F.
        \]
        Here the hat means that the corresponding term is omitted.
    \end{enumerate}
\end{definition}

The most basic example is the representation of an M-convex set over the Krasner hyperfield $\mathbb{K}$, where $\rho$ is the indicator function of $J$. In this case, the Grassmann-Pl\"ucker relations are equivalent to the base exchange axiom. Representations over the tropical hyperfield $\mathbb{T}$ are closely related to M-convex functions in the sense of Murota \cite{murotaDiscreteConvexAnalysis2003}. For matroids, these recover valuated matroids.

Given a representation $\rho$ of $J$ over a tract $F$, we encode it by the formal exponential generating function
\[
    f_\rho(w)
    \coloneqq
    \sum_{\alpha\in J}
    \rho(\alpha)\frac{w^\alpha}{\alpha!}.
\]
Thus $f_\rho\in\mathcal{E}^{\mathrm{mc}}_F(w)$ and $\supp(f_\rho)=J$. Conversely, an element $f\in\mathcal{E}^{\mathrm{mc}}_F(w)$ whose coefficients satisfy the Grassmann-Pl\"ucker relations defines a representation of its support over $F$. We will therefore identify representations with their generating functions.

This viewpoint allows us to apply the embedded minor operations from Definition~\ref{def:embedded_minor_gen_poly} directly to representations. 

\begin{theorem}\label{theorem:embedded_minors_preserve_representations}
    Let $f\in\mathcal{E}^{\mathrm{mc}}_F(w)$ be a representation of an M-convex set $J$ over a near-idempotent tract $F$. Let $\mu,\nu\in\Zgeqn$ be effectively independent and effectively coindependent in $J$, respectively, and let $\tau\in\mathbb{Z}^n$ satisfy $\tau\geq-\delminus$. Then:
    \begin{enumerate}
        \item $f\setminus\nu$ is a representation of $J\setminus\nu$ over $F$.
        \item $f/\mu$ is a representation of $J/\mu$ over $F$.
        \item $T_\tau f$ is a representation of $J+\tau$ over $F$.
    \end{enumerate}
    Moreover, these operations are natural and induce the corresponding maps on foundations.
\end{theorem}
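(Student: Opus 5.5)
The plan is to reduce all three statements to the corresponding results in \cite{baker2025representationtheorypolymatroids}, which prove that deletion, contraction and translation of strong Grassmann-Pl\"ucker representations over near-idempotent tracts give representations of the minors. The key step is to show that our generating-function operations in Definition~\ref{def:embedded_minor_gen_poly} agree with their coefficient-level definitions. Since $\supp(T_\tau f)=J+\tau$, $\supp(f\setminus\nu)=J\setminus\nu$ and $\supp(f/\mu)=J/\mu$, support condition (1) holds automatically. So the work is in the Grassmann-Pl\"ucker relations.

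For translation by $\tau$, the new coefficient function is $\rho'(\gamma)=\rho(\gamma-\tau)$, and the new cages are $\delta^\pm_{J+\tau}=\delta^\pm_J+\tau$. A relation for $\rho'$ indexed by $(\alpha,i_0,\dots,i_s,j_2,\dots,j_s)$ with $\delta^-_{J+\tau}\le\alpha$ is therefore exactly the relation for $\rho$ indexed by $(\alpha-\tau,\dots)$. The bounds $\delminus\le\alpha-\tau$ and $\alpha-\tau+\sum e\le\delplus$ transform correctly. Here the degree is $r+|\tau|$ and $s$ ranges up to the new rank, but terms with $s$ exceeding the old rank are impossible because the window $\delminus\le\cdot\le\delplus$ has width at most $r$ in total size.

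For deletion, $\rho'=\rho|_{J\setminus\nu}$, extended by zero. I will check each relation for $J\setminus\nu$. Its window is $[\delta^-_{J\setminus\nu},\delplus-\nu]$, using Remark~\ref{remark:embedded_minor_cage} for the upper bound. Every term in such a relation has both factors' arguments bounded by $\delplus-\nu$. So $\rho'$ and $\rho$ agree on every term, and the relation is a relation for $\rho$, provided its lower bound $\alpha\ge\delminus$ holds. This holds since $\delta^-_{J\setminus\nu}\ge\delminus$.

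Contraction is the expected obstacle. Here $\rho'(\gamma)=\rho(\gamma+\mu)$ whenever $\gamma+\mu\ge\delminus+\mu$. The issue is that $\delta^-_{J/\mu}$ may differ from $\delminus$, and similarly $\delta^+_{J/\mu}$ may fail to equal $\delplus-\mu$ (see the example following Remark~\ref{remark:embedded_minor_cage}). The first attempt will be to shift a relation for $J/\mu$ by $\mu$. That gives a relation for $\rho$ with base $\alpha+\mu\ge\delminus$ and top $\le\delta^+_{J/\mu}+\mu\le\delplus$. Terms whose argument lies outside $J/\mu+\mu$ vanish on both sides, because any element of $J$ appearing with argument $\ge\delminus+\mu$ is in $J/\mu+\mu$ exactly. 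So the relations transfer.

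Should any subtlety arise, I will instead reduce to single coordinates via $J/(\mu_1+\mu_2)=(J/\mu_1)/\mu_2$ and invoke the matching proposition of \cite{baker2025representationtheorypolymatroids} directly. The naturality on foundations is then inherited from that paper, since our operations coincide with theirs coefficientwise.
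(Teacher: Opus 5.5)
Your proposal is correct, but it takes a different route from the paper: the paper's proof is a one-line citation of Theorem~7.1 of \cite{baker2025representationtheorypolymatroids}, whereas you verify (1)--(3) directly from Definition~\ref{definition:strong_GP_representation}.

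Your direct check works because, in the non-reduced formulation, every Grassmann--Pl\"ucker relation required of a minor becomes, after shifting by $\mu$ or $\tau$, one of the relations already required of $\rho$. Two facts make this happen:
\begin{itemize}
\item The windows only shrink in the harmless direction: $\delta^-_{J\setminus\nu}\ge\delminus$, $\delta^-_{J/\mu}\ge\delminus$, $\delta^+_{J/\mu}\le\delplus-\mu$, and translation shifts both cages exactly.
\item Every argument in a relation lies above the base vector $\alpha$. It therefore lies in the region where the new coefficient function agrees with the (shifted) old one.
\end{itemize}
In particular, the contraction ``obstacle'' you anticipate does not arise. The failure of $\delta^+_{J/\mu}=\delplus-\mu$ is harmless because only the inequality is used, so the fallback in your last paragraph is unnecessary.

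One small imprecision is in the translation step. The reason no relation with $s>r$ occurs is that $\alpha-\tau\ge\delminus\ge 0$ has total size $r-s$. It is not true that the window $[\delminus,\delplus]$ has total width at most $r$; for $J=\Delta_n^r$ it has width $nr$.

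What your route buys is a self-contained, elementary proof of (1)--(3), and it shows why the non-reduced formulation suits embedded minors. What the citation buys is brevity and coverage of the ``moreover'' clause about foundations. That clause genuinely needs the machinery of \cite{baker2025representationtheorypolymatroids}, and you, like the paper, still have to cite it.
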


\begin{proof}
    This is Theorem~7.1 of \cite{baker2025representationtheorypolymatroids}, translated into the generating-function notation of Definition~\ref{def:embedded_minor_gen_poly}.
\end{proof}

Applying Theorem~\ref{theorem:embedded_minors_preserve_representations} to the embedded-minor description of caged retractions gives the following.

\begin{corollary}\label{corollary:section_retraction_representation}
    Let $f\in\mathcal{E}^{\mathrm{mc}}_F(w)$ be a representation of an M-convex set $J$ over a near-idempotent tract $F$. Let $\tau$ be a section-defining vector for
    \[
        \kproj\colon\Jmax\to\RetK(J).
    \]
    Then $\RetK^\tau(f)$ is a representation of $\RetK(J)$ over $F$.
\end{corollary}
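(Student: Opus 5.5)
The plan is to realize $\RetK^\tau(f)$ as an explicit composition of embedded-minor operations applied to $f$, and then apply Theorem~\ref{theorem:embedded_minors_preserve_representations} once per operation.

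First, as in the discussion after Definition~\ref{def:Ret_polynomial}, replace $\kappa$ by $\kappa\wedge\delplus$. By Remark~\ref{remark:kappa_smaller_than_delplus} this changes neither $\kproj(J)$, nor $\Jmax$, nor $\RetK(J)$, nor the set of section-defining vectors. So we may assume $\kappa\le\delplus$.

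Next, since $\tau$ is section-defining, pick $\alpha\in\Jmax$ with $\tau=\alpha-\kproj(\alpha)$, and set $\nu=\delplus-\kappa-\tau$. To see that $\nu\in\Zgeqn$, note that if $\tau_i>0$ then $\kproj(\alpha)_i=\kappa_i$, so $\tau_i=\alpha_i-\kappa_i\le(\delplus)_i-\kappa_i$.

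The vector $\nu$ is effectively coindependent in $J$. Indeed, $\alpha=\kproj(\alpha)+\tau\le\kappa+\tau=\delplus-\nu$, so $\alpha$ itself witnesses $J\setminus\nu\neq\varnothing$. By Corollary~\ref{corollary:description_of_embedded_minor}, we have $J\setminus\nu=\RetK(J)+\tau$. The computation following Definition~\ref{def:Ret_polynomial} then gives the identity
\[
    \RetK^\tau(f)=T_{-\tau}(f\setminus\nu).
\]

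Now apply Theorem~\ref{theorem:embedded_minors_preserve_representations} twice.

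\begin{enumerate}
    \item Part~(1) shows that $f\setminus\nu$ is a representation of $J\setminus\nu=\RetK(J)+\tau$ over $F$.
    \item For the translation we must check $-\tau\ge-\delta^-_{J\setminus\nu}$, i.e.\ $\tau\le\delta^-_{\RetK(J)+\tau}=\delta^-_{\RetK(J)}+\tau$. This holds because $\delta^-_{\RetK(J)}\ge 0$. Part~(3) then shows that $T_{-\tau}(f\setminus\nu)$ represents $\RetK(J)+\tau-\tau=\RetK(J)$.
\end{enumerate}

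Combined with the displayed identity, this gives the claim. The support statement $\supp(\RetK^\tau(f))=\RetK(J)$ also agrees with this.

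The main point needing care is the bookkeeping of the hypotheses of Theorem~\ref{theorem:embedded_minors_preserve_representations}. There are two such hypotheses: effective coindependence of $\nu$, and the lower bound on the translation vector. The Grassmann--Pl\"ucker relations in Definition~\ref{definition:strong_GP_representation} are stated relative to the minimal and maximal cages $\delminus$ and $\delplus$ of the support. One must therefore be sure that the cited theorem is applied with the cages of the intermediate set $J\setminus\nu$, whose maximal cage is $\delplus-\nu$ by Remark~\ref{remark:embedded_minor_cage}, rather than those of $J$. Since the cited theorem is phrased in terms of the actual supports, this is automatic. No further computation with the relations themselves should be needed.
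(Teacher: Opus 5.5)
Your proof is correct and follows the paper's route exactly: you write $\RetK^\tau(f)=T_{-\tau}(f\setminus\nu)$ via Corollary~\ref{corollary:description_of_embedded_minor}, then apply Theorem~\ref{theorem:embedded_minors_preserve_representations} first to the deletion and then to the translation. The paper leaves the hypothesis checks implicit (nonnegativity and effective coindependence of $\nu$, and $\tau\le\delta^-_{J\setminus\nu}$), and your verifications of them are correct.
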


\begin{proof}
    By Corollary~\ref{corollary:description_of_embedded_minor}, the section retraction $\RetK^\tau(f)$ is obtained from $f$ by deletion followed by translation. The result follows from Theorem~\ref{theorem:embedded_minors_preserve_representations}.
\end{proof}

We now show that the representation of $\RetK(J)$ is independent of the choice of section up to projective equivalence as in Definition~\ref{def:projective_equivalence}. As in the Lorentzian case, it suffices to compare section-defining vectors differing by an elementary exchange.

\begin{theorem}\label{theorem:representations_projective_equivalence}
    Let $f\in\mathcal{E}^{\mathrm{mc}}_F(w)$ be a representation of an M-convex set $J$ over a near-idempotent tract $F$. Let $\tau$ and $\tau'=\tau+e_k-e_l$ be two section-defining vectors for
    \[
        \kproj\colon\Jmax\to\RetK(J).
    \]
    Then there exists $\lambda\in F^\times$ such that
    \[
        \RetK^\tau(f)=\lambda\RetK^{\tau'}(f).
    \]
\end{theorem}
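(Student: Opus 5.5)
The plan mirrors the proof of Theorem~\ref{theorem:projective_eq_of_lorentzian_sections}, with the Hessian rank-one argument replaced by a three-term Grassmann--Pl\"ucker relation.

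\textbf{Reduction.} If $\RetK(J)$ is a single point there is nothing to prove, so assume $|\RetK(J)|\geq 2$. By connectivity of the basis-exchange graph of $\RetK(J)$, it suffices to show the following. For every elementary exchange $y=x+e_i-e_j$ in $\RetK(J)$,
\[
    \rho(x+\tau')\,\rho(y+\tau)=\rho(x+\tau)\,\rho(y+\tau').
\]
Here all four values lie in $F^\times$, since the points lie in the sections $\RetK(J)+\tau$ and $\RetK(J)+\tau'$. Given this, the ratio $\lambda=\rho(x+\tau)\rho(x+\tau')^{-1}$ is constant along edges, hence constant on $\RetK(J)$.

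\textbf{Setup.} As before, Lemma~\ref{lemma:section_vectors_saturate_coordinates} shows that $i,j,k,l$ are distinct. Put $\alpha=x+\tau$ and $\beta=\alpha-e_l-e_j\in\Delta_n^{r-2}$. Then
\[
    x+\tau=\beta+e_l+e_j,\quad x+\tau'=\beta+e_k+e_j,\quad y+\tau=\beta+e_l+e_i,\quad y+\tau'=\beta+e_k+e_i.
\]

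\textbf{The relation.} Apply the Grassmann--Pl\"ucker relation with $s=2$ at base point $\beta$, indices $i_0=k$, $i_1=i$, $i_2=j$, and $j_2=l$. The three terms are
\[
    \rho(\beta+e_i+e_j)\rho(\beta+e_k+e_l),\qquad \rho(\beta+e_k+e_j)\rho(\beta+e_i+e_l),\qquad \rho(\beta+e_k+e_i)\rho(\beta+e_j+e_l).
\]
The key claim is that $\beta+e_i+e_j\notin J$. The argument is the one used for the off-diagonal block $B$ in the Lorentzian proof. Since $\beta_i<\kappa_i$ and $\beta_j<\kappa_j$ while $\alpha_l>\kappa_l$, we get $|\kproj(\beta+e_i+e_j)|=|\kproj(\alpha)|+1$. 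Corollary~\ref{corollary:Jmax_size_description} then excludes this point from $J$. So the first term vanishes, and the relation becomes a two-term null sum $a+b\in N_F$ with
\[
    a=\rho(x+\tau')\rho(y+\tau),\qquad b=\rho(y+\tau')\rho(x+\tau).
\]
Uniqueness of additive inverses gives $b=-a$, and $1=-1$ in a near-idempotent tract gives $a=b$, which is the desired identity.

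\textbf{Main obstacle.} The hardest step is checking the hypotheses of the relation as stated in Definition~\ref{definition:strong_GP_representation}: $\delminus\leq\beta$ and $\beta+e_k+e_i+e_j+e_l\leq\delplus$.

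For the lower bound, I would use $\beta_l\geq\kappa_l\geq(\delminus)_l$ together with Lemma~\ref{lemma:fiber_exchange_like_retraction}. That lemma gives $\alpha_j=x_j$, and I must show $x_j-1\geq(\delminus)_j$. This should follow because $y\in\RetK(J)$ lifts to $y+\tau\in J$ with $j$-coordinate $x_j-1$.

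For the upper bound, note that $\beta+e_k+e_i+e_j+e_l=\alpha+e_k+e_i$. Its $k$-coordinate is $\alpha'_k$, which is at most $\delplus_k$. Its $i$-coordinate is $y_i$, which is at most $\delplus_i$ via $y+\tau$. All other coordinates agree with those of $\alpha$, so this bound holds.

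If the convention in \cite{baker2025representationtheorypolymatroids} requires a reduction first, I would translate to $\delminus=0$ using $T_\tau$ and Theorem~\ref{theorem:embedded_minors_preserve_representations}.
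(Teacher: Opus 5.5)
Your argument follows the paper's proof step for step. You make the same reduction to an elementary exchange $y=x+e_i-e_j$ and the same choice $\beta=\alpha-e_l-e_j$. You use the same three-term Grassmann--Pl\"ucker relation and the same size argument to show $\beta+e_i+e_j\notin J$. Finally, you use $1=-1$ in the same way to turn the two-term null sum into the desired identity. Your upper-bound check $\beta+e_k+e_i+e_j+e_l\leq\delplus$ is also correct.

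The one step that fails as written is the lower bound in coordinate $l$. The chain $\beta_l\geq\kappa_l\geq(\delminus)_l$ breaks at the second inequality: the paper only arranges $\kappa\leq\delplus$, never $\kappa\geq\delminus$.

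Here is a concrete instance. Take $\kappa=(1,1,0,0)$ and $J=\{(1,0,1,2),(1,0,0,3),(0,1,1,2),(0,1,0,3)\}$, with $\tau=(0,0,0,3)$, $\tau'=(0,0,1,2)$, $x=(0,1,0,0)$ and $y=(1,0,0,0)$. Then $l=4$ and $\kappa_l=0<2=(\delminus)_l$.

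The conclusion $\beta_l\geq(\delminus)_l$ is still true, but for a different reason: $x+\tau'=\beta+e_k+e_j\in J$ has $l$-coordinate $\beta_l$. More cleanly, since $\{k,j\}$ and $\{i,l\}$ are disjoint, $\beta=(x+\tau')\wedge(y+\tau)$. This gives $\beta\geq\delminus$ in every coordinate at once, and it is how the paper obtains the lower bound.

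With that one-line repair, your proof is complete. The hedge about reducing to $\delminus=0$ is unnecessary, since Definition~\ref{definition:strong_GP_representation} already uses the unreduced formulation.
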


\begin{proof}
    Write the coefficients of $f$ as $\rho$. By definition,
    \[
        \RetK^\tau(f)
        =
        \sum_{x\in\RetK(J)}
        \rho(x+\tau)\frac{w^x}{x!}
        \qquad\text{and}\qquad
        \RetK^{\tau'}(f)
        =
        \sum_{x\in\RetK(J)}
        \rho(x+\tau')\frac{w^x}{x!}.
    \]
    If $\RetK(J)$ consists of a single element, the result is immediate. Thus assume $|\RetK(J)|\geq 2$. By the connectivity of the basis-exchange graph for polymatroids, it suffices to show that for any elementary exchange $y=x+e_i-e_j$ in $\RetK(J)$, we have
    \[
        \frac{\rho(x+\tau')}{\rho(x+\tau)}
        =
        \frac{\rho(y+\tau')}{\rho(y+\tau)}.
    \]

    By Lemma~\ref{lemma:section_vectors_saturate_coordinates}, the indices $k$ and $l$ cannot be equal to either $i$ or $j$. Let $\alpha=x+\tau$ and set $\beta=\alpha-e_l-e_j$. As in the proof of Theorem~\ref{theorem:projective_eq_of_lorentzian_sections}, we have $\beta\in\Zgeqn$ and $\beta\in\Delta_n^{r-2}$. Moreover, the parameters $\beta,i,j,k,l$ satisfy the bounds required for the three-term Grassmann-Pl\"ucker relation. The lower bound follows from $x+\tau'=\alpha+e_k-e_l\in J$ and $y+\tau=\alpha+e_i-e_j\in J$. The upper bound follows coordinatewise from the fact that $x+\tau$, $x+\tau'$, $y+\tau$, and $y+\tau'$ all lie in $J$.

    Consider the three-term Grassmann-Pl\"ucker relation
    \[
        \rho(\beta+e_i+e_j)\rho(\beta+e_k+e_l)
        +
        \rho(\beta+e_i+e_k)\rho(\beta+e_j+e_l)
        +
        \rho(\beta+e_j+e_k)\rho(\beta+e_i+e_l)
        \in N_F.
    \]
    We claim that $\beta+e_i+e_j\notin J$. Indeed, the same size argument used in the Lorentzian proof shows that
    \[
        |\kproj(\beta+e_i+e_j)|
        =
        |\kproj(\alpha)|+1.
    \]
    By Corollary~\ref{corollary:Jmax_size_description}, this vector cannot lie in $J$. Hence the first product in the Grassmann-Pl\"ucker relation is zero.

    Therefore the relation reduces to the two-term relation
    \[
        \rho(\beta+e_i+e_k)\rho(\beta+e_j+e_l)
        +
        \rho(\beta+e_j+e_k)\rho(\beta+e_i+e_l)
        \in N_F.
    \]
    Since $F$ is near-idempotent, we have $1=-1$, and so a two-term null relation gives equality of the two terms. Thus
    \[
        \rho(\beta+e_i+e_k)\rho(\beta+e_j+e_l)
        =
        \rho(\beta+e_j+e_k)\rho(\beta+e_i+e_l).
    \]
    Substituting
    \[
    \begin{aligned}
        \beta+e_j+e_l &= x+\tau, &
        \beta+e_j+e_k &= x+\tau',\\
        \beta+e_i+e_l &= y+\tau, &
        \beta+e_i+e_k &= y+\tau'.
    \end{aligned}
    \]
    gives
    \[
        \rho(y+\tau')\rho(x+\tau)
        =
        \rho(x+\tau')\rho(y+\tau).
    \]
    Since all four coefficients are nonzero, this is equivalent to the desired ratio.
\end{proof}

\begin{corollary}\label{corollary:all_representation_sections_projectively_equiv}
    Let $f\in\mathcal{E}^{\mathrm{mc}}_F(w)$ be a representation of an M-convex set $J$ over a near-idempotent tract $F$. If $\tau$ and $\tau'$ are any two section-defining vectors for
    \[
        \kproj\colon\Jmax\to\RetK(J),
    \]
    then $\RetK^\tau(f)$ and $\RetK^{\tau'}(f)$ are projectively equivalent.
\end{corollary}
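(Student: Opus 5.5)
The plan is to mirror the proof of Corollary~\ref{cor:all_lorentzian_sections_projectively_equiv}, replacing the Lorentzian single-step comparison with Theorem~\ref{theorem:representations_projective_equivalence}. First, by Lemma~\ref{lemma:collection_of_sections_is_M-convex}, the set
\[
    S=\{\alpha-\kproj(\alpha)\mid\alpha\in\Jmax\}
\]
of section-defining vectors is M-convex. Its basis-exchange graph is therefore connected. Concretely, I would argue by induction on $\sum_i|\tau_i-\tau'_i|$, using the exchange axiom \eqref{def:weak_exchange_axiom} inside $S$. If $\tau\neq\tau'$, pick $k$ with $\tau_k<\tau'_k$. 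The axiom gives $l$ with $\tau_l>\tau'_l$ such that $\tau+e_k-e_l\in S$, and this new vector is strictly closer to $\tau'$. This produces a chain
\[
    \tau=\tau^{(0)},\tau^{(1)},\dots,\tau^{(m)}=\tau',
\]
in which consecutive vectors differ by a single elementary exchange $e_k-e_l$.

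Next, I apply Theorem~\ref{theorem:representations_projective_equivalence} to each consecutive pair. This gives scalars $\lambda_t\in F^\times$ with
\[
    \RetK^{\tau^{(t-1)}}(f)=\lambda_t\,\RetK^{\tau^{(t)}}(f).
\]
Composing these along the chain gives
\[
    \RetK^\tau(f)=\Bigl(\prod_t\lambda_t\Bigr)\RetK^{\tau'}(f).
\]
The product lies in $F^\times$ because $F^\times$ is a group. This is exactly projective equivalence in the sense of Definition~\ref{def:projective_equivalence}.

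A few minor points need checking. Multiplication of an element of $\mathcal{E}_F(w)$ by a scalar acts coefficientwise, so iterated scalings compose as products in the commutative monoid $F$. Definition~\ref{def:projective_equivalence} applies because every nonzero element of a tract is a unit. Both retractions are genuine representations of $\RetK(J)$ by Corollary~\ref{corollary:section_retraction_representation}, although only their coefficients matter here.

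There is no real obstacle, since the substantive work is already done in the elementary-exchange case. The only step needing care is the connectivity of the exchange graph of $S$. I handle this directly with the distance-decreasing argument above rather than citing it abstractly.
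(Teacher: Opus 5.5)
Your proposal is correct and follows the paper's proof: M-convexity of the set of section-defining vectors (Lemma~\ref{lemma:collection_of_sections_is_M-convex}), a path of elementary exchanges between $\tau$ and $\tau'$, Theorem~\ref{theorem:representations_projective_equivalence} applied to each step, and multiplication of the resulting scalars in the group $F^\times$. The only difference is that you prove connectivity of the exchange graph directly, by using \eqref{def:weak_exchange_axiom} to decrease $\sum_i|\tau_i-\tau'_i|$, rather than citing it.
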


\begin{proof}
    By Lemma~\ref{lemma:collection_of_sections_is_M-convex}, the set of section-defining vectors is M-convex. Hence its basis-exchange graph is connected. Applying Theorem~\ref{theorem:representations_projective_equivalence} along a path from $\tau$ to $\tau'$ gives the result.
\end{proof}

The preceding corollary shows that, for representations over near-idempotent tracts, the caged retraction is well-defined up to projective equivalence. Equivalently, choosing a section of $\kproj\colon\Jmax\to\RetK(J)$ gives a representation of $\RetK(J)$, and different choices give projectively equivalent representations.

\begin{remark}
    Since $\RetK^\tau(f)$ is obtained from $f$ by an embedded minor operation, the construction is natural in the sense of \cite{baker2025representationtheorypolymatroids}. In particular, it induces the corresponding map on foundations. The projective-equivalence result above says in addition that the induced representation of $\RetK(J)$ is independent of the choice of section up to projective equivalence.
\end{remark}

\printbibliography
\end{document}